\newtheorem{thm}{Theorem}[section] 
\newtheorem{lem}[thm]{Lemma} 
\newtheorem{prop}[thm]{Proposition}
\theoremstyle{remark}
\newtheorem{rem}[thm]{Remark} 
\theoremstyle{definition}
\newtheorem{defin}[thm]{Definition} 
\newtheorem{example}[thm]{Example} 
\newcommand{\Wloc}{W_{\mbox{\tiny loc}}}
\newcommand{\inner}{\partial^{\,\mbox{\tiny inner}}}
\newcommand{\side}{\partial^{\,\mbox{\tiny side}}}
 \title[Heat kernel estimates]{Heat kernel estimates on manifolds with ends  with mixed boundary condition}
\author{Emily Dautenhahn}
\thanks{Partially supported by NSF grants DMS-1645643 and DMS-2054593.}
\address{Department of Mathematics and Statistics, Murray State University}
\author{Laurent Saloff-Coste}
\thanks{Partially supported by NSF grants DMS-1707589, DMS-2054593, and DMS- 234386.}
\address{Department of Mathematics, Cornell University}
\subjclass[2020]{Primary 58J35, 60J65; Secondary 58J65, 31B05}
\keywords{heat kernel, mixed boundary condition, manifolds with ends}
\begin{document}

\begin{abstract}
    We obtain two-sided heat kernel estimates for Riemannian manifolds with ends with mixed boundary condition, provided that the heat kernels for the ends are well understood. These results extend previous results of Grigor'yan and Saloff-Coste by allowing for Dirichlet (or mixed) boundary condition. The proof requires the construction of a global harmonic function which is then used in the $h$-transform technique. 
\end{abstract}

\maketitle

\section{Introduction} \subsection{Motivation} In \cite{GS1}, Alexander Grigor'yan and the second author initiated the study of two-sided heat kernel estimates on weighted complete  Riemannian manifolds with finitely many nice ends, $M=M_1\# \cdots \#M_k$. The components  $M_i$ of this connected sum are, themselves,  assumed to be weighted complete Riemannian manifolds. The main assumption is that, on each $M_i$,  the heat kernel $p_{M_i}(t,x,y)$,  is well understood in the sense that it satisfies a classical-looking two-sided Gaussian estimate, uniformly at all times and locations. Equivalently (\cite{Gri,PSHDuke,Asp}),  the volume functions of these manifolds, $M_i$, $1\le i\le k$, are uniformly doubling at all scales and locations \textbf{and} their geodesic balls satisfy a Neumann-type Poincar\'e inequality, uniformly at all all scales and locations.  These are very strong hypotheses, and, in certain cases, additional more technical hypotheses are needed. The results of \cite{GS1} are sharp two-sided estimates on the heat kernel of $M.$ The most basic case illustrating these results is when
$M_i=\mathbb R^N$ for some $N,$ and, more generally, $M_i= \mathbb R^{n_i}\times \mathbb S^{N-n_i}$, for some $N$ and $n_i$, $1\le n_i\le N$.
These basic cases were new and already plenty challenging at the time \cite{GS1} was published. They are richer than they appear if one takes into consideration the variation afforded by the weight functions.  In addition, the results hold without change when the term ``complete Riemannian manifold''
is interpreted in the context of manifolds with boundary. Complete, then, means metrically complete, and the heat equations and heat kernels on $M$ and on the $M_i$, $1\le i\le k$, are all taken with Neumann boundary condition. So, for instance, the results of \cite{GS1,GS5}
 cover the  solid three-dimensional body in Figure \ref{fig1}. (This figure created by A. Grigor'yan appears in \cite{GS5}.)

\begin{figure}
  \includegraphics[width=0.3\linewidth]{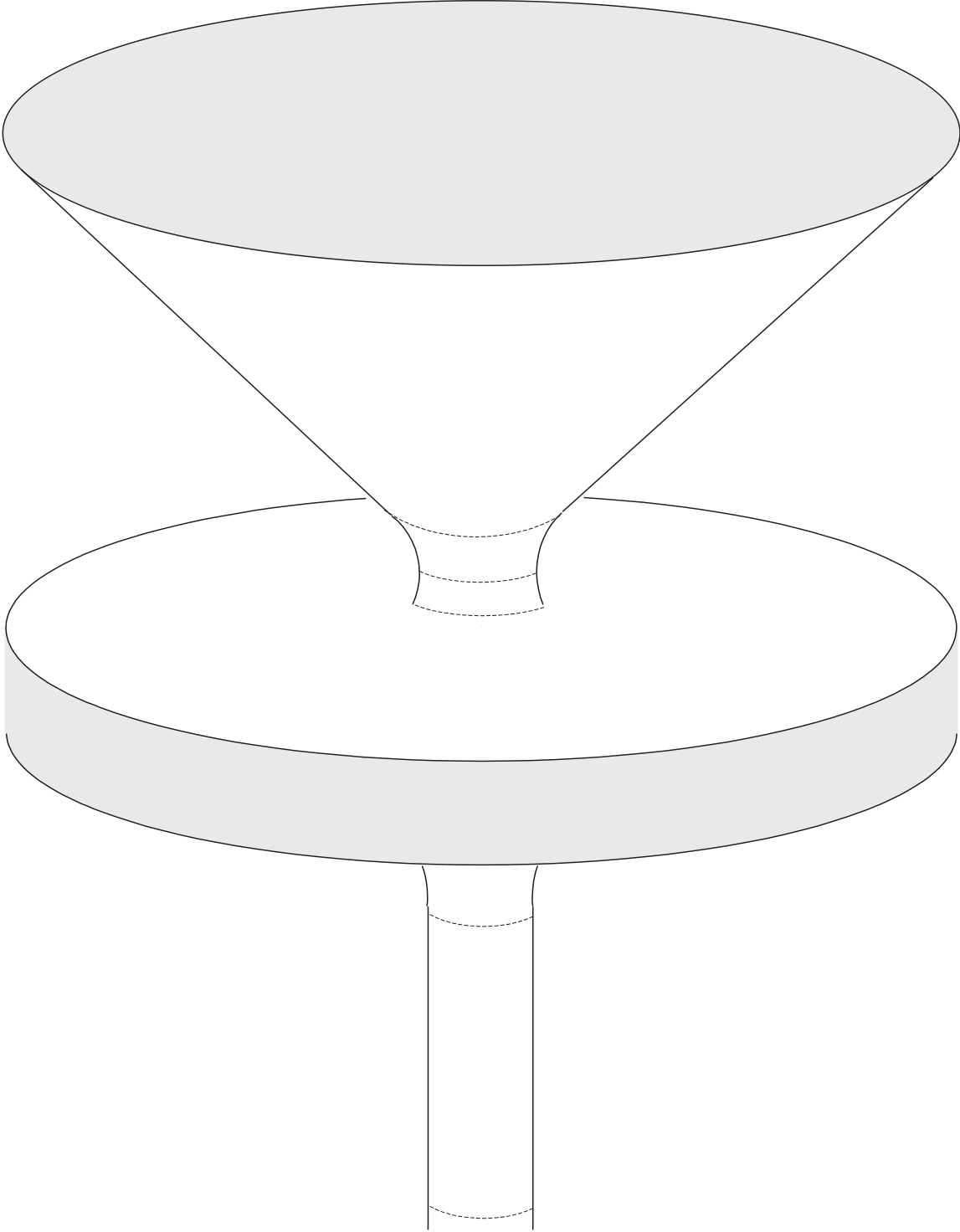}
  \caption{A solid subset of $\mathbb R^3$: a complete manifold with boundary.}
  \label{fig1}
\end{figure}

The aim of the present work is to initiate the study of the case when the heat equation on the complete manifold $M$ (with boundary) above is taken with mixed boundary condition: Neumann on some part of the boundary and Dirichlet on the rest of the boundary. (Of course, restrictive assumptions will be made on the nature of the set on which Dirichlet boundary condition holds.) Here, as usual,  Neumann boundary condition refers to the requirement that the normal derivative of the solution vanishes at the boundary, whereas Dirichlet boundary condition refers to the vanishing of the solution itself at the boundary. Even the simplest possible instances of this problem, such as the planar domain depicted in Figure \ref{fig:cones1}, present interesting challenges.

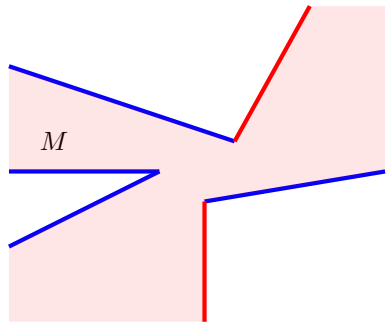
\begin{figure}\begin{center}\begin{tikzpicture}[scale=.2] 
\draw[blue,ultra thick] (0,5)--(10,5) ; \draw[blue,ultra thick] (0,0) -- (10,5);
\draw[blue, ultra thick] (0,12)--(15,7); \draw[red, ultra thick] (15,7)--(20,16); \node at ((3,7) {$M$};

\draw[blue, ultra thick] (25,5)--(13,3); \draw[red, ultra thick] (13,3)--(13,-5) ;
\path [fill, red, opacity=.1]  (0,5)--(10,5) --(0,0) --(0,-5)--(13,-5)--(13,3)--(25,5)--(25,16)--(20,16)--(15,7)--(0,12)--(0,5);

\end{tikzpicture}\end{center}\caption{Sketch of a planar, unbounded, complete manifold $M$ (light red) with boundary $\delta M$ (blue and dark red)  with three conic ends. Dirichlet boundary is depicted in blue, Neumann boundary in red. Corners should be rounded so that $M$ is really a (smooth) manifold with boundary, though actually it does not matter; see Appendix \ref{corners}.}\label{fig:cones1}\end{figure}

\begin{figure}\begin{center}\begin{tikzpicture}[scale=.2] 
\draw[red,ultra thick] (0,5)--(10,5) ;\draw[red,ultra thick] (0,0) -- (10,5);
\draw[blue, ultra thick] (0,12)--(15,7); \draw[red, ultra thick] (15,7)--(20,16); \node at ((3,7) {$M$};

\draw[blue, ultra thick] (25,5)--(13,3); \draw[red, ultra thick] (13,3)--(13,-5) ;
\path [fill, red, opacity=.1]  (0,5)--(10,5) --(0,0) --(0,-5)--(13,-5)--(13,3)--(25,5)--(25,16)--(20,16)--(15,7)--(0,12)--(0,5);
\end{tikzpicture}\end{center}\caption{Same $M$ as in Figure \ref{fig:cones1}, but with different boundary conditions.}\label{fig:cones2}\end{figure}
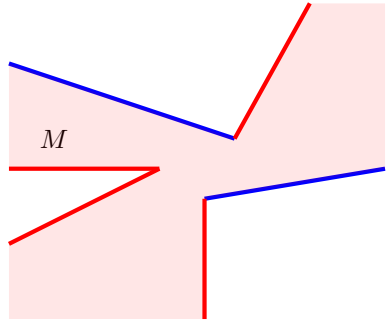

Describing the behavior of the heat kernel in the domain depicted in Figures \ref{fig:cones1} and \ref{fig:cones2} (with the given boundary conditions) will require the introduction of a fair bit of notation. Ultimately, in our main result of Theorem \ref{main_thm}, we give upper and lower bounds, valid for all time $t>0$ and pairs $(x,y)\in M$,  which are essentially ``matching bounds" in the sense used  widely in the literature on heat kernel bounds. For the purpose of this introduction, we focus on the following particular case: Fix a point $o$ in $M$ not on the Dirichlet boundary. What is the behavior of $p(t,o,o)$ as $t$ tends to infinity when $M$ is the domain depicted in Figure \ref{fig:cones1} with the given boundary conditions?

To answer this question, starting from upper left and continuing counter-clockwise, denote by $M_1 ,M_2, M_3$ the three cones whose connected sum is $M$.  Note that $M_1$ carries Dirichlet boundary condition on both sides whereas  $M_2$  and $M_3$ carry Dirichlet boundary condition on one side and Neumann on the other.
%be sure reader is not expected to know this already
Let $\alpha_i$ be the apertures of $M_i$, $1\le i\le 3$. We will show that, because each $\alpha_i$ is positive,  there are constants $0<c_o\le C_o<+\infty$ such that, for all $t>1$,
\[ c_ot^{-a} \le  p(t,o,o)\le C_ot^{-a} \]
with 
\[a= 1+\min\left\{\frac{\pi}{\alpha_1},\frac{\pi}{2\alpha_2},\frac{\pi}{2\alpha_3}\right\}.\] 

Now consider the case where there is a cone of positive aperture carrying Neumann boundary condition on both sides and (at least) one other cone carrying Dirichlet boundary condition on at least one side as in Figure \ref{fig:cones2}. We will show that this situation leads to the behavior
 \[ c_o(t\log^2 t)^{-1} \le  p(t,o,o)\le C_o(t\log^2 t)^{-1}.\]

To give yet another variation, consider the domain depicted in Figure \ref{fig:cones3}, which has an end that is a cone of aperture zero with Neumann condition on both sides.

\begin{figure}[h]\begin{center}\begin{tikzpicture}[scale=.2] 
\draw[red,ultra thick] (10,-5) -- (10,8); \draw[red, ultra thick] (13,6)--(13,-5) ;
\draw[red, ultra thick] (0,12)--(25,12);
\node at (3,7) {$M$};
\draw[blue, ultra thick] (13, 6)--(25, 0);
\draw[blue, ultra thick] (10, 8)--(0, -3);
\path[fill, red, opacity=.1] (0,-3)--(10,8)--(10,-5)--(13,-5)--(13,6)--(25,0)--(25, 12)--(0,12)--(0,-3);

\end{tikzpicture}\end{center}\caption{An $M$ with an end that is a cone of aperture zero.}\label{fig:cones3}\end{figure}

In this case,
\[ c_ot^{-3/2} \le  p(t,o,o)\le C_o t^{-3/2} .\]
The case of Dirichlet boundary condition along at least one side of a cone of aperture zero cannot be treated by the techniques of this paper.

\subsection{Description of the Method} These results will be obtained via a general method based on a combination of the basic ideas developed in \cite{GS3,GS2,GS4,GS5,GS6} and \cite{GyS} (the results in \cite{GS5} make heavy use of those in \cite{GS3,GS2,GS4,GS6}).
Reference \cite{GS5} provides a very general line of attack to reconstruct heat kernel estimates on  a connected sum from heat kernel estimates on and related knowledge of the parts forming that sum. Reference \cite{GyS} provides the ideas that make the technique of \cite{GS5} applicable to the case when Dirichlet boundary condition is present. Namely, after an appropriate $h$-transform (also known as Doob's transform after Joseph Doob), the Dirichlet condition disappears, and one can apply the technique of \cite{GS5} straightforwardly, even though the set-up is not quite that of \cite{GS5}. (Appendix \ref{Harnack} contains the relevant adaptations of the results from \cite{GS5}.) Further connections with earlier results are described in Appendix \ref{earlier_results}.

The layout of the paper is as follows. Section \ref{notation} introduces the specific objects and setting under consideration. Section \ref{construct} constructs a harmonic function with special properties to be used as the key function $h$ in the $h$-transform technique. Section \ref{prove_thm} then implements the $h$-transform technique to obtain the desired heat kernel estimates, and Section \ref{exs} applies the main theorem, Theorem \ref{main_thm}, to various examples. The paper concludes with several appendices, which deal with a slightly more general hypothesis than that found in the main portion of the paper and extend the main result to manifolds with simple corners. The appendices also serve to remind the reader of useful definitions and constructions. Furthermore, they contain restatements of several previous results that are crucial to the present paper. As such the reader is frequently referred to the relevant appendix. 

\section{Set-up and notation}\label{notation}

\subsection{The underlying complete manifold \texorpdfstring{$M$}{M}} We start with a smooth manifold with boundary, $(M,\delta M)$, equipped with a Riemannian structure $g$ and a positive, smooth weight $\sigma:M\to (0,+\infty)$.  It will sometimes be useful to set $M^\bullet=M\setminus\delta M$.
We let $d$ be the geodesic distance on $ (M,g)$ and assume that $(M,d)$ is a complete metric space. We call $M$ a weighted, complete Riemannian manifold with boundary (by definition, a manifold is connected).  Hence $M$ comes equipped with a number of additional objects we briefly describe.
\begin{itemize}
\item The Riemannian measure is denoted by $dx,$ and its weighted version $\mu$ is given by $\mu(dx)=\sigma(x) dx.$ We view $(M,d,\mu)$ as our main metric measure space. 
\item Geodesic balls in $M$, which are denoted by $B_M(x,r)$, $x\in M$, $r>0$. The $\mu$ volume of $B_M(x,r)$ is $V(x,r):=\mu(B_M(x,r)).$
\item  The gradient  $\nabla f$ defined on smooth functions by 
\[df|_x(X)=g_x(\nabla f (x), X)\]
for any tangent vector $X$ at $x\in M$.
\item The divergence $\mbox{div} X =\mbox{div}_\mu X$ defined on smooth vector fields by 
\[ \int_M \mbox{div}(X)  f \, d\mu=-\int_M g(X,\nabla f) \, d\mu\] for any smooth compactly supported function $f$ on $M$.
\item The Laplace operator $\Delta=\Delta_{\mu}$ defined on smooth functions on $M^\bullet=M\setminus \delta M$ by $\Delta f=\mbox{div} (\nabla f)$. 
\end{itemize}

\begin{defin}[The Sobolev space $W^1_0(V)$]\label{localSobolev}  The (local) Sobolev space  $\Wloc(M^\bullet)$ is the space of distributions on $M^\bullet$ which can be represented locally by an $L^2$ function and whose first partial derivatives in any precompact local chart of $M^\bullet$ can also be represented by $L^2$ functions. For any open set $U^\bullet \subset M^\bullet,$ we may define $\Wloc(U^\bullet)$ in the same way by replacing $M^\bullet$ with $U^\bullet.$  For any open subset $V\subset M$, the Sobolev space $W_0(V)=W^1_0(V)$ is the subspace of $L^2(V)=L^2(V,\mu|_V)$ obtained by closing the space of smooth compactly supported functions on $V$, $\mathcal C_c^\infty(V)$, under the norm $\left(\int_V|f|^2 d\mu+\int_V|\nabla f|^2 d\mu\right)^{1/2}$. 
\end{defin}

\begin{defin}[Heat equation on $M$] The heat semigroup $P^M_t$ 
is the semigroup associated with the Dirichlet form $(W_0(M),\int_Mg(\nabla f,\nabla f) d\mu)$. It is given on $L^2(M)$  by
\[P^M_t f(x)=\int_M p_M(t,x,y)f(y) \, d\mu(y), \;\;t>0, \ x\in M,\]
where the heat kernel $p_M$, viewed as a function of $t$ and $x$, satisfies the heat equation $(\partial_t-\Delta)p(t,x,y)=0$ on $M\setminus \delta M$ with Neumann boundary condition along the boundary $\delta M$ and the initial condition $p_M(0,x,\cdot)=\delta_x(\cdot)$ 
(when the  distribution/smooth function pairing is given by the extension of  $\langle \phi,\psi\rangle = \int \phi\psi \, d\mu$). The infinitesimal generator associated with this Dirichlet form will be referred to as $\Delta_M.$ 
\end{defin}

\subsection{Our main objects of study}
The complete manifold $M$ and its heat kernel are not the main objects of interest in the present work. Instead, we consider an open subset $\Omega$ of $M$  such that  the closed set $M\setminus\Omega$ is a subset of $\delta M$. Hence, the topological boundary of $\Omega$ in $M$ is $\partial \Omega=M\setminus \Omega$.

\begin{figure}[h]\begin{center}\begin{tikzpicture}[scale=.15] 
\draw[red,ultra thick] (0,5)--(10,5) ; \draw[red,ultra thick] (0,0) -- (10,5);
\draw[red, ultra thick] (0,12)--(15,7); \draw[red, ultra thick] (15,7)--(20,16); \node at ((3,7) {$M$};

\draw[red, ultra thick] (25,5)--(13,3); \draw[red, ultra thick] (13,3)--(13,-5) ;
\path [fill, red, opacity=.4]  (0,5)--(10,5) --(0,0) --(0,-5)--(13,-5)--(13,3)--(25,5)--(25,16)--(20,16)--(15,7)--(0,12)--(0,5);

\draw[blue,ultra thick] (35,5)--(45,5) ; \draw[blue,ultra thick] (35,0) -- (45,5);
\draw[blue, ultra thick] (35,12)--(50,7); \draw[red, ultra thick] (50,7)--(55,16); \node at ((38,7) {$\Omega$};

\draw[blue, ultra thick] (60,5)--(48,3); \draw[red, ultra thick] (48,3)--(48,-5) ;
\path [fill, red, opacity=.1]  (35,5)--(45,5) --(35,0) --(35,-5)--(48,-5)--(48,3)--(60,5)--(60,16)--(55,16)--(50,7)--(35,12)--(35,5); 

\end{tikzpicture}\end{center}\caption{Sketch (corners should be rounded) of the complete manifold $M$ (dark red)  and its submanifold $\Omega$ (light red and red boundary) with ``Dirichlet boundary'' $\partial \Omega\subseteq \delta M$, not part of $\Omega$, highlighted in blue.}\label{fig:cones4}\end{figure}
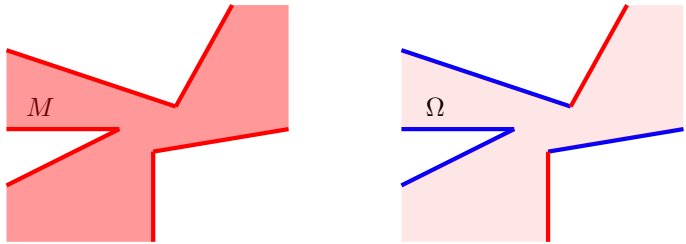

We can view $\Omega$ as a manifold with boundary $\delta\Omega=\delta M\cap \Omega,$ but it is not metrically complete if $\partial\Omega \neq \emptyset$. The metric completion of $\Omega$ is (isometric to) $M$.  We will use the following notation:

\begin{itemize}
\item Geodesic balls in $\Omega$ are denoted by $B(x,r)=B_\Omega(x,r)$. The $\mu$-volume of $B(x,r)$ is $V(x,r)=\mu(B(x,r))=\mu(B_M(x,r))$. By abuse 
of language and notation, if $x\in \partial \Omega$, we write $B_\Omega(x,r)=B_M(x,r)\cap \Omega$. 
\item The heat semigroup $P_t=P^\Omega_t$ and its kernel $p(t,x,y)=p_\Omega(t,x,y),\linebreak (t,x,y)\in (0,+\infty)\times \Omega\times \Omega$, are related on $L^2(\Omega)$  by
\[P_t f(x)=\int_\Omega p (t,x,y)f(y)\, d\mu(y), \;\;t>0, \ x\in M,\]
and are associated with the Dirichlet form $(W_0(\Omega),\int_\Omega g(\nabla f,\nabla f)d\mu),$ which has infinitesimal generator $\Delta_\Omega.$ By definition, the heat kernel $p=p_\Omega$, viewed as a function of $t$ and $x,$ satisfies the heat equation $(\partial_t-\Delta)p(t,x,y)=0$ on $\Omega\setminus \delta \Omega$ with Neumann boundary condition along the boundary $\delta \Omega$ and Dirichlet boundary condition (in the weak sense) along $\partial \Omega$. 
\end{itemize}

\noindent \emph{Condition (*):} Throughout, we make the simplifying assumptions that the closed set $\partial \Omega \subseteq \delta M$ has countably many connected components, each of which is a smooth codimension $1$ manifold with boundary, and that any point in $M$ has a neighborhood in $M$ containing at most finitely many connected components of $\partial \Omega.$

\subsection{Finitely many nice ends}\label{nice-ends}

We now describe the main additional hypotheses we make on the global geometric structure of $M$ (and hence $\Omega$). Namely, we assume that $M$ is the connected sum of $k$ complete Riemannian manifolds with boundary \linebreak $(M_1,\delta M_1),\dots,(M_k,\delta M_k)$, which we write as
\[M=M_1\# M_2\#\cdots\#M_k.\]
With $\sqcup$ denoting disjoint union, this means that 
$$M= K \sqcup \left(E_1\sqcup \dots \sqcup E_k\right), $$
where $K$ is a compact subset of $M$ with the property that $M\setminus K$ has $k$ connected components $E_1,\dots, E_k$ and each $E_i$ is isometric to a connected subset of $M_i$ with compact complement $K_i$ (hence, $M_i=K_i\sqcup E_i$). The explicit decomposition 
$M= K \sqcup \left(E_1\sqcup \dots\sqcup E_k\right) $ is, of course, not unique, and we will assume this decomposition possesses additional nice properties. We assume that the metric closure of each $E_i$ is, itself,  a manifold with boundary. This is a somewhat constraining hypothesis, but it has the advantage of simplifying exposition by restricting our attention to smooth manifolds with boundary. The weight $\sigma$ on $M$ is assumed to be compatible with a weight $\sigma_i$ on  each $M_i$ in the sense that $\sigma|_{E_i}=\sigma_i$. 

Next, we consider an open set  $\Omega \subset M$ with $M\setminus \Omega\subseteq \delta M$ and set
$$U_i= \Omega\cap E_i, \;\;i=1,\dots,k.$$
The open sets $U_i$ are important to us. Each $U_i$ is  a  weighted Riemannian manifold with boundary $\delta U_i=\delta M\cap U_i$ and whose topological boundary in $M$, denoted by $\partial U_i$, is the union of its ``lateral" boundary or ``side" boundary
$\side U_i=\overline{E_i}\cap \partial \Omega$ and its ``inner"  boundary  $\inner U_i=\partial E_i$. The inner boundary $\inner U_i=\partial E_i$ is also a subset of $K$. It is compact with finitely many connected components, which are co-dimension $1$ submanifolds with boundary. The union  $\side U_i \cup  \inner U_i$ is not necessarily disjoint, but the intersection $\side U_i \cap  \inner U_i$ is of co-dimension at least $2$.
We make the following strong hypotheses:

\begin{itemize} 
\item
[(H1)]Each $(M_i,\sigma_i)$ is a Harnack manifold (Definition \ref{HarnackManifold}). Equivalently, each $M_i$ is doubling and the Poincar\'e inequality holds, both uniformly (see Definitions \ref{VD} and \ref{PI}).
\item[(H2)] Each $U_i$ is uniform in $M_i$ (Definition \ref{uniform}).
\end{itemize}
We now collect a list of important known consequences of these hypotheses for future reference.
\begin{itemize}
\item [(C1)] The condition that each $U_i$ is uniform in $M_i$ implies that $U_i$ satisfies (RCA) from Definition \ref{RCA}. This can be seen directly from the definitions. 
\item [(C2)] The condition that each $U_i$ is uniform in the Harnack manifold $M_i$ implies that the elliptic boundary Harnack inequality holds uniformly in $U_i$ (Definition \ref{EBHI}). Further details on the elliptic boundary Harnack inequality and situations in which it holds can be found in \cite{BM, GyS, JLLSC1} and the references therein.
\item [(C3)]The condition that each $U_i$ is uniform in the Harnack manifold $M_i$ implies that $U_i$ admits a harmonic profile $u_i$, that is, a positive harmonic function vanishing along $\partial U_i$ (see Definition \ref{def-vanish}). This profile is unique up to a positive multiplicative constant. This follows, e.g., from Theorem 4.1 of \cite{GyS}. (See also \cite{LiTam} and references therein for discussion regarding existence of harmonic functions on ends on complete Riemannian manifolds without boundary relating to curvature conditions.) 

\item[(C4)] The weighted Riemannian manifold $(U_i, \sigma u_i^2)$ is a Harnack manifold. This is given by Theorem 5.9 of \cite{GyS}.
\end{itemize}

\section{Construction of a Profile for \texorpdfstring{$\Omega$}{Omega}}\label{construct}

\subsection{Harmonic Profiles for \texorpdfstring{$\Omega$}{Omega}}

Throughout this section, we assume all hypotheses given in Section \ref{nice-ends}. In order to use the technique of \cite{GS5}, we need to apply an appropriate $h$-transform. The effect of this will be to ``hide" the Dirichlet boundary and take us to the setting of a connected sum of Harnack manifolds. The goal of this section is to construct a positive global harmonic function in $\Omega$ (Definition \ref{def-globalharm}) that grows at least as fast in each end as the profile for that end; we may refer to this function as a profile for $\Omega.$ While the profiles for the ends $U_i, \ 1 \leq i \leq k,$ are unique up to constant multiples (see (C3)), this is in general not the case for $\Omega,$ even with the additional restriction on the growth of the function. Our main result in this section is that $\Omega$ always possesses a harmonic function of this type.

\begin{thm}\label{thm-profile}
Assuming $\partial \Omega \not = \emptyset,$ there exists a positive harmonic function $h$ on $\Omega,$ vanishing along $\partial \Omega,$ such that $h \geq cu_i$ for some constant $0<c < +\infty,$ where $u_i$ denotes the profile for $U_i,\ 1 \leq i \leq k,$ as in (C3). 
\end{thm}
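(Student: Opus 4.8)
The plan is to build $h$ as an increasing limit of solutions of mixed boundary value problems on an exhaustion of $\Omega$, with the profiles $u_i$ prescribed on the ``artificial'' part of the boundary, and then to read off a global profile from this sequence. Fix a large geodesic ball $B$ with $K\subset B$, and let $\Omega_1\Subset\Omega_2\Subset\cdots$ exhaust $\Omega$ by precompact open sets (for instance suitably smoothed intersections of $\Omega$ with the balls $nB$; the Dirichlet--Neumann junctions between $\partial\Omega$, $\delta\Omega$ and the artificial boundary are of the type handled in Appendix \ref{corners}, and the artificial boundary may be arranged to avoid the inner boundaries $\inner U_i$). For $n$ large, $\partial\Omega_n$ decomposes into a Dirichlet part contained in $\partial\Omega$, a Neumann part contained in $\delta\Omega$, and an artificial part $\Sigma_n$ with $\Sigma_n\subset\bigsqcup_i\overline{E_i}$. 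Let $h_n$ be the unique solution of the mixed problem on $\Omega_n$ that is harmonic in $\Omega_n^\bullet$, vanishes on the Dirichlet part, is Neumann on $\delta\Omega\cap\overline{\Omega_n}$, and equals $u_i$ on $\Sigma_n\cap\overline{E_i}$. By the strong maximum principle $h_n>0$ in the interior and on the Neumann boundary.

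Two comparisons follow from the maximum principle for the mixed problem (the Hopf lemma taking care of the Neumann part). First, on $U_i\cap\Omega_n$ the functions $h_n$ and $u_i$ are both harmonic with the Neumann condition on $\delta U_i$, and on the rest of $\partial(U_i\cap\Omega_n)$ one has $h_n\ge u_i$: they coincide on $\Sigma_n\cap\overline{E_i}$ (choice of data) and on $\side U_i$ (both vanish), while $h_n\ge 0=u_i$ on $\inner U_i$ (as $u_i$ vanishes along $\partial U_i$). Hence $h_n\ge u_i$ on $U_i\cap\Omega_n$. Second, comparing $h_{n+1}$ and $h_n$ on $\Omega_n$: on the Dirichlet part both vanish, on $\Sigma_n\cap\overline{E_i}$ the first comparison at level $n+1$ gives $h_{n+1}\ge u_i=h_n$, and both are Neumann on the Neumann part; so $h_{n+1}\ge h_n$ on $\Omega_n$. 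Thus $(h_n)$ is nondecreasing and dominates $u_i$ on each truncated end.

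The main obstacle is a uniform (in $n$) upper bound on $h_n$ on compact subsets of $\Omega^\bullet\cup\delta\Omega$. For each $i$ fix $o_i\in U_i\setminus 2B$. On the truncated end $U_i\setminus B$ both $h_n$ (for $n$ large) and $u_i$ are positive harmonic functions vanishing along the Dirichlet boundary $\side U_i$ located there. Since $U_i$ is a uniform domain in the one-ended Harnack manifold $M_i$ and carries the essentially unique profile $u_i$, the elliptic boundary Harnack inequality (C2), propagated outward along the Harnack chains provided by (RCA) (C1), yields a constant $A_i$, independent of $n$, with
\[
A_i^{-1}\,\frac{h_n(o_i)}{u_i(o_i)}\,u_i \;\le\; h_n \;\le\; A_i\,\frac{h_n(o_i)}{u_i(o_i)}\,u_i \qquad\text{on } U_i\setminus 2B .
\]
Evaluating at a point of $\Sigma_n\cap\overline{E_i}$, where $h_n=u_i$, forces $h_n(o_i)\le A_i\,u_i(o_i)$ for all $n$, hence $h_n\le A_i^2\,u_i$ on $U_i\setminus 2B$; since $u_i$ is bounded on compact subsets of $U_i$, and on $2B\cap\Omega$ the maximum principle bounds $h_n$ by its supremum over $\partial(2B)\cap\overline\Omega$, it follows that $h_n$ is bounded, uniformly in $n$, on every compact subset of $\Omega^\bullet\cup\delta\Omega$. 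This is the delicate step: here the hypotheses (H1)--(H2) are used in full strength, through (C1)--(C3) and the one-endedness of each $M_i$ (which lets the boundary Harnack comparison be carried all the way out the end), and it is the part of the argument that leans most heavily on the techniques of \cite{GyS} (in an end carrying no Dirichlet boundary one instead uses interior Harnack and the doubling of the profile, as in \cite{GS5}). An alternative would be to realize $h$ as a sum of Martin kernels of $\Omega$ at the ends, where finiteness is automatic, but the lower bound still requires the same comparison.

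Finally, by monotonicity together with the uniform bound, $h:=\lim_n h_n$ is finite and locally bounded on $\Omega^\bullet\cup\delta\Omega$; as an increasing, locally bounded limit of functions harmonic in the interior and Neumann along $\delta\Omega$, $h$ is harmonic in $\Omega^\bullet$ with the Neumann condition on $\delta\Omega$ (Harnack's convergence theorem with interior and boundary elliptic estimates). By the first comparison $h\ge u_i>0$ on each $U_i$, so $h$ is positive and satisfies $h\ge c\,u_i$ with $c=1$. Near $\side U_i$ the same boundary Harnack comparison bounds $h$ by a multiple of $u_i$, which vanishes along $\partial U_i\supseteq\side U_i$; equivalently, each $h_n$ vanishes on $\partial\Omega\cap\overline{\Omega_n}$ and the family is uniformly controlled up to $\partial\Omega$, so $h$ vanishes along $\partial\Omega$ in the sense of Definition \ref{def-vanish}. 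This produces the desired global profile $h$, and here the hypothesis $\partial\Omega\neq\emptyset$ is used to guarantee that each end carrying Dirichlet data contributes the required decay.
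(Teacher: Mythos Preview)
Your approach is genuinely different from the paper's. The paper constructs $h$ directly by the explicit formula $h=u\psi+G(\Delta(u\psi))$ (following \cite{STW}), where $G$ is the Green function of $\Omega$, and then analyzes the error term $G(\Delta(u\psi))$ end by end using a carefully proved dichotomy (Theorem~\ref{green-ends}): either $G_{U_i}(\cdot,y)\to 0$ at infinity (when $M_i$ is non-parabolic), or $u_i$ blows up along a sequence $x_R$ (when $M_i$ is parabolic). In both cases one deduces $h\sim u_i$ at infinity in a sense strong enough to run the maximum principle arguments. Your exhaustion argument is a reasonable alternative route, and the monotonicity and the lower bound $h_n\ge u_i$ are correct.

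The gap is in the uniform upper bound. You assert that boundary Harnack (C2) together with (RCA) yields
\[
A_i^{-1}\,\frac{h_n(o_i)}{u_i(o_i)}\,u_i \;\le\; h_n \;\le\; A_i\,\frac{h_n(o_i)}{u_i(o_i)}\,u_i \qquad\text{on all of } U_i\setminus 2B,
\]
with $A_i$ independent of $n$. This does not follow from chaining local boundary Harnack comparisons. The scale-invariant BHI gives $h_n/u_i\approx c_R$ on each annulus $A_R$ with a fixed multiplicative constant $C$, but it says nothing about how $c_R$ and $c_{2R}$ compare; chaining across the $\sim\log n$ dyadic annuli between $o_i$ and $\Sigma_n$ produces a constant $C^{\log n}$, not a uniform one. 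Indeed a global comparison of this kind is simply false for arbitrary positive harmonic functions vanishing on $\side U_i$: already in the upper half-plane, $y$ and $y/(x^2+y^2)$ both vanish on $\{y=0\}$ but are not globally comparable. The Neumann-end case you defer to ``interior Harnack and the doubling of the profile, as in \cite{GS5}'' is no better: interior Harnack controls the oscillation of each function separately, not their ratio, and \cite{GS5} does not supply such a comparison.

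What is really needed to bound $h_n(o_i)$ is precisely the Green-function information the paper isolates in Theorem~\ref{green-ends} (and Lemma~\ref{green-ends-global}): writing $h_n(x)=u_i(x)+\int_{\inner U_i} h_n\,dP_n(x,\cdot)$ on $U_i\cap\Omega_n$, the second term is controlled by the harmonic measure of $\inner U_i$ seen from $x$, whose decay (or the compensating growth of $u_i$) is exactly what (E1)/(E2) encode. So your exhaustion scheme can be completed, but not by BHI chaining alone; it needs the same Green-function dichotomy that drives the paper's proof. The paper's explicit formula has the advantage of making this dependence transparent from the outset.
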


If $\partial \Omega = \emptyset,$ then $\Omega = M$ is complete and this case is covered by \cite{GS5}, provided $\Omega$ is non-parabolic. In the case of \emph{complete} Riemannian manifolds with ends, see work of Li and Tam \cite{LiTam} and subsequent work of Sung, Tam, and Wang \cite{STW} for discussions of harmonic functions and the relation with certain curvature conditions.

Theorem \ref{thm-profile} is proved by using the profiles $u_i,\ 1 \leq i \leq k,$ to construct a global harmonic function on $\Omega,$  which we then show satisfies all of the desired further properties. The overall construction follows that given in \cite{STW}, but various technicalities arise due to the presence of the Dirichlet boundary $\partial \Omega$. Before giving the proof, we gather some additional consequences of our hypotheses.

\subsection{Behavior of Green Functions}

The proof of the theorem relies heavily on the behavior of the Green function $G$ of $\Omega,$ which exists since $\partial \Omega \not = \emptyset$ (see Appendix \ref{parabolic}). The behavior of $G$ is closely related to the behavior of the Green functions for the ends $U_i, \  G_{U_i},\ 1 \leq i\leq k,$ which exist since all ends $U_i$ are non-parabolic as $\inner U_i \not = \emptyset.$ In turn, what we can say about the behavior of $G_{U_i}$ relies on the strong hypotheses we require of the ends, as well as whether the underlying manifolds $M_i$ are parabolic or non-parabolic. 

\begin{defin} We say that a continuous function $f$ on $\Omega$ tends to zero at infinity in an end $U_i$ if, for all $\varepsilon >0,$ there exists a compact set $K_\varepsilon \subset M$ such that $|f(x)| \leq \varepsilon$ for all points $x \in U_i \setminus K_\varepsilon.$ 

Similarly, we say $f$ tends to zero at infinity in $\Omega$ if, for all $\varepsilon >0,$ there exists a compact set $K_\varepsilon \subset M$ such that $|f(x)| \leq \varepsilon$ for all points $x \in \Omega \setminus K_\varepsilon.$  
\end{defin}

\begin{defin}
Fix points $o_i \in U_i, \ 1 \leq i \leq k.$ (Generally, we think of $o_i$ as being near $\inner U_i).$ We say that $x$ tends to infinity in $U_i$ if the distance between $x$ and $o_i$ (taken in $U_i$) tends to infinity. 
\end{defin}

\begin{thm}\label{green-ends}
The following dichotomy takes place regarding the behaviors of each of the Green functions $G_{U_i},\ 1\leq i \leq k:$ 
\begin{itemize}
\item [(E1)] If $M_i$ is non-parabolic, then $G_{U_i}(x,y) \to 0$ as $x \to \infty$ in $U_i,$ uniformly for all $y$ in a fixed compact set.
\item [(E2)] If $M_i$ is parabolic, there exists an increasing, unbounded function $f$ taking the positive reals to the positive reals such that for all $R>0$ sufficiently large, there exists a point $x_R$ satisfying $R/2 < d(o_i, x_R) < 3R/2$ and $u_i(x_R) \geq f(R).$

Moreover, fix $\delta >0.$ If $x$ (or, equivalently, $y$) is in a fixed compact set, then $G_{U_i}(x,y)$ is bounded above uniformly, provided $d(x,y) \geq \delta >0$.

\end{itemize}
\end{thm}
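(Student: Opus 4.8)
The plan is to exploit the $h$-transform of \cite{GyS} with $h=u_i$ throughout. By (C4) the weighted manifold $\tilde M_i:=(U_i,\sigma_i u_i^2)$ is a Harnack manifold; writing $\tilde p_i,\tilde V_i,\tilde G_i$ for its heat kernel, volume function and Green function, the $h$-transform gives $p_{U_i}(t,x,y)=u_i(x)u_i(y)\tilde p_i(t,x,y)$, hence $G_{U_i}(x,y)=u_i(x)u_i(y)\tilde G_i(x,y)$. Since $\inner U_i\neq\emptyset$, $G_{U_i}$ exists, so $\tilde M_i$ is non-parabolic and, being Harnack, $\tilde G_i(x,y)\asymp\int_{d(x,y)^2}^{\infty}\tilde V_i(x,\sqrt s)^{-1}\,ds$. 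Finally, because $U_i$ is uniform in $M_i$ (H2), its intrinsic distance is comparable to the ambient one, so ``$x\to\infty$ in $U_i$'' simply means $d_{M_i}(o_i,x)\to\infty$, and balls below may be read in either metric.

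For (E1), assume $M_i$ non-parabolic. Passing from $M_i$ (pure Neumann boundary) to the subdomain $U_i$ with Dirichlet condition added along $\partial U_i$ only shrinks the Dirichlet form, so domain monotonicity gives $p_{U_i}\le p_{M_i}$ and hence $G_{U_i}\le G_{M_i}$ on $U_i\times U_i$. A non-parabolic Harnack manifold is non-compact, so its volumes are reverse-doubling; together with non-parabolicity this forces $G_{M_i}(x,y)\asymp\int_{d(x,y)^2}^{\infty}V_i(x,\sqrt s)^{-1}\,ds\to0$ as $d(x,y)\to\infty$, uniformly for $y$ in a fixed compact set (reduce the centre to one base point by doubling). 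Since $x\to\infty$ in $U_i$ forces $d_{M_i}(x,y)\to\infty$, (E1) follows.

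For (E2), assume $M_i$ parabolic and set $g(r):=\sup_{B(o_i,r)\cap U_i}u_i$, finite (interior and boundary regularity of $u_i$, which vanishes on $\partial U_i$) and non-decreasing. If $g$ were bounded by $C$, then $\tilde V_i(o_i,r)=\int_{B(o_i,r)\cap U_i}u_i^2\,d\mu_i\le C^2V_i(o_i,r)$, so $\int^{\infty}\tilde V_i(o_i,\sqrt s)^{-1}\,ds\ge C^{-2}\int^{\infty}V_i(o_i,\sqrt s)^{-1}\,ds=\infty$ by parabolicity of $M_i$, contradicting non-parabolicity of $\tilde M_i$; hence $g(r)\to\infty$. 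The maximum principle (Hopf's lemma on the Neumann part $\delta U_i$, together with $u_i|_{\partial U_i}=0$) shows $\sup_{B(o_i,R)\cap U_i}u_i$ is attained on the sphere $\{d(o_i,\cdot)=R\}\cap U_i$, giving $x_R$ with $d(o_i,x_R)=R$ and $u_i(x_R)=g(R)$; any increasing unbounded minorant $f$ of $g$ then yields the first claim. For boundedness, fix a compact $\mathcal K\subset M$ and $\delta>0$; by symmetry and relative compactness of $\{\delta\le d(x,y)\le R_0\}$, it suffices to bound $G_{U_i}(x,y)$ for $x\in\mathcal K\cap U_i$ and $\rho:=d(o_i,y)$ large. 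Uniformity keeps $d_{U_i}(o_i,x)$ bounded for all $x\in\mathcal K$ (uniform curves have ambient-comparable length, even when $x$ is near $\partial U_i$), so $\tilde V_i(x,r)\asymp\tilde V_i(o_i,r)$ for $r$ past a fixed constant and $d(x,y)\asymp\rho$. Using $u_i(x)\le\sup_{\mathcal K}u_i$, $u_i(y)\le g(\rho)$ (up to $g$-doubling), and the Green estimate for $\tilde M_i$,
\[
G_{U_i}(x,y)=u_i(x)u_i(y)\tilde G_i(x,y)\ \lesssim\ g(\rho)\int_{\rho}^{\infty}\frac{\sigma\,d\sigma}{\tilde V_i(o_i,\sigma)},
\]
so everything reduces to $g(\rho)\int_{\rho}^{\infty}\sigma\,\tilde V_i(o_i,\sigma)^{-1}\,d\sigma\le C$.

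I would prove this last inequality from two ingredients. First, the profile--volume comparison $\tilde V_i(o_i,\sigma)\asymp g(\sigma)^2V_i(o_i,\sigma)$, coming from the Carleson estimate and the corkscrew condition ((C1), (C2), uniformity): the super-level set $\{u_i\gtrsim g(\sigma)\}$ fills a fixed fraction of $B(o_i,\sigma)$. Second --- and this is the heart of the matter --- the quantitative profile-growth bound
\[
g(2\sigma)-g(\sigma)\ \gtrsim\ \frac{\sigma^{2}}{V_i(o_i,\sigma)},\qquad\text{equivalently}\qquad \frac{1}{g(\sigma)}-\frac{1}{g(2\sigma)}\ \gtrsim\ \frac{\sigma^{2}}{\tilde V_i(o_i,\sigma)},
\]
which says that the harmonic profile of a uniform end in a \emph{parabolic} Harnack manifold increases over each dyadic annulus by at least the ``spherical'' amount (a non-empty lateral Dirichlet boundary $\side U_i$ can only accelerate this). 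Granting it, write $\int_{\rho}^{\infty}\sigma\,\tilde V_i(o_i,\sigma)^{-1}\,d\sigma\asymp\sum_{k\ge0}(2^{k}\rho)^{2}\,\tilde V_i(o_i,2^{k}\rho)^{-1}$ by doubling of $\tilde V_i$, and telescope:
\[
\sum_{k\ge0}\frac{(2^{k}\rho)^{2}}{\tilde V_i(o_i,2^{k}\rho)}\ \lesssim\ \sum_{k\ge0}\Big(\frac{1}{g(2^{k}\rho)}-\frac{1}{g(2^{k+1}\rho)}\Big)=\frac{1}{g(\rho)},
\]
using $g(\infty)=\infty$; this gives $g(\rho)\int_{\rho}^{\infty}\sigma\,\tilde V_i(o_i,\sigma)^{-1}\,d\sigma\lesssim1$, hence $G_{U_i}(x,y)\lesssim1$ uniformly, proving (E2). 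The main obstacle is precisely the displayed profile-growth estimate; I expect to derive it from a lower bound on the capacity of the annulus $\{\sigma<d(o_i,\cdot)<2\sigma\}\cap U_i$ by its spherical value, in the spirit of the potential-theoretic lemmas of \cite{GS2,GS3,GyS}.
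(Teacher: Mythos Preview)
Your argument for (E1) is correct and coincides with the paper's: domain monotonicity $G_{U_i}\le G_{M_i}$ followed by the two-sided Green estimate on the non-parabolic Harnack manifold $M_i$.

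For the first assertion in (E2) your contradiction argument is correct and in fact neater than the paper's. The paper instead factorizes $u_i=w_iv_i$, where $w_i$ is the profile of $E_i$ (only the inner boundary carries Dirichlet condition) and $v_i$ is the profile of $U_i$ in the $w_i^2$-weighted space; it then uses the explicit growth $w_i(x)\approx\int_{r_0}^{|x|}s\,V(o_i,s)^{-1}\,ds$ from \cite{GS3} together with a lower bound $v_i(x_R)\ge c$ at corkscrew points. Your route---bounded $u_i$ would force $\tilde V_i\lesssim V_i$, hence parabolicity of $\tilde M_i$, contradicting the existence of $G_{U_i}$---bypasses this factorization entirely.

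The genuine gap is in the second assertion of (E2). Everything reduces, as you say, to the profile-growth estimate
\[
g(2\sigma)-g(\sigma)\ \gtrsim\ \frac{\sigma^{2}}{V_i(o_i,\sigma)},
\]
which you do not prove. For the full profile $u_i$ (with both inner and side Dirichlet boundary) this is not immediate: $u_i$ may grow much faster than the ``spherical'' rate, and while that helps the inequality heuristically, turning the heuristic into a proof requires exactly the capacity/potential machinery you allude to. The paper sidesteps the difficulty with a single extra line of domain monotonicity: since $U_i\subset E_i$ one has $G_{U_i}\le G_{E_i}$, so it suffices to bound $G_{E_i}$. Now the relevant profile is $w_i$, for which \cite{GS3} gives the explicit formula $w_i(x)\approx\int_{r_0}^{|x|}s\,V(o_i,s)^{-1}\,ds$; your ingredient~2 for $w_i$ is then immediate,
\[
w_i(x_{2\sigma})-w_i(x_\sigma)\ \approx\ \int_\sigma^{2\sigma}\frac{s\,ds}{V(o_i,s)}\ \approx\ \frac{\sigma^2}{V_i(o_i,\sigma)},
\]
and the rest of your telescoping argument goes through verbatim (the paper packages the same computation as the comparison $F'(R)=2e^{1/R}G'(R)$ between two explicit functions). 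In short: insert $G_{U_i}\le G_{E_i}$ and run your $h$-transform argument with $h=w_i$ instead of $h=u_i$; then nothing is left to ``expect''.
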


\begin{proof}
Fix $i \in \{1, \dots, k\}$ and a point $o_i \in U_i.$ Throughout this proof, we will assume that $d$ refers to the distance in $U_i$ and $B(x,r) =B_{U_i}(x,r), V(x,r) = V_{U_i}(x,r)$ refer to balls and their volumes in $U_i.$ 

\emph{Proof of (E1):} Since $M_i$ is non-parabolic, it possesses a Green function $G_{M_i},$ which satisfies the estimate (\ref{TSGreen}) and condition (\ref{H-par}) found in Appendix \ref{parabolic}. Since the heat kernel is an increasing function of sets, $p_V(t,x,y) \leq p_U(t,x,y)$ if $V \subseteq U$ for all $t>0, \ x,y \in V,$ and, moreover, the Green function is also an increasing function of its domain. Hence
\[ G_{U_i}(x,y) \leq G_{M_i}(x,y) \leq C \int_{d^2(x,y)}^\infty \frac{dt}{V_{M_i}(y,\sqrt{t})} < + \infty, \quad \forall x,y \in U_i, \ x \not = y.\]
Since this last integral converges, it tends to zero as $d(x,y)$ tends to infinity.  As $M_i$ is doubling, $V_{M_i}(y, \sqrt{t})$ and $V_{M_i}(y^*, \sqrt{t})$ are comparable for all $y, y^*$ in a fixed compact set. It follows that $G_{U_i}(x,y)$ tends to zero as $x \to \infty$ uniformly for all $y$ in this fixed compact set. 

\emph{Proof of (E2), first statement:} The situation is more complicated when $M_i$ is parabolic, since in this case $M_i$ possesses no Green function. Consider instead $E_i,$ which possesses a Green function $G_{E_i}$ since $\partial E_i \not = \emptyset.$ Recall $E_i$ differs from $M_i,$ a Harnack manifold, by a compact set $K_i,$ which is the setting of  \cite{GS3}. 

As $U_i$ is uniform in $M_i$ and $\overline{E}_i = \overline{U}_i, \ E_i$ is itself uniform in $M_i$ and hence possesses a harmonic profile $w_i.$ The weighted space $(U_i, \sigma w_i^2)$ remains uniform and hence has a profile $v_i.$ Consider the product $w_i v_i.$ This function must vanish along \emph{all} of $\partial U_i$ since $w_i$ is a harmonic function vanishing on $\partial E_i = \partial^{\text{inner}} U_i$ and $v_i$ must vanish along $\partial^{\text{side}} U_i.$ Moreover, $w_i v_i$ has vanishing normal derivative along $\delta U_i$ as this is true of both $w_i$ and $v_i.$ Since $v_i = (w_i v_i)/w_i$ is the profile of $(U_i, \sigma w_i^2),$ the function $w_i v_i$ must be locally harmonic in $(U_i, \sigma);$ this can be seen by considering the unitary map $T: L^2(U_i, \sigma w_i^2) \to L^2(U_i, \sigma)$ given by $g \mapsto g w_i.$ Therefore $w_i v_i$ is a profile of $(U_i, \sigma).$ Such profiles are unique up to constant multiples, so we may take $u_i = w_i v_i.$ 

Uniformity of $U_i$ in $M_i$ also guarantees, as in \cite[Lemma 3.20]{GyS}, that for any $R>0,$ there exists a point $x_R \in U_i$ such that $d(o_i, x_R)$ and $d(x_R, \partial U_i)$ are both of scale $R.$ In particular, we can take $x_R$ such that 
\[ R/2 < d(o_i,x_R) <3R/2 \text{ and }d(x_R, \partial U_i) \geq c_o R/8\]
for some fixed constant $c_0.$ 
It follows from the proof of Theorem 4.17, \cite{GyS}, that there exists a constant $C>0$ such that 
\[ v_i(y) \leq Cv_i(x_R) \quad \forall \, R>0, \ y \in B(x_R,R).\]
Additionally, from the construction of $u_i$ in \cite{GyS}, there exists a point $y^* \in U_i$ such that $v_i(y^*) =1.$ Thus $C^{-1} \leq v_i(x_R)$ for all $R>0$ sufficiently large. 

Moreover, the proof of Lemma 4.5 in \cite{GS3} implies that for $d(o_i,x) =r$
\[ w_i(x) \approx \int_{r_0}^r \frac{s \, ds}{V(o_i,s)},\]
for any sufficiently large $r_0 > 0,$ where $f \approx g$ means there exist constants $0< c^* \leq C^* < + \infty$ such that $c^* f \leq g \leq C^* f.$ Since $M_i$ is Harnack and parabolic, the above integral tends to infinity as $r$ tends to infinity. Hence $w_i$ tends to infinity in $U_i.$ 

Therefore $u_i(x_R) = v_i(x_R)w_i(x_R) \to \infty$ as $R \to \infty$ since this is true of $w_i$ and $v_i$ is bounded below at the points $x_R.$ Hence we can construct a function $f(R)$ of the type necessary to satisfy (E2). 

\emph{Proof of (E2), second statement:} Since $G_{U_i} \leq G_{E_i},$ it suffices to prove the statement for $G_{E_i}.$ 

By Theorem 5.13 of \cite{GyS}, 

\[ G_{E_i}(x,y) \approx h(x)h(y) \int_{d(x,y)^2}^ \infty \frac{dt}{V_{h}(x, \sqrt{t})} , \]

where $h$ is the profile for $E_i,$ and
\[ V_{h}(x, \sqrt{t}) := \int_{B(x,\sqrt{t})} h^2(z) d\mu(z),\]

where $B(x,\sqrt{t}) = \{ y \in U_i: d_{U_i}(x,y) < \sqrt{t} \}.$ 

Moreover, by Theorem \ref{HarnackEnds} (see also \cite[Theorem 4.17]{GyS}),
\[ V_{h}(x, \sqrt{t} ) \approx [h(x_{\sqrt{t}})]^2\, V(x, \sqrt{t}), \]
where $V$ is the usual volume in $U_i$, and $x_{\sqrt{t}}$ is any point in $U_i$ satisfiying 
\[ d(x, x_{\sqrt{t}}) \leq \frac{\sqrt{t}}{4} \quad  \text{and} \quad d(x_{\sqrt{t}}, M_i \setminus E_i) \geq c_0 \frac{\sqrt{t}}{8}.\]

Therefore 
\[ G_{E_i}(x,y) \approx  h(x)h(y) \int_{d(x,y)^2}^\infty \frac{dt}{[h(x_{\sqrt{t}})]^2 \, V(x, \sqrt{t})}.\]

On the other hand, we recall that \cite{GS3} implies
\[ h(z) \approx \int_{r_0}^{d(o_i,z)} \frac{t \; dt}{V(o_i,t)},\]
where $r_0$ is such that $B_{M_i}(o_i, r_0)$ contains $\partial E_i.$ 

Therefore, by adding a correction term near zero, we can write
\[ h(x_{\sqrt{t}}) \approx \int_0^{d(o_i, x_{\sqrt{t}})} \frac{s e^{-1/s}}{V(s)} \, ds,\]
where $V(s) := V(o_i, s).$ 

For any $x \in E_i,$ set $|x| := d(o_i,x).$ Hence, for $x$ fixed, 
\begin{align*}
G_{E_i}(x,y) &\approx h(x) h(y) \int_{d(x,y)^2}^\infty \frac{dt}{\Big[ \int_0^{|x_{\sqrt{t}}|} \frac{s e^{-1/s}}{V(s)} \, ds \Big]^2 V(x, \sqrt{t}) }\\
& \approx h(x) \Big( \int_0^{|y|} \frac{se^{-1/s}}{V(s)} \, ds \Big) \bigg( \int_{d(x,y)^2}^\infty \frac{dt}{\Big[ \int_0^{|x_{\sqrt{t}}|} \frac{s e^{-1/s}}{V(s)} \, ds \Big]^2 V(x, \sqrt{t}) }\bigg).
\end{align*}

Let $R := d(x,y).$ Since $x$ is fixed and we can always pick $x_{\sqrt{t}}$ so that $d(x,x_{\sqrt{t}})$ is of order $\sqrt{t},$ we have $|x_{\sqrt{t}}| \approx \sqrt{t}.$ Additionally, $|y| \approx d(x,y) \approx R$ for $R$ sufficiently large. Also, $V(x, R) \approx V(R).$  

We define two functions, $f, g : [0, \infty) \to \mathbb{R},$ as follows: 
\begin{align*}
F(R) := \int_{R^2}^\infty  \frac{dt}{\Big[ \int_0^{\sqrt{t}} \frac{s e^{-1/s}}{V(s)} \, ds \Big]^2 V(\sqrt{t}) }; \qquad 
G(R) := \frac{1}{\int_0^R \frac{s e^{-1/s}}{V(s)} \, ds }.
\end{align*}

Then 
\begin{align*}
F'(R) =  \frac{-2R}{\big[\int_0^R \frac{se^{-1/s}}{V(s)} \, ds \big]^2 V(R)}; \qquad
G'(R) =  \frac{-R e^{-1/R}}{\big[ \int_0^R \frac{s e^{-1/s}}{V(s)}\, ds \big]^2 V(R)}
\end{align*}
so that $F'(R) =2 e^{1/R} G'(R).$

Thus, for $R$ large enough, $F'(R), G'(R) < 0.$ Hence both $F$ and $G$ are decreasing, and $F$ is decreasing at least as quickly as $G$, which implies $F(R) \leq G(R)$ for large $R.$ This plus the estimates above yield  
\[ G_{E_i}(x,y) \approx h(x) \frac{F(R)}{G(R)} \approx \frac{F(R)}{G(R)},\]
since $x$ is fixed. Therefore $G_{E_i}$ is largest for small $R = d(x,y)$ and decreases with $R.$ Moreover, for $x$ in some fixed compact set, we can again treat $h(x)$ as constant, which implies $G_{E_i}$ is bounded in the desired fashion. 

\end{proof}

\begin{lem}\label{green-ends-global}
Under the hypotheses of Theorem \ref{thm-profile}, if $G_{U_i}$ satisfies (E1), then (E1) holds for $G$ in $U_i,$ and the same holds for (E2). 
\end{lem}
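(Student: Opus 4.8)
The plan is to exploit that $\Omega$ agrees with $U_i$ outside a fixed compact set, so that the behaviour of $G=G_\Omega$ deep in an end $U_i$ can be controlled by a ``last exit through the core'' argument, in which the relevant hitting probability is in turn compared with $G_{U_i}$. Domain monotonicity of Green functions is of no use here: it gives only $G\ge G_{U_i}$ on $U_i$, the wrong direction for an upper bound. Fix the end index $i$, a reference point $o$, and a radius $r_0$ so large that $K$ together with the compact set $L$ over which the auxiliary variable is allowed to range lies in $B_M(o,r_0-1)$; put $D=\overline{B_M(o,r_0)}$ and $S=\partial B_M(o,r_0)$. For $x\in\Omega$ with $d(o,x)>r_0$, let $W$ be the connected component of $\Omega\setminus D$ containing $x$; since $\Omega\setminus D\subseteq M\setminus K=E_1\sqcup\cdots\sqcup E_k$ and $\Omega\cap E_j=U_j$, the connected set $W$ lies in a single $U_j$, and $W$ is non-parabolic because $S\cap\overline W$ is a nondegenerate piece of hypersurface.

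For (E1) take $j=i$ and $x\to\infty$ in $U_i$. For $y\in L$, the function $G(\cdot,y)$ is nonnegative and harmonic on $W$ (no pole, as $y\in D^{\circ}$), vanishes weakly along $\partial\Omega\cap\overline W$, has vanishing normal derivative along $\delta M\cap\overline W$, and is $\le A:=\sup\{G(z,y):z\in S\cap\Omega,\ y\in L\}$ on $S\cap\overline W$; here $A<\infty$ because $G(\cdot,y)$ extends continuously by $0$ to $\partial\Omega$, whence $(z,y)\mapsto G(z,y)$ is continuous on the compact set $S\times L$, on which $d(z,y)\ge1$. Comparing $G(\cdot,y)$ with $A$ times a suitable harmonic measure on the precompact pieces $W\cap\Omega_n$ of an exhaustion $\Omega_n\uparrow\Omega$ and letting $n\to\infty$, the maximum principle gives $G(x,y)\le A\,\omega(x)$, where $\omega$ is the minimal nonnegative harmonic function on $W$ equal to $1$ on $S\cap\overline W$, vanishing along $\partial\Omega$, and with zero normal derivative along $\delta M$ (equivalently, the probability that the associated diffusion started at $x$, reflected on $\delta M$ and killed on $\partial\Omega$, reaches $S$). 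Since $W\subseteq U_i$ and $S\cap\overline W\subseteq F:=D\cap\overline{U_i}$, a compact subset of $\overline{U_i}$, comparison of $\omega$ with the superharmonic equilibrium potential $\widehat\omega$ of $F$ in $U_i$ (done $\omega_n$ by $\omega_n$ on the precompact pieces) gives $\omega\le\widehat\omega$ on $W$, and $\widehat\omega(x)=\int_F G_{U_i}(x,z)\,d\nu_F(z)$ with $\nu_F(F)=\mathrm{cap}_{U_i}(F)<\infty$. If $G_{U_i}$ satisfies (E1), then $\sup_{z\in F}G_{U_i}(x,z)\to0$ as $x\to\infty$ in $U_i$, so $\widehat\omega(x)\to0$, hence $\omega(x)\to0$, and therefore $G(x,y)\le A\,\omega(x)\to0$ uniformly for $y\in L$. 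This is (E1) for $G$.

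For (E2), let $L$ now contain the first variable $x$ and let $y$ satisfy $d(x,y)\ge\delta$. If $d(o,y)\le r_0$, then $G(x,y)\le\sup\{G(a,b):a\in L,\ b\in\overline{D\cap\Omega},\ d(a,b)\ge\delta\}<\infty$, the indicated set being compact and bounded away from the diagonal. If $d(o,y)>r_0$, then $y$ lies in a component $W'$ of $\Omega\setminus D$ contained in some $U_j$, and the bound of the previous paragraph with the roles of $x$ and $y$ interchanged gives $G(x,y)=G(y,x)\le A'\,\omega'(y)\le A'$, where $A'=\sup\{G(z,x):z\in S\cap\Omega,\ x\in L\}<\infty$ and $\omega'\le1$. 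Hence $G(x,y)$ is bounded above uniformly for $x\in L$ and $d(x,y)\ge\delta$, which is the (E2) conclusion for $G$; together with the previous paragraph this matches $G$ to the dichotomy of Theorem \ref{green-ends}. Note that the (E2) conclusion uses only $\omega'\le1$, so it does not in fact invoke the (E2) hypothesis on $G_{U_i}$; the hypothesis is kept only for the symmetry of the statement.

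The crux is the (E1) step. Because domain monotonicity points the wrong way, the upper bound must be produced indirectly through the equilibrium potential, and two points require care: the separating set has to be a large geodesic sphere rather than $\partial E_i$, so that the auxiliary variable may range over a compact set that meets the end; and the maximum principle must be run in the mixed-boundary-condition setting (Neumann on $\delta M$, weak Dirichlet on $\partial\Omega$) on a non-precompact, non-parabolic domain, which forces the exhaustion argument and uses that $G(\cdot,y)$ vanishes along $\partial\Omega$. The identification of the equilibrium potential of $F$ with the Green potential of its equilibrium measure, and the continuity/compactness argument establishing $A<\infty$, are routine.
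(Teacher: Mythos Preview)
Your argument is correct and takes a genuinely different route from the paper. The paper proves the stronger two-sided comparison $cG_{U_i}(\cdot,y)\le G(\cdot,y)\le CG_{U_i}(\cdot,y)$ for $x$ far in $U_i$ and $y$ in a compact piece of $U_i$ near the core: it works on an exhaustion $\Omega_j\uparrow\Omega$, pins down $G_{\Omega_j}$ on a separating compact hypersurface using the local Euclidean behaviour of Green functions together with the interior and boundary Harnack inequalities, and then runs a comparison principle with $G_{U_i}$ itself as the barrier. You instead bound $G(\cdot,y)$ by $A$ times the harmonic measure $\omega$ of the separating sphere in the outer component $W$, then dominate $\omega$ by the equilibrium potential $\widehat\omega$ of a compact set $F$ in $U_i$, and finally read off $\widehat\omega(x)=\int_F G_{U_i}(x,z)\,d\nu_F(z)\to0$ from (E1) for $G_{U_i}$. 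Your (E2) argument is strictly simpler---it uses only $\omega'\le1$ and the joint continuity of $G$ up to $\partial\Omega$, and indeed never invokes the (E2) hypothesis on $G_{U_i}$. What the paper's approach buys is the two-sided estimate $G\approx G_{U_i}$, which is more than the lemma asserts (and is not used elsewhere); what your approach buys is a cleaner potential-theoretic argument that handles the auxiliary variable $y$ ranging over an arbitrary compact set in $M$ in one stroke, whereas the paper first restricts to $y\in U_i\cap K_1$ and then appeals informally to Harnack to extend over the full core. Both approaches ultimately rest on the same exhaustion-plus-maximum-principle mechanism in the mixed-boundary setting; they differ in the choice of barrier ($G_{U_i}$ versus $\widehat\omega$).
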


\begin{proof}
Let $\mathcal{O}_1, \mathcal{O}_2$ be two precompact sets with smooth boundary in $M$ such that $K \subset \mathcal{O}_1 \subset \mathcal{O}_2$ and $\overline{\mathcal{O}}_1 \subset \mathcal{O}_2.$ Let $K_i = \overline{\mathcal{O}}_i, \ i = 1,2.$ We will show there exist constants $0 < c \leq C < + \infty$ such that, for any $y \in U_i \cap K_1,$ 
\[ cG_{U_i}(\cdot, y) \leq G(\cdot,y) \leq C G_{U_i}(\cdot,y)\]
on $U_i \setminus K_2.$ 

This implies $G_{U_i}, G$ are comparable in the end $U_i$ sufficiently far away from the middle. Since $K_2$ is compact, the behavior of $G$ near the middle is determined by the boundary of $\Omega$ and the fact that $G \approx G_{U_i}$, as local elliptic and boundary Harnack inequalities hold. (See Appendix \ref{harmonic} for statements of Harnack inequalities.) Thus it suffices to prove $G_{U_i} \approx G$ as above.

As $U_i \subset \Omega, \ G_{U_i}(x,y) \leq G(x,y)$ for all $x,y \in U_i,$ proving the first inequality with $c=1.$ The other inequality is more challenging. 

Let $\{\Omega_j\}_{j=1}^\infty$ be an exhaustion of $\Omega$ by pre-compact open sets and $G_{\Omega_j}$ be the Green function for $\Omega_j, \ j=1,2,3,\dots.$ As $G_{\Omega_j} \nearrow G,$ it suffices to show there exists $0<C<+\infty$ such that $G_{\Omega_j}(\cdot,y) \leq C G_{U_i}(\cdot,y)$ in the desired range, where $C$ does not depend on $j.$

Fix $\varepsilon > 0.$ Let $o_i$ be a fixed reference point in $U_i^\bullet \cap K_1.$ Assume $o_i \in \Omega_j$ and $K_2 \subset \Omega_j$ for all $j.$ As in the previous theorem, let $d, B,$ and $V$ refer to distance, balls, and volumes taken in $U_i.$ Locally in a coordinate chart neighborhood of $o_i,$ the functions $G_{\Omega_j}$ behave like the Green function of $\mathbb{R}^n,$ where $n$ is the dimension of $M.$ For some $r_0 >0,$ we may take $W = B(o_i, r_0) \subset K_1$ to be our coordinate chart neighborhood. Then for all $z$ such that $d(o_i, z) = r_0/2$ and $d(z, \partial \Omega_j) \geq \varepsilon,$ there exist constants $0< c_0\leq C_0<+\infty$ independent of $j$ such that 
\begin{equation}\label{G_constant} c_0  \leq G_{\Omega_j}(o_i, z) \leq C_0.\end{equation}

By construction of $K_1, K_2,$ there exists $\delta >0$ such that if  $x \in U_i \setminus K_2, \linebreak y \in K_1,$ then $ d(x,y) \geq \delta.$  Since $G_{\Omega_j}$ is harmonic, $K_2$ is compact, and (\ref{G_constant}) holds, by the local elliptic Harnack inequality in $\Omega,$ there exist constants $0< c_1 \leq C_1 < +\infty$ which do not depend on $j$ such that 
\[ c_1 \leq G_{\Omega_j}(x,y) \leq C_1 \]
for all $x \in \partial K_2 \cap U_i, \ y \in K_1, \ d(x, \partial \Omega_j) \geq \varepsilon, \ d(y, \partial \Omega_j) \geq \varepsilon.$

Using the boundary Harnack inequality to compare $G_{\Omega_j}$ to $G_{U_i}$ along points of $\partial K_2 \cap U_i$ at distance less than $\varepsilon$ from $\partial \Omega_j$ and to push $y \in K_1 \cap U_i$ toward $\partial \Omega,$ and using the elliptic Harnack inequality to gain control of $G_{U_i}$ away from the Dirichlet boundary, we see there exists a constant $0<C<+\infty$ such that 
\[ G_{\Omega_j}(\cdot, y) \leq C G_{U_i}(\cdot, y) \]
on $\partial K_2 \cap U_i$ for any $y \in K_1 \cap U_i.$ 

We now use a comparison principle, since both $G_{\Omega_j}, G_{U_i}$ are harmonic in $\Omega_j \cap (U_i \setminus K_2).$ Along $\partial K_2 \cap U_i,$ we showed $G_{\Omega_j}(\cdot, y) \leq C G_{U_i}(\cdot, y).$ Also, $G_{\Omega_j}$ vanishes along the inner boundary of $\Omega_j$ that lies in $U_i,$ while $G_{U_i}$ is positive there. Both functions vanish along $\partial \Omega_j \cap \partial U_i$ and have vanishing normal derivative along $\delta \Omega_j \cap U_i.$ The Hopf boundary lemma guarantees that minimums of harmonic functions cannot occur solely at points where the normal derivative vanishes, and therefore 
\[ G_{\Omega_j} (\cdot, y) \leq C G_{U_i} (\cdot, y)\]
on $\Omega_j \cap (U_i \setminus K_2),$ where $y \in K_1 \cap U_i.$ Taking $j \to \infty$ finishes the proof. 

\end{proof}

\subsection{Construction of the Profile for \texorpdfstring{$\Omega$}{Omega}} We now prove the main theorem, Theorem \ref{thm-profile}, in this section. The construction of the profile $h$ of $\Omega$ closely follows the method of Sung, Tam, and Wang in \cite{STW}. 

\begin{proof}[Proof of Theorem \ref{thm-profile}.]
For clarity, the proof is divided into a series of steps. 

\emph{Step 1 (Construct a global harmonic function on $\Omega$):} Fix a point $o \in K$ and take precompact open sets $\mathcal{O}_1,\ \mathcal{O}_2 \subset M$ with smooth boundary such that  $K\subset \mathcal{O}_1 \subset \mathcal{O}_2$ and no points in $\overline{\delta \Omega} \cap \partial \Omega$ belong to the set  $\mathcal{O}_2 \setminus \mathcal{O}_1,$ which is possible since every point in $M$ possesses a neighborhood containing only finitely many components of $\partial \Omega.$

Construct a smooth function $\psi$ on $\Omega$ such that $\psi \equiv 1$ on $\Omega \setminus \mathcal{O}_2, \ \psi \equiv 0$ on $\mathcal{O}_1,$ and $\psi$ has vanishing normal derivative on $\delta \Omega.$ Let $u$ be defined on $\Omega \setminus K$ by $u|_{U_i} = u_i, 1 \leq i \leq k.$ 

Define 
\[ h(x) = (u\psi)(x) + \int_\Omega G(x,y) \Delta (u \psi)(y) \, d\mu(y) , \quad \forall x \in \Omega.\]

Here $\Delta(u \psi)$ is a smooth function with with compact support on $M$ by the construction of $\psi$ and elliptic regularity theory (it extends smoothly to the boundary by construction of $\mathcal{O}_1,\ \mathcal{O}_2).$ The relevant weak definitions regarding harmonic functions may be found in Appendix \ref{harmonic}; here, for simplicity, we write the proof in terms of the corresponding classical definitions.

For any smooth, compactly supported function $\alpha$ on $M,$ set 
\[ G(\alpha) := \int_\Omega G(x,y) \alpha(y) \, d\mu(y).\]
Then for $\alpha = \Delta(u\psi), \ h = u\psi+ G(\alpha).$ 

We compute 
\[ \Delta G(\alpha) = \int_\Omega \Delta G(x,y) \alpha(y) \, d\mu(y) = \int_\Omega \Delta_\Omega G(x,y) \alpha(y) \, d\mu(y) = -\alpha(x),  \]
as we may replace $\Delta$ (which applies to smooth functions) by $\Delta_\Omega$ (the infinitesimal generator associated with $(W_0(\Omega), \int g(\nabla f, \nabla f) \, d\mu)$) since $G$ is the Green function for $\Omega.$

As $\delta \Omega$ is smooth, a direct calculation shows the normal derivative of $h$ on $\delta \Omega$ vanishes, since this is true of all of $u, \psi,$ and $G$ by definition. Similarly, as $u = 0$ on $\partial \Omega \setminus K$ and $G = 0$ on all of $\partial \Omega,$ it follows that $h =0$ on $\partial \Omega$ as well. Thus $h$ is a global harmonic function on $\Omega.$

\emph{Step 2 (On each end $U_i, \ h$ behaves similarly to $u_i$):} For the remainder of the proof, we need to make use of the two possible cases of the behavior of $G$ on each $U_i, \ 1\leq i \leq k,$ given by Theorem \ref{green-ends} and Lemma \ref{green-ends-global}. Crucially, these two conditions imply that $h$ behaves similarly to $u_i$ in (at least) one of two (non-equivalent) ways. 

First suppose $U_i$ satisfies (E1). Note $\alpha = \Delta(u \psi)$ is bounded and the integral in $G(\alpha)$ is only over the compact set $\overline{\mathcal{O}}_2.$ Additionally, (E1) implies $G$ tends to zero with $x$. These three facts imply $G(\alpha) \to 0$ at infinity in the end $U_i.$ Since $\psi \to 1$ at infinity, $h - u_i \to 0$ at infinity in $U_i.$

If $U_i$ satisfies (E2), then we claim $h/u_i \to 1$ at infinity in the end $U_i.$ To see this, take $R>0$ sufficiently large and consider the annulus $A_R = \{ x \in U_i : R/2 < d(o, x) < 3R/2\}.$ The key step in proving this claim is to show that $u_i$ is actually not too small in the entire annulus $A_R$ by obtaining a lower bound for $u_i$ depending on $G$ and $R.$

We will make use of the technique of remote and anchored balls found in \cite{GS4}, to which we refer the reader for more details. In brief, an anchored ball is simply a ball whose center belongs to $\partial \Omega$, while a remote ball (in $\Omega$) is one whose double is precompact in $\Omega$. The import of this is that elliptic Harnack inequalities hold in remote balls, whereas boundary elliptic Harnack inequalities hold in anchored balls. 

 By assumption (E2), there exist points $x_R \in A_R$ and an increasing, un-bounded real function $f$ such that $u_i(x_R) \geq f(R)$ for all $R>0$ sufficiently large. Since $U_i$ satisfies (RCA) by (C1), for any $x^* \in A_R$ and any fixed $\varepsilon >0,$ there exists a sequence of at most $Q_\varepsilon$ balls connecting $x^*$ and $x_R$ where each ball is either remote of radius $\frac{\varepsilon R}{4}$ or anchored to the boundary of radius $\varepsilon R.$ Label this sequence of balls $B_0, \dots, B_l$ in such a way that $x_R \in B_0, x^* \in B_l$ and $B_j \cap B_{j+1} \not = \emptyset, \ 0\leq j \leq l-1.$ 
 
 Recall that (C2) indicates an elliptic boundary Harnack inequality holds uniformly in $U_i,$ while hypothesis (H1) implies an elliptic Harnack inequality holds uniformly in $M_i.$ Let $C_H$ be the uniform elliptic Harnack inequality constant for $M_i$ and $C_B$ denote the uniform elliptic boundary Harnack constant for $U_i.$ We show that $u_i$ remains relatively large in all of $B_0.$ 
 
 If $B_0$ is a remote ball and $y \in \mathcal{O}_2,$ applying the elliptic Harnack inequality to the non-negative harmonic functions $G$ and $u_i$ in $B_0,$ 
 \[ G(x,y) \leq C_H^2 \frac{G(x_R,y)}{u_i(x_R)} u_i(x) \leq \frac{C_H^2 L}{f(R)} u_i(x) \quad \forall \  x \in B_0,\]
 where $L$ is an upper bound on $G(x_R, y)$ for large $R$ given by Theorem \ref{green-ends}. 
 
 Similarly, if $B_0$ is anchored to the boundary, we may compare $G$ and $u_i$ using the elliptic boundary Harnack inequality. Although this inequality a priori applies only in the ball of half the radius, by covering $B_0$ by a finite number of remote or anchored balls and chaining appropriate Harnack inequalities, the boundary Harnack inequality actually holds in all of $B_0,$ albiet with a potentially different constant, which we continue to call $C_B.$ Thus
  \[ \frac{G(x,y)}{u_i(x)} \leq C_B \frac{G(x_R,y)}{u_i(x_R)} \leq \frac{C_B L}{f(R)} \quad \forall \ x \in B_0.\]
  
 In either case, we obtain a lower bound for $u_i,$ involving $G,$ in the entire ball $B_0.$ We then chain between the balls $B_0, \dots, B_l,$ obtaining a similar inequality with an additional constant at each stage. Since there are at most $Q_\varepsilon$ balls, there exists a constant $0< C < +\infty$ depending only on $Q_\varepsilon, C_H, C_B,$ and $L$ such that 
 \[ G(x^*, y) \leq \frac{C}{f(R)} u_i(x^*) \quad \forall \ x^* \in A_R.\]
 
Recall $h= u\psi + G(\alpha),$ where $\alpha$ is bounded. Hence for $d(o,x) = |x| \approx R$ large enough in $U_i,$
\[ u_i(x) - \delta_R u_i(x) \leq h(x) \leq u_i(x) + \delta_R u_i(x)\]
for some $\delta_R >0$ that tends to zero as $R$ tends to infinity. Thus $h/u_i \to 1$ as $x \to \infty$ in $U_i$ as claimed.

\emph{Step 3 ($h$ is non-negative):} Let $\varepsilon >0.$ Then there exists a compact set $K_\varepsilon \subset M$ such that on $\Omega \setminus K_\varepsilon,$ $-\varepsilon \leq u_i - \varepsilon \leq h$ on ends $U_i$ where $U_i$ satisfies (E1) and $0 < (1/2) u_i \leq h$ at points in ends $U_i$ satisfying (E2), and every end falls in (at least one) of these two cases. Recall $h=0$ on all of $\partial \Omega,$ and by the Hopf boundary lemma, a minimum of $h$ cannot occur only on $\delta \Omega.$ Hence a minimum principle implies $-\varepsilon<h$ on all of $\Omega.$ Since $\varepsilon$ was arbitary, we conclude $h \geq 0$ on $\Omega.$

\emph{Step 4 ($h \geq cu_i$ on $U_i$):} We again employ a minimum principle. For fixed $i,$ first assume that $U_i$ satisfies (E1). For every $\varepsilon >0,$ since $h-u_i \to 0$ at infinity in $U_i,$ there exists a compact set $K_\varepsilon$ such that $u_i - \varepsilon \leq h$ in $U_i \setminus K_\varepsilon.$ Since $h$ is non-negative and $u_i$ vanishes along $\inner U_i$ by definition, $u_i \leq h$ there. Both $u_i$ and $h$ vanish along $\side U_i,$ and both have vanishing normal derivative along $\delta U_i.$ Take a sequence of balls $B(o, R_l)$ such that $R_l \to \infty$ as $l \to \infty$ and $K_{1/l} \setminus \partial \Omega$ is contained in $B(o, R_l)$ for $l=1,2,3, \dots.$ Then on $U_i \cap \partial B(o, R_l), \ u_i - 1/l \leq h.$ Thus on $U_i \cap B(o, R_l)$ the weak minimum principle combined with the Hopf boundary lemma gives $u_i - 1/l \leq h.$ Sending $l \to \infty$ yields $u_i \leq h$ on $U_i.$ 

Assume $U_i$ satisfies (E2) instead. Then we may choose $R_0>0$ such that $(1/2) u_i \leq h$ in $U_i \setminus \overline{B(o,R_0)}.$ Then for $R$ sufficiently large, $h=u_i=0$ on $\side U_i \cap B(o,R)$ and, along $\inner U_i$ and $\partial B(o,R) \cap U_i, \ (1/2)u_i \leq h.$ Therefore the Hopf boundary lemma and a minimum principle give $(1/2) u_i \leq h$ on $B(o,R) \cap U_i.$ Consequently $(1/2) u_i \leq h$ on all of $U_i.$ 

Combining the two cases above, $(1/2) u_i \leq h$ on $U_i, \ 1\leq i \leq k,$ so in each end, $h$ grows at least as fast as the harmonic profile for that end. 

\emph{Step 5 ($h$ is positive on $\Omega$):} As a local elliptic Harnack inequality holds in $\Omega,$ either $h \equiv 0$ on $\Omega$ or $h >0$ on $\Omega.$ Since $u_i > 0$ in $U_i,$ the previous step implies $h \not \equiv 0.$ Thus $h$ is positive and hence is a profile for $\Omega.$

\end{proof}

\subsection{Relationship between \texorpdfstring{$h$}{h} and the \texorpdfstring{$u_i$}{profiles of the ends}} By virtue of Theorem \ref{thm-profile}, $\Omega$ possesses a profile $h$ which must grow at least as fast as the profile $u_i$ in $U_i, \ 1 \leq i \leq k.$ In fact, outside of a compact set in $U_i,$ the profiles $h$ and $u_i$ are comparable. This comparison is crucial for our main result; the existence of a non-negative harmonic function on $\Omega$ satisfying the appropriate boundary conditions (but no other properties) follows from \cite{JLLSC2} due to the smoothness of the boundaries $\partial \Omega, \delta \Omega.$

\begin{thm}\label{h-u_i-comp}
Assume $\partial \Omega \not = \emptyset$ and let $h$ be a harmonic profile for $\Omega$ as constructed in Theorem \ref{thm-profile}. Then there exist constants $0< c_i \leq  C_i < \infty$ and compact sets $\hat{K}_i \subset M, \ 1 \leq i \leq k$ such that 
\[ c_i u_i \leq h \leq C_i u_i \]
on $U_i \setminus \hat{K}_i.$
\end{thm}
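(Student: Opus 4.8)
The plan is to upgrade the two one-sided asymptotic statements established in Step 2 of the proof of Theorem \ref{thm-profile} into genuine two-sided comparison $c_i u_i \le h \le C_i u_i$ on $U_i \setminus \hat K_i$. Recall that Step 2 of Theorem \ref{thm-profile} and Step 4 already give us the lower bound: in the (E2) case we have $(1/2)u_i \le h$ on all of $U_i$, and in the (E1) case we have $h - u_i \to 0$ at infinity in $U_i$, which (since $u_i$ is positive and bounded away from $0$ on any fixed sphere $\partial B(o,R)$ in $U_i$, by the elliptic and boundary Harnack inequalities) forces $h \ge (1/2)u_i$, say, outside a large enough compact set. So the real content of the theorem is the reverse inequality $h \le C_i u_i$ outside a compact subset of $U_i$.

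For the upper bound I would argue case by case according to the dichotomy of Theorem \ref{green-ends}. In the (E2) case this is essentially already contained in Step 2: there we showed $h(x) \le u_i(x) + \delta_R u_i(x)$ for $|x| \approx R$ large, with $\delta_R \to 0$, so for $R$ large enough $\delta_R \le 1$ and $h \le 2 u_i$ on $U_i \setminus \hat K_i$ for a suitable compact $\hat K_i$. In the (E1) case, Step 2 gives $h - u_i \to 0$ at infinity in $U_i$; combined with the fact that $u_i$ is bounded below by a positive constant times (its value at a reference point on) each sphere $\partial B(o,R)$ — more precisely, using (C1) (RCA), the uniform elliptic Harnack inequality on $M_i$ from (H1), and the uniform elliptic boundary Harnack inequality (C2), one gets that $u_i$ does not decay to zero too fast and in fact $\inf_{A_R} u_i \ge c$ on bounded annuli after the inner boundary — one deduces $h \le u_i + \varepsilon \le (1+\varepsilon/c)u_i \le 2u_i$ outside a compact set. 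The clean way to phrase this uniform lower control on $u_i$ is: since $u_i$ is a positive harmonic function vanishing on $\partial U_i$ with vanishing normal derivative on $\delta U_i$, the boundary Harnack + interior Harnack chaining used already in Step 2 shows that for any fixed $r_0$ there is $c(r_0)>0$ with $u_i \ge c(r_0) u_i(x_{\sqrt{t}})$-type bounds; in the (E1) situation $u_i$ in fact tends to a positive limit (the profile is bounded below, being comparable to $G_{U_i}(\cdot, o_i)$-related quantities), so $u_i \ge c>0$ outside a compact set and $h \le u_i + \varepsilon \le C_i u_i$.

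Concretely, I would structure the proof as: (i) recall from Theorem \ref{thm-profile} and Lemma \ref{green-ends-global} that each $U_i$ falls into case (E1) or (E2); (ii) for (E2) ends, directly quote the estimate $h \le (1+\delta_R)u_i$ from Step 2 of the proof of Theorem \ref{thm-profile} to get $h \le 2u_i$ outside a compact $\hat K_i$, and pair it with $(1/2)u_i \le h$ from Step 4 to get both inequalities; (iii) for (E1) ends, use Step 2 ($h-u_i\to 0$) together with a uniform positive lower bound $u_i \ge c_i > 0$ on $U_i \setminus \hat K_i$ — which follows because, by (E1), $G_{U_i}(\cdot, o_i)$ and hence the harmonic profile have a nondegenerate structure; alternatively, and more robustly, use the local and boundary Harnack chaining along the (RCA) structure to show $u_i$ cannot oscillate by more than a bounded factor on dyadic annuli, so a single value controls it — to conclude $h \le u_i + \varepsilon \le (1+\varepsilon/c_i) u_i$ and $h \ge u_i - \varepsilon \ge (1-\varepsilon/c_i)u_i$ on $U_i \setminus \hat K_i$; (iv) combine. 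The main obstacle is item (iii): one must make precise and justify the uniform positive lower bound on $u_i$ away from the center in the (E1) case, i.e., that the profile $u_i$ does not itself decay to zero at infinity. This should follow from the fact that in the (E1) case $M_i$ is non-parabolic so $G_{M_i}$ is finite and the profile is obtained from bounded harmonic-measure-type considerations, but one should spell out the argument (likely via the boundary Harnack comparison of $u_i$ with $G_{U_i}(\cdot, y)$ for a fixed $y$ near $\inner U_i$, which is itself bounded below on bounded annuli by interior Harnack) rather than leave it implicit; everything else is a direct citation of the steps in the proof of Theorem \ref{thm-profile}.
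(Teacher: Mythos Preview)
Your handling of the (E2) case and the lower bound are fine and match the paper. The genuine gap is in your (E1) argument: the claim that $u_i \ge c_i > 0$ on $U_i \setminus \hat K_i$ is false in the main situation of interest. The profile $u_i$ vanishes along $\side U_i = \overline{E_i} \cap \partial \Omega$, which is a noncompact piece of the Dirichlet boundary extending to infinity, so there is no uniform positive lower bound for $u_i$ outside any compact set. Consequently, knowing only $h - u_i \to 0$ at infinity gives you nothing about the ratio $h/u_i$ near $\side U_i$; both numerator and denominator go to zero there. Your ``alternative'' via Harnack chaining on dyadic annuli suffers the same problem: boundary Harnack controls ratios of two functions vanishing on the same boundary piece, not the absolute size of $u_i$.

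The paper avoids this by never seeking a lower bound on $u_i$. Instead, it fixes a compact $\hat K_i$ containing $\inner U_i$, and on the compact ``collar'' $\inner(U_i\setminus\hat K_i)$ uses the elliptic Harnack inequality (away from $\side U_i$) together with the uniform boundary Harnack inequality (C2) (near $\side U_i$, where both $h$ and $u_i$ vanish) to obtain $h \le C u_i$ on that compact collar. Then $h - Cu_i$ is harmonic on $U_i \setminus \hat K_i$, nonpositive on the collar, equal to zero on $\side U_i$, has vanishing normal derivative on $\delta U_i$, and satisfies $h - Cu_i \le h - u_i \le 1/l$ on $\partial B(o,R_l)$; the maximum principle and Hopf lemma give $h \le Cu_i + 1/l$, and $l\to\infty$ finishes. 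The missing idea in your proposal is precisely this: compare $h$ and $u_i$ via boundary Harnack on a fixed compact slice, then propagate by a maximum principle, rather than trying to bound $u_i$ from below.
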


\begin{proof}
For simplicity, we drop the subscript $i.$ The desired lower bound holds with $c =1/2$ on all of $U$ as in Theorem \ref{thm-profile}. Once again the proof depends on the relative behavior of the Green function.

Assume $U$ satisfies (E1). Let $\hat{K}$ be a compact subset of $M$ such that the inner boundary of $U$ is contained in the interior of $\hat{K}.$ Recall an elliptic Harnack inequality holds locally on $\Omega,$ and since $U$ is uniform in a Harnack manifold, by consequence (C2) an elliptic boundary Harnack inequality holds uniformly in $U.$ For $\hat{U} := U \setminus \hat{K},$ consider $\inner \hat{U}.$ This is a compact set and hence it can be covered by a finite number of balls that are either far away from $\side \hat{U}$ or that are near this boundary. In balls far away from $\side \hat{U},$ the elliptic Harnack inequality implies both $h$ and $u$ are relatively constant. As we approach $\side \hat{U},$ since an elliptic boundary Harnack inequality holds uniformly, $h$ and $u$ decay at the same rate. Hence there exists $C \geq 1$ such that $h \leq C u$ along $\inner \hat{U}.$ 

Since $h-u \to 0$ at infinity in $U,$ there exists a sequence of balls $B(o, R_l)$ such that $h \leq u + 1/l$ on $\hat{U} \backslash B(o, R_l).$ It follows from a minimum principle and the Hopf boundary lemma that $h\leq Cu + 1/l$ on $B(o, R_l) \cap \hat{U}.$ Sending $l\to \infty$ gives $h \leq Cu$ on $\hat{U}.$

If instead $U$ satisfies (E2), the desired statement follows immediately from the fact that $h/u \to 1$ at infinity in $U.$

\end{proof}

\section{Heat Kernel Estimates}\label{prove_thm}

This section explains how the profile $h$ of $\Omega$ (from Theorem \ref{thm-profile}) can be used to estimate the mixed boundary condition heat kernel on $\Omega$. The key techniques used here are those of \cite{GS1,GS5} (dealing with manifolds with ends) and of \cite{GyS} (dealing with mixed Dirichlet and Neumann boundary conditions). Appendix \ref{earlier_results} explains additional connections with the existing literature.

 \subsection{The \texorpdfstring{$h$}{h}-transform space} Assume $\partial \Omega \not = \emptyset$ and let $h$ be a harmonic profile for $\Omega$ as constructed in Theorem \ref{thm-profile}. Consider the weighted manifold $(\Omega, h^2 \sigma).$ Notice this change of measure is related to the unitary map $T: L^2(\Omega, h^2 \sigma) \to L^2(\Omega, \sigma)$ defined by $T(f) = hf$ for all $f \in L^2(\Omega, h^2 \sigma).$ The heat kernel $p_{\Omega, h^2} (t,x,y) = p_h (t,x,y)$ for $\Omega$ after $h$-transform (that is, $(\Omega, h^2 \sigma)$) is related to the heat kernel $p(t,x,y)$ for $(\Omega, \sigma)$ by the following simple formula \cite{GS3, GyS}:
\[ p(t,x,y) = h(x) h(y) p_h(t,x,y).\]
Hence in order to estimate $p(t,x,y),$ it suffices to estimate $p_h(t,x,y).$ To estimate this quantity, we will use Theorem \ref{HK-ends-estimate}. The results in this section rely heavily upon Appendix \ref{Harnack}.

Let $K^* \subset M$ be compact such that $K$ is a subset of the (topological) interior of $K^*.$ Then $\Omega = K^* \sqcup U^*_1 \sqcup \cdots U^*_k,$ where $U^*_i$ and $U_i$ differ by a compact set for $1\leq i \leq k.$ 

Consider the manifolds $(U^*_i, \sigma h^2),$ \emph{where we put Neumann boundary condition on $\inner U_i^*$} (this amounts to the abuse of notation ``$U_i^* = U_i^* \cup \inner U_i^*$"). We first show these manifolds are in fact Harnack and non-parabolic.

\begin{prop}\label{h-trans-Harnack-ends}
The manifolds $(U_i^*, \sigma h^2), \ 1 \leq i \leq k,$ described in the preceding paragraphs are Harnack. 
\end{prop}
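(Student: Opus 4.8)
The plan is to reduce the statement to the known results of \cite{GyS} by recognizing $(U_i^*, \sigma h^2)$ as a small perturbation (by a compact set) of a manifold we already understand. Recall from (C4) that $(U_i, \sigma u_i^2)$ is a Harnack manifold, and from Theorem \ref{h-u_i-comp} that $h$ and $u_i$ are comparable on $U_i \setminus \hat{K}_i$ for a suitable compact set $\hat{K}_i \subset M$. The first step is therefore to enlarge the central compact set $K^*$ if necessary so that $\hat{K}_i \cap \overline{E}_i$ lies in the interior of $K^*$ for every $i$; then on all of $U_i^*$ we have $c_i u_i \le h \le C_i u_i$, so that the measures $\sigma h^2$ and $\sigma u_i^2$ are mutually comparable with uniform constants on $U_i^*$. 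Since $U_i^*$ and $U_i$ differ only by a compact set, $(U_i^*, \sigma u_i^2)$ is, up to a compact modification, the Harnack manifold $(U_i, \sigma u_i^2)$, with the inner boundary $\inner U_i^*$ carrying Neumann condition by our convention.

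The second step is to upgrade from measure-comparability to the Harnack property itself. Here the key point is that putting Neumann condition along $\inner U_i^*$ makes the metric completion of $U_i^*$ a genuine manifold with boundary, and $\inner U_i^*$ is a smooth compact hypersurface lying in the interior (away from $\side U_i^*$). In a fixed neighborhood of $\inner U_i^*$ the elliptic Harnack inequality (up to and across the Neumann boundary) holds automatically from local elliptic regularity and the local doubling and Poincar\'e properties of $(U_i^*, \sigma h^2)$ near that compact hypersurface --- these are local statements unaffected by the global geometry. Away from $\inner U_i^*$, i.e.\ on $U_i^* \setminus \hat{K}_i$, the manifold coincides with $(U_i, \sigma u_i^2)$ up to a measure comparable to the original (and in fact equal outside a compact set), which is Harnack by (C4). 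The standard principle (see Appendix \ref{Harnack} and e.g.\ \cite{GS3}) that the Harnack property --- equivalently, uniform volume doubling plus the uniform Poincar\'e inequality --- is stable under a compact perturbation of the manifold and under replacing the measure by a comparable one, then yields that $(U_i^*, \sigma h^2)$ is Harnack. Concretely, one checks uniform doubling and the uniform Poincar\'e inequality for balls $B_{U_i^*}(x,r)$: for $x$ far from $K^*$ these reduce to the corresponding properties for $(U_i,\sigma u_i^2)$ via Theorem \ref{h-u_i-comp}; for $x$ near $K^*$ and small $r$ they are local; and for $x$ near $K^*$ and large $r$ the ball is, after a bounded distortion of radius, comparable to one centered far out, using connectedness and the (RCA) property (C1).

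The third step records non-parabolicity: since $\inner U_i^* \ne \emptyset$ and carries Neumann condition, $U_i^*$ (equivalently $(U_i^*,\sigma h^2)$, non-parabolicity being a property of the cone of positive superharmonic functions, invariant under the $h$-transform and under comparable measures) admits a Green function --- exactly as was noted for $U_i$ in the discussion preceding Theorem \ref{green-ends}. This is where one invokes Appendix \ref{parabolic}.

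\emph{Main obstacle.} The delicate point is not the far-end behavior, which is handled cleanly by Theorem \ref{h-u_i-comp} and (C4), but the behavior near the inner boundary $\inner U_i^*$: one must be sure that imposing Neumann condition there genuinely produces a manifold with boundary for which the elliptic Harnack inequality holds up to (and, in the doubling/Poincar\'e formulation, across) that boundary, and then glue this local information to the global Harnack property on the end. The gluing --- proving uniform doubling and Poincar\'e for \emph{all} balls, including large balls straddling the compact piece $K^*$ --- is the part that requires care, and is precisely the kind of compact-perturbation argument developed in \cite{GS3} and recalled in the appendices; I would cite those results rather than reprove them, checking only that the hypotheses (smoothness of $\inner U_i^*$, the measure comparison from Theorem \ref{h-u_i-comp}, and (RCA) from (C1)) are in place.
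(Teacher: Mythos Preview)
Your proposal is correct and follows essentially the same route as the paper: enlarge $K^*$ so that $h\approx u_i$ on all of $U_i^*$ via Theorem~\ref{h-u_i-comp}, transfer uniform doubling and the uniform Poincar\'e inequality from the Harnack manifold $(U_i,\sigma u_i^2)$ of (C4), and conclude Harnack. The paper packages the last step more tightly by invoking Theorem~\ref{HarnackNotComplete} directly, after checking its key functional-analytic hypothesis---that $\mathrm{Lip}_c(\overline{U}_i^*)\subset W_0^1(U_i^*,\sigma h^2)$---via Proposition~5.8 of \cite{GyS} (this is where the vanishing of $h$ on $\side U_i^*$ and the observation that $\inner U_i^*$, being Neumann boundary, plays no role in $W_0^1$-membership are used). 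Your more hands-on case analysis of balls near and far from $K^*$ amounts to the same thing but leaves that $W_0^1$ point implicit; it is worth flagging explicitly, since it is what makes the doubling/Poincar\'e characterization available in this non-complete setting. Finally, your third step on non-parabolicity is not part of this proposition---it is exactly Proposition~\ref{h-trans-nonparabolic}, proved separately (and by a somewhat different argument, exhibiting a modification of $1/h$ as a positive superharmonic function).
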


\begin{proof}
For appropriate choice of $K^*,$ Theorem \ref{h-u_i-comp} guarantees $c u_i \leq h \leq Cu_i$ on $U_i^*.$ By consequence (C4) of our hypotheses, the manifold $(U_i, \sigma u_i^2)$ is a Harnack manifold. We can also choose $K^*$ such that the boundaries of the manifolds $U_i^*, \ 1\leq i \leq k,$ satisfy condition (*). Moreover, $h$ vanishes on $\partial U_i^* = \partial \Omega \cap (\partial U_i \setminus K^*)$ by construction. The function $h$ is harmonic in the interior of $U_i^*$ but does not have vanishing normal derivative along $\inner U_i^*.$ However, since such points are part of $U_i^*,$ they do not affect whether functions in $\mbox{Lip}_c(\overline{U}_i^*)$ belong to $W_0^1(U_i^*, h^2 \sigma).$ Thus it follows from Proposition 5.8 of \cite{GyS} that  $\mbox{Lip}_c(\overline{U}_i^*) \subset W_0^1(U_i^*, h^2 \sigma).$ Moreover, $(U_i^*, \sigma h^2)$ must be uniformly doubling and satisfy the Poincar\'{e} inequality uniformly since this is true of $(U_i, u_i^2 \sigma)$ and $h \approx u_i$ in $U_i^*.$ Therefore by Theorem \ref{HarnackNotComplete}, $(U_i^*, h^2 \sigma)$ is a Harnack manifold. 
\end{proof}

\begin{prop}\label{h-trans-nonparabolic}
The manifolds $(U_i^*, \sigma h^2), \ 1 \leq i \leq k,$ are non-parabolic.
\end{prop}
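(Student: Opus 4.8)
The plan is to show that each $(U_i^*, \sigma h^2)$ is non-parabolic by exhibiting a finite Green function, and the natural route is to transfer the problem to the already-understood Green function $G$ of $\Omega$ via the $h$-transform identity. Recall from the relation $p(t,x,y) = h(x)h(y)p_h(t,x,y)$ that, integrating in $t$, the Green function $G_{\Omega,h}$ of $(\Omega, \sigma h^2)$ satisfies $G(x,y) = h(x)h(y) G_{\Omega,h}(x,y)$; since $h$ is positive and finite on $\Omega$ and $G(x,y)<+\infty$ (as $\partial\Omega\neq\emptyset$ makes $\Omega$ non-parabolic, Appendix \ref{parabolic}), $G_{\Omega,h}$ is finite, so $(\Omega, \sigma h^2)$ is non-parabolic. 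The issue is that $(U_i^*, \sigma h^2)$ is not $(\Omega, \sigma h^2)$ but an end of it with \emph{Neumann} boundary condition imposed on the inner boundary $\inner U_i^*$, so I must pass from non-parabolicity of the whole space to non-parabolicity of this modified end.

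Here is the sequence of steps. First, establish the $h$-transform identity for Green functions, $G(x,y)=h(x)h(y)G_{\Omega,h}(x,y)$, and conclude $(\Omega,\sigma h^2)$ is non-parabolic. Second, observe that $(U_i^*, \sigma h^2)$ with Neumann condition on $\inner U_i^*$ is a manifold with nonempty boundary — indeed $\partial E_i = \inner U_i^*$ is a compact nonempty codimension-one boundary component that is \emph{not} part of the Dirichlet data (it carries Neumann condition). A complete Riemannian manifold (or one with Neumann boundary) that has a compact "hole" removed, with Dirichlet condition on that hole, is automatically non-parabolic; but here the structure is the opposite. The cleaner argument: use that $(U_i^*, \sigma h^2)$ is a Harnack manifold (Proposition \ref{h-trans-Harnack-ends}), so by the characterization of parabolicity on Harnack manifolds (e.g. via the volume-growth criterion, $(U_i^*,\sigma h^2)$ is non-parabolic iff $\int^\infty \frac{r\,dr}{V_h(o,r)}<\infty$ where $V_h$ is the weighted volume), it suffices to compare weighted volume growth. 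Third — and this is the substantive step — by Theorem \ref{HarnackEnds}/\cite[Theorem 4.17]{GyS} we have $V_h(x,\sqrt t)\approx [h(x_{\sqrt t})]^2 V(x,\sqrt t)$, and since $h\approx u_i$ on $U_i^*$ (Theorem \ref{h-u_i-comp}), the weighted volume growth of $(U_i^*,\sigma h^2)$ matches that of $(U_i, \sigma u_i^2)$. Now $(U_i, \sigma u_i^2)$ is a Harnack manifold (C4) whose Green function is $G_{U_i}(x,y)/(u_i(x)u_i(y))$, which is finite because $G_{U_i}$ is finite (the ends $U_i$ are non-parabolic since $\inner U_i\neq\emptyset$, as noted before Theorem \ref{green-ends}); hence $(U_i,\sigma u_i^2)$ is non-parabolic, so its volume-growth integral converges, and therefore so does that of $(U_i^*,\sigma h^2)$. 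Fourth, conclude that $(U_i^*,\sigma h^2)$ is non-parabolic.

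An alternative, perhaps slicker, fourth-step packaging avoids the volume criterion entirely: since $(U_i,\sigma u_i^2)$ is non-parabolic it has a finite Green function; $(U_i^*, \sigma u_i^2)$ differs from $(U_i,\sigma u_i^2)$ by adjoining the compact Neumann boundary piece $\inner U_i^*$ and throwing away a compact set, which preserves non-parabolicity (gluing a compact piece to a non-parabolic Harnack manifold keeps it non-parabolic — this is the stability of (non)parabolicity under compact perturbations, used throughout \cite{GS3}); finally $h\approx u_i$ on $U_i^*$ means the Dirichlet forms of $(U_i^*,\sigma h^2)$ and $(U_i^*, \sigma u_i^2)$ are comparable (the measures and energies differ by bounded factors), and comparability of Dirichlet forms preserves non-parabolicity. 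Either way the conclusion follows.

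The main obstacle is the third step: correctly relating the weighted volume growth (or Green function) of the $h$-transformed end to that of the $u_i$-transformed end, which requires the volume comparison $V_h\approx [h(x_{\sqrt t})]^2V$ together with $h\approx u_i$, and being careful that the inner boundary of $U_i^*$ carries Neumann rather than Dirichlet condition — so one genuinely needs the stability of non-parabolicity under compact perturbations rather than a naive monotonicity (Green functions do \emph{not} decrease when one enlarges the Neumann boundary). The comparability $h\approx u_i$ on $U_i^*$ (from Theorem \ref{h-u_i-comp}, after choosing $K^*$ large enough) is what makes all the comparisons go through.
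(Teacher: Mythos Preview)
Your argument is correct, but it proceeds along a genuinely different route from the paper's. The paper exhibits non-parabolicity via the criterion ``there exists a non-constant positive superharmonic function'': the candidate is $1/h$, which under the $h$-transform correspondence is harmonic in the interior of $U_i^*$ and has vanishing normal derivative on $\delta U_i^*\cap\delta U_i$, but not on $\inner U_i^*$; the paper then argues that $1/h$ can be modified on a compact neighborhood of $\inner U_i^*$ to a positive superharmonic function, and that if this modification happened to be constant then $u_i$ would itself be essentially constant, forcing $\overline{U_i}$ to have been non-parabolic already. Your route instead exploits that $(U_i^*,\sigma h^2)$ is Harnack (Proposition~\ref{h-trans-Harnack-ends}) so that non-parabolicity reduces to the volume-integral test~(\ref{H-par}), and then transfers convergence of that integral from the known non-parabolic Harnack manifold $(U_i,\sigma u_i^2)$ via $h\approx u_i$ on $U_i^*$ and the fact that $U_i^*$ and $U_i$ differ only by a compact set. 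Your alternative packaging (comparable Dirichlet forms plus stability under compact perturbation) is equally valid. Both arguments ultimately rely on the same ingredients --- $h\approx u_i$ and the Harnack property --- but yours is more quantitative and avoids the somewhat delicate superharmonic extension step, at the cost of invoking the volume characterization. One small remark: your opening observation that $(\Omega,\sigma h^2)$ itself is non-parabolic, via $G_{\Omega,h}=G/(h\cdot h)$, is correct but never used in what follows; the work is entirely in the ends.
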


\begin{proof}
One of the equivalent definitions of non-parabolicity is that the space $(U_i^*, \sigma h^2)$ possesses a non-constant, positive superharmonic function \cite{GSurv}. Such a function must satisfy Definition B.1 where we consider $U_i^*$ as an open subset of $\overline{U}_i^*,$ but with the equality replaced by a less than or equal to. In other words, we need a smooth function $u$ such that $\Delta_{U_i^*,\, h^2\sigma} \, u \leq 0$ in the (geometric) interior of $U_i^*$ and $u$ has vanishing normal derivative along the boundary points of $U_i^*$ as a manifold. 

Consider the function $1/h$ on $(U_i^*, \sigma h^2).$ By the correspondence between  $L^2(U_i^*, \sigma)$ and $L^2(U_i^*, \sigma h^2),$ $1/h$ is harmonic in the geometric interior of $U_i^*.$ It also has vanishing normal derivative on $\delta U_i^* \cap \delta U_i,$ since this is true of $h.$ However, $h$ does not have vanishing normal derivative on the inner boundary of $U_i^*,$ so $1/h$ is not itself superharmonic. Nonetheless, $1/h$ is a local harmonic function in $U_i^*$ in the desired sense outside of a compact set containing $\inner U_i^*$, so $1/h$ can be extended to a positive superharmonic function in $U_i^*.$ Since $h$ behaves like $u_i,$ if this modified version of $1/h$ is constant, then $u_i$ must be relatively constant on $U_i^*$. However, if this is the case, then $\overline{U_i}$ must have been non-parabolic to start with, so $U_i^*$ must also be non-parabolic.

\end{proof}

We now come to the main result of this paper. In Theorem \ref{main_thm} below, all notation will be as in Theorem \ref{HK-ends-estimate} where subscripts $h$ indicate we have applied the theorem to the manifold $(\Omega, h^2 \sigma).$ For explicit examples of the estimates in Theorem \ref{main_thm}, see Section \ref{exs}. 

 Before the main result, we recall some notation (see also Theorem \ref{HK-ends-estimate}). For $x \in \Omega,$ the notation $i_x$ indicates to select the index $i$ such that $x$ belongs to the end $U_i.$ If $x \in K,$ we set $i_x = 0$. Also let $|x| := \sup_{y \in K} d(x,y).$ The distance $d_+$ indicates distance passing through the compact middle $K,$ whereas $d_\emptyset$ indicates distance avoiding $K.$ We will also need the following two functions involving the $h$-transform.

\begin{defin}
If $B_i(x,r)$ denotes a ball in $U_i$ centered at $x$ with radius $r$ and $o_i$ is a fixed reference point on $\inner U_i, \ 1 \leq i \leq k,$ then
\begin{equation}\label{V_i_h}V_{i,h}(r) = V_{i,h}(o_i,r) = \mu_{i,h}(B_i(o_i, r)) := \int_{B_i(o_i,r)} h^2(x) \sigma_i(x) dx\end{equation}
and $V_{0,h}(r) = \min_{1 \leq i \leq k} V_{i,h}(r).$

Furthermore, we set
\begin{equation}\label{H_h}
 H_h(x,t) = \min \Bigg\{ 1, \frac{|x|^2}{V_{i_x,h}(|x|)} + \Big(\int_{|x|^2}^t \frac{ds}{V_{i_x,h}(\sqrt{s})}\Big)_+ \Bigg\}.
 \end{equation} 
\end{defin}

\begin{thm}\label{main_thm}
Let $(M, \sigma)$ be a weighted complete Riemannian manifold with boundary such that $M= M_1 \# \cdots \# M_k.$ Let $\Omega \subset M$ be open such that $\partial \Omega = M \setminus \Omega \subset \delta M$ satisfies condition (*) and assume $\partial \Omega \not = \emptyset.$ 

Let $K$ be a compact set such that $M = K \sqcup (E_1 \sqcup \cdots \sqcup E_k)$ and $U_i = \Omega \cap E_i, \ 1 \leq i \leq k.$ Assume (H1) and (H2) so that $(M_i, \sigma_i),\ 1 \leq i \leq k$ are Harnack manifolds and each $U_i$ is uniform in $M_i.$ Let $h$ denote a profile for $\Omega$ as constructed in Theorem \ref{thm-profile}. 

Then for any $t > 1, \ x,y \in \Omega,$ we have the estimate 
\begin{align*}
p&(t,x,y) \approx \ Ch(x) h(y) \Bigg[\frac{1}{\sqrt{V_{i_x,h}(x,\sqrt{t})V_{i_y,h}(y, \sqrt{t})}} \exp \Big(-c\frac{d^2_\emptyset(x,y)}{t}\Big) \\
&+\Bigg(\frac{H_h(x,t) H_h(y,t)}{V_{0,h}(\sqrt{t})} + \frac{H_h(y,t)}{V_{i_x,h}(\sqrt{t})} + \frac{H_h(x,t)}{V_{i_y,h}(\sqrt{t})} \Bigg) \, \exp \Big( -c \frac{d_+^2(x,y)}{t}\Big)\Bigg],
\end{align*}
where the constants $C,c$ are different in the upper and lower bounds.
\end{thm}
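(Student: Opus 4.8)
The plan is to reduce the estimate on $p(t,x,y)$ to an estimate on the $h$-transformed heat kernel $p_h(t,x,y)$ via the identity $p(t,x,y) = h(x)h(y)p_h(t,x,y)$ recorded above, and then to apply the general connected-sum heat kernel theorem, Theorem~\ref{HK-ends-estimate}, to the weighted manifold $(\Omega, h^2\sigma)$. So the real content is to check that $(\Omega, h^2\sigma)$ satisfies every hypothesis of Theorem~\ref{HK-ends-estimate}. First I would decompose $\Omega = K^* \sqcup U_1^* \sqcup \cdots \sqcup U_k^*$ as in the ``$h$-transform space'' subsection, putting Neumann condition on $\inner U_i^*$, so that $(\Omega, h^2\sigma)$ is genuinely presented as a connected sum of manifolds with boundary (now carrying only Neumann condition, the Dirichlet boundary having been ``hidden'' by the $h$-transform since $h$ vanishes there). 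Then I would invoke Proposition~\ref{h-trans-Harnack-ends} to see each $(U_i^*, \sigma h^2)$ is a Harnack manifold and Proposition~\ref{h-trans-nonparabolic} to see each is non-parabolic; these are exactly the per-end hypotheses needed for Theorem~\ref{HK-ends-estimate}.

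Next I would verify the volume and geometry bookkeeping: the functions $V_{i,h}$ and $H_h$ defined in \eqref{V_i_h}--\eqref{H_h} are precisely the instances, for $(\Omega, h^2\sigma)$, of the abstract quantities appearing in Theorem~\ref{HK-ends-estimate}, and the distances $d_\emptyset, d_+$ and the index functions $i_x, |x|$ are unchanged by the $h$-transform since it does not alter the metric $g$, only the measure. One also needs that the ``central'' region $K^*$ is well-behaved (compact, with the connectedness/escape properties that make the connected-sum machinery apply), and that any residual technical hypotheses of Theorem~\ref{HK-ends-estimate} — e.g.\ that the gluing is along a compact hypersurface, or relative connectedness conditions — follow from (H1), (H2), (C1)--(C4) and condition~(*). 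Having confirmed all hypotheses, Theorem~\ref{HK-ends-estimate} yields exactly the bracketed expression with $p_h$ in place of $p$ and $V_{i,h}, H_h, V_{0,h}$ as written; multiplying through by $h(x)h(y)$ gives the stated two-sided estimate for $p(t,x,y)$, with the usual caveat that $C, c$ differ between the upper and lower bounds.

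The main obstacle I expect is not any single hard estimate but rather the careful matching of frameworks: Theorem~\ref{HK-ends-estimate} is (presumably) stated for a connected sum of \emph{complete} Harnack manifolds glued along their compact boundaries, whereas $(U_i^*, \sigma h^2)$ is a manifold with boundary carrying mixed data (Neumann on $\side U_i^* = \delta U_i^* \cap \delta U_i$, Neumann on $\inner U_i^*$ by fiat, and $h$ vanishing on $\partial U_i^* \subset \partial\Omega$ — which, because $h$ is the $h$-transform function, translates into no boundary condition at all, i.e.\ a genuine interior-type boundary after transform). Checking that $W_0^1(U_i^*, h^2\sigma)$ and the associated Dirichlet form are the correct ones — so that Proposition~5.8 of \cite{GyS} indeed places $\mathrm{Lip}_c(\overline{U}_i^*)$ in $W_0^1(U_i^*, h^2\sigma)$ and the heat semigroups genuinely match — is the delicate point, and it is essentially what Propositions~\ref{h-trans-Harnack-ends} and~\ref{h-trans-nonparabolic} are designed to handle. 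A secondary subtlety is the non-parabolicity requirement and the behavior of $H_h$ when some $M_i$ is parabolic: one must make sure that the dichotomy (E1)/(E2) from Theorem~\ref{green-ends}, which controls how $h$ compares with $u_i$, is compatible with the volume-growth hypotheses under which Theorem~\ref{HK-ends-estimate} produces matching bounds, and that the $H_h$ defined via $V_{i,h}$ correctly captures the ``time spent in the center'' contribution in all these regimes.

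Given these checks, the proof is short: assemble the verified hypotheses, cite Theorem~\ref{HK-ends-estimate} applied to $(\Omega, h^2\sigma)$, read off the estimate for $p_h$, and conjugate by $h(x)h(y)$.
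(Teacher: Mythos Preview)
Your proposal is correct and follows essentially the same route as the paper: verify via Propositions~\ref{h-trans-Harnack-ends} and~\ref{h-trans-nonparabolic} that the ends $(U_i^*, \sigma h^2)$ are Harnack and non-parabolic, use Proposition~5.8 of \cite{GyS} to place $\mathrm{Lip}_c(M)$ into $W_0^1(\Omega,\sigma h^2)$, apply Theorem~\ref{HK-ends-estimate} to $(\Omega,\sigma h^2)$ to obtain the estimate for $p_h$, and then multiply by $h(x)h(y)$. The paper's proof is exactly this, stated in two sentences; your extended discussion of the framework-matching and (E1)/(E2) dichotomy is useful commentary but not additional argument.
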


\begin{proof}
By Propositions \ref{h-trans-Harnack-ends} and \ref{h-trans-nonparabolic}, the ends $U_i^*$ are non-parabolic and Harnack in the sense of Theorem \ref{HarnackNotComplete}. Moreover, the restriction of any Lipschitz function with compact support in $M$ will lie in $W_0^1(\Omega, \sigma h^2)$ by Proposition 5.8 of \cite{GyS} since $h$ is a positive harmonic function in $\Omega,$ vanishing on $\partial \Omega.$

Therefore applying Theorem \ref{HK-ends-estimate} to $(\Omega, \sigma h^2),$ with ends $U_i^*, \ 1 \leq i \leq k,$ gives an estimate for  $p_h(t,x,y).$ The theorem follows once we recall $p(t,x,y) = h(x) h(y) p_h(t,x,y).$
\end{proof}

\begin{rem}We now indicate how to compute some of the quantities in Theorem \ref{main_thm} in practice. In fact, all such quantities can be computed based solely on information about the ends $x$ and $y$ belong to \emph{except} for the quantity $V_{0,h}(r).$ 

By Theorem \ref{h-u_i-comp}, in each end $U_i,$ the profile $h$ of $\Omega$ is comparable to the profile $u_i$ of that end, away from some compact set $K^* \supset K$. Hence we can compute $h$ using these profiles. As $h$ is harmonic, inside of $K^*$ it is roughly constant away from points of $\partial \Omega$ and vanishes linearly as it approaches such points. Frequently, given an end $U_i,$ it may be easier to compute the profile of some set $V_i$ that is close to $U_i$ in the sense that their difference is a compact subset of $K^*$. Using Harnack inequalities and maximum principles as in Section \ref{construct}, we see the profiles of such $U_i, V_i$ are comparable.

A useful technique for computing quantities $V_{i_x,h}(x, \sqrt{t})$ in the theorem above is the use of points $x_{\sqrt{t}},$ which were encountered in the proof of Theorem \ref{green-ends}. The spirit is the same as that of Theorem \ref{HarnackEnds}, which does not directly apply. For any $t>0$ and any point $x \in U_i, \ 1 \leq i \leq k,$ there exists a point $x_{\sqrt{t}} \in U_i$ such that $d(x, x_{\sqrt{t}}) \leq \sqrt{t}/4$ and $d(x_{\sqrt{t}}, M_i \setminus E_i) \geq c_0 \sqrt{t}/8$ for some constant $c_0 >0$ \cite[Lemma 3.20]{GyS}. As the $U_i$'s are uniform, by Theorem 4.17 of \cite{GyS} (or Theorem \ref{HarnackEnds}), we have
\[\int_{B_i(x, \sqrt{t})} u_i^2(y) \sigma_i(y)dy \approx u_i(x_{\sqrt{t}})^2 \, V_i(x, \sqrt{t}) \quad \forall x \in U_i.\]
As $u_i \approx h$ in $U_i \setminus K^*$ for $K^*$ compact, it follows that 
\[ V_{i_x, h}(x, \sqrt{t}) \approx h(x_{\sqrt{t}})^2 V_i(x, \sqrt{t}) \quad \forall x \in U_i.\]

In the simplest examples, the integral in the definition (\ref{H_h}) of $H_h(x,t)$ does not contribute and the computation reduces to 
\[ H_h(x,t) \approx \min \Big\{ 1, \frac{|x|^2}{V_{i_x,h}(|x|)}\Big\} \approx \min \Big\{ 1, \frac{|x|^2}{h^2(x_{|x|})V_{i_x}(|x|)} \Big\}.\]
If $V_{i_x, h}$ grows fast enough, then the second term above is always less than $1$ and the computation simplifies further. See Remark \ref{simplify_H} and (\ref{vol_cond}) for the appropriate condition on the volume.
\end{rem}

Obtaining heat kernel estimates for small times $t\leq1$ is much simpler and follows from the fact that the parabolic Harnack inequality (Definition \ref{PHI}) holds for small scales in $(\Omega, \sigma h^2).$
 
\begin{thm}\label{small-times}
Under the hypotheses of Theorem \ref{main_thm}, for any $0<t\leq1$ and $x,y \in \Omega,$
\[ p(t,x,y) \approx h(x) h(y) \frac{1}{V_h(x, \sqrt{t})} \exp \Big(-c \frac{d(x,y)^2}{t}\Big),\]
where $V_h$ denotes the volume in $(M, \sigma h^2)$ and $d$ denotes distance in $M.$
\end{thm}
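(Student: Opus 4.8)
The strategy is to transfer the small-time heat kernel estimate from the well-understood local geometry of $(\Omega,\sigma h^2)$ to $(\Omega,\sigma)$ via the $h$-transform identity $p(t,x,y)=h(x)h(y)p_h(t,x,y)$, exactly as in the proof of Theorem~\ref{main_thm}. So the entire content is a small-time two-sided Gaussian estimate for $p_h$ on $(\Omega,\sigma h^2)$, and the key observation is that such an estimate is purely local: it follows from a parabolic Harnack inequality valid at \emph{small scales only}, together with volume doubling and a Poincar\'e inequality at small scales. First I would record that $(\Omega,\sigma h^2)$, as a weighted manifold with the mixed (now purely Neumann, after the transform) boundary structure on $\delta\Omega$ and with the Dirichlet part $\partial\Omega$ hidden, has the property that on any geodesic ball of radius $\le 1$ (say) it looks like a fixed bounded piece of a smooth weighted Riemannian manifold with Neumann boundary: indeed $h$ is smooth and bounded away from $0$ and $\infty$ on any fixed compact set of $\Omega^\bullet$, and near $\partial\Omega$ the transform is precisely designed to remove the Dirichlet condition (Proposition~5.8 of \cite{GyS}). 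Hence small-scale volume doubling and the small-scale Neumann-type Poincar\'e inequality hold on $(\Omega,\sigma h^2)$; these are local statements and are stable under the bounded perturbation of the weight by $h^2$.

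Next I would invoke the equivalence, due to Grigor'yan and Saloff-Coste (and Sturm), between the conjunction of volume doubling and Poincar\'e inequality \emph{at scales $\le r_0$} on one hand and the parabolic Harnack inequality \emph{at scales $\le r_0$} on the other, which in turn is equivalent to the two-sided Gaussian estimate
\[
p_h(t,x,y)\approx \frac{1}{V_h(x,\sqrt t)}\exp\!\Big(-c\frac{d(x,y)^2}{t}\Big)
\]
for $0<t\le r_0^2$; see Definition~\ref{PHI} and the relevant appendix (Appendix~\ref{Harnack}) which the paper has already set up for exactly this purpose. Taking $r_0=1$ (any fixed positive scale works, and the constant $1$ is just a normalization; the implied constants absorb the change) yields the estimate for $0<t\le 1$. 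One should be slightly careful that the manifold with boundary $\overline\Omega$ (including the inner-boundary pieces $\inner U_i$ that we treat with Neumann condition and the Dirichlet part that has been transformed away) is, at small scales, uniformly comparable to a standard local model: this is where condition~(*) and the smoothness hypotheses on $\delta M$ and $\partial\Omega$ enter, ensuring there is no accumulation of boundary components at small scales and that every point has a neighborhood that is a bounded, smooth, weighted Riemannian manifold-with-boundary chart with Neumann condition.

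Finally I would substitute back: with $V_h(x,\sqrt t):=\mu_{h}(B_\Omega(x,\sqrt t))=\int_{B_\Omega(x,\sqrt t)}h^2\,d\mu$ and $d=d_\Omega$ the geodesic distance in $M$ (equivalently in the completion), the identity $p(t,x,y)=h(x)h(y)p_h(t,x,y)$ gives exactly the claimed bound, with the constant $c$ (and the implicit multiplicative constants) differing between the upper and lower bounds as usual. The main obstacle, such as it is, is not analytical depth but bookkeeping: one must verify carefully that the small-scale parabolic Harnack inequality genuinely holds \emph{up to and including} all boundary points of $\Omega$ as a manifold-with-boundary under $\sigma h^2$ --- both the ordinary Neumann boundary $\delta\Omega$, where $h$ is smooth with nonvanishing data and the perturbation is harmless, and the former Dirichlet boundary $\partial\Omega$, where one must appeal to the fact (Proposition~5.8 of \cite{GyS}, already used in Proposition~\ref{h-trans-Harnack-ends}) that $\mathrm{Lip}_c$ functions lie in $W_0^1(\Omega,\sigma h^2)$, so that the form on $(\Omega,\sigma h^2)$ is the \emph{Neumann} form and the small-scale local Sobolev/Poincar\'e machinery applies without obstruction. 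Once that is in hand the proof is a two-line citation.
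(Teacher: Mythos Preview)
Your proposal is correct and follows essentially the same approach as the paper: reduce to a small-time Gaussian estimate for $p_h$ via the $h$-transform identity, then obtain that estimate from the small-scale parabolic Harnack inequality on $(\Omega,\sigma h^2)$. The only difference is that the paper justifies the small-scale PHI more economically, by observing that $(\Omega,\sigma h^2)$ is a connected sum of the Harnack manifolds already produced in Proposition~\ref{h-trans-Harnack-ends} and invoking \cite[Lemma~5.9]{GS5}, rather than re-deriving small-scale doubling and Poincar\'e from local charts as you outline.
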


\begin{proof}
Since $(\Omega, \sigma h^2)$ is a connected sum of the Harnack manifolds $(\Omega_i, \sigma h^2),$ the parabolic Harnack inequality holds up to scale $r_0$ for any $r_0>0$ as in \cite[Lemma 5.9]{GS5}. Thus for any $0<t<1, \ x,y \in \Omega,$
\[ p_h(t,x,y) \approx \frac{1}{V_h(x, \sqrt{t})} \exp \Big(-c \frac{d(x,y)^2}{t}\Big) \]
and the result follows from the relation between $p(t,x,y)$ and $p_h(t,x,y).$
\end{proof}

\begin{rem}
In fact, the estimate in Theorem \ref{small-times} can be replaced by that in Theorem \ref{main_thm} as is explained in \cite{GS5}.
\end{rem}

\section{Examples}\label{exs}

\begin{example}
Suppose $M$ is a connected sum of three cones in $\mathbb{R}^2$ with apertures $\alpha_1, \alpha_2, \alpha_3 \in [0, 2\pi)$ such that $\alpha_1 + \alpha_2 + \alpha_3 < 2\pi.$ (While we should round the corners to stay in the category of smooth manifolds, this changes nothing significant.) For simplicity, we assume that the vertex of each cone of positive aperture is the origin. We consider $\Omega \subset M$ that encodes boundary conditions on these cones; for each cone of positive aperture, we assign one of the following three boundary conditions: either both sides of the cone carry Neumann boundary condition, both sides carry Dirichlet boundary condition, or one side carries each boundary condition. A cone of zero aperture is represented by a strip with Neumann condition on both sides. A typical example of this situation in found in Figures \ref{fig:3cones} and \ref{fig:3cones2}.

\begin{figure}[h]\begin{center}\begin{tikzpicture}[scale=1] 
\draw[blue, ultra thick] (.7071, .7071)--(2,2);
\draw[blue, ultra thick] (-.7071, .7071)--(-2,2);
\draw[blue, dashed, ultra thick] (.7071, .7071)--(0,0);
\draw[blue,dashed, ultra thick] (-.7071, .7071)--(0,0);

\draw[red,ultra thick] (-.966, .259)--(-3.5, .938);
\draw[blue,ultra thick] (-.866, -.5)--(-3, -1.723);
\draw[red, dashed, ultra thick] (-.966, .259)--(0, 0);
\draw[blue, dashed, ultra thick] (-.866, -.5)--(0, 0);

\draw[red, ultra thick] (.866,.5)--(3.5,.5);
\draw[red, ultra thick] (.866,-.5)--(3.5,-.5);

\draw[dashed,ultra thick] (0,0) circle (1);
\node at (0,0)[anchor=north] {$(0,0)$};
\draw[fill=black] (0,0) circle (.05);

\node at (0,1)[anchor=south] {$\alpha_1$};
\draw[ultra thick] (.7071,.7071) arc (45:135:1); 
\draw[ultra thick] (-.966,.259) arc (165:210:1);
\node at (-1,-.2)[anchor=east] {$\alpha_2$};
\node at (1.5,0)[anchor=west] {$\alpha_3 = 0$};
\end{tikzpicture}
\end{center}\caption{An example of a connected sum of three cones whose vertices lie at the origin and which are placed around the unit circle.}\label{fig:3cones}\end{figure}
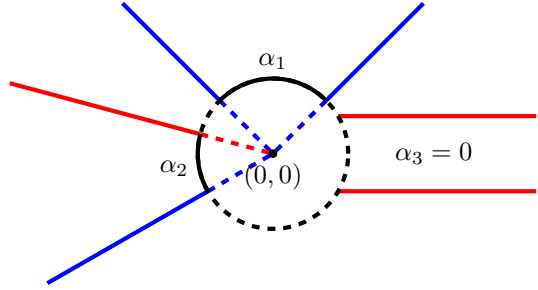

\begin{figure}[h]\begin{center}
\begin{tikzpicture}[scale=.8] 
\draw[blue, ultra thick] (.7071, .7071)--(2,2);
\draw[blue, ultra thick] (-.7071, .7071)--(-2,2);
\draw[red,ultra thick] (-.966, .259)--(-3.5, .938);
\draw[blue,ultra thick] (-.866, -.5)--(-3, -1.723);
\draw[red, ultra thick] (.866,.5)--(3.5,.5);
\draw[red, ultra thick] (.866,-.5)--(3.5,-.5);

\draw [ultra thick, red] (.866,.5) arc (30:45:1);
\draw [ultra thick, blue] (-.7071,.7071) arc (135:165:1);
\draw [ultra thick, blue] (-.866,-.5) arc (210:330:1);

\path[fill, red, opacity =.1] (.7071, .7071)--(2,2)--(-2,2)--(-.7071, .7071) arc (135:165:1) --(-.966,.259)--(-3.5, .938)--(-3, -1.723)--(-.866,-.5) arc (210:330:1)--(.866, -.5)--(3.5,-.5)--(3.5,.5)--(.866,.5) arc (30:45:1);

\node at (-2.7,.1) {$\Omega$};

\end{tikzpicture}\end{center}\caption{The manifold $\Omega$ associated with Figure \ref{fig:3cones}.}\label{fig:3cones2}\end{figure}
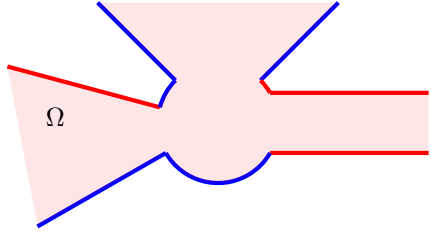

The above six pieces of information (the three apertures of the cones and what boundary conditions they carry on their sides) are all that is necessary to determine the behavior of $p(t,o,o)$ in such domains (where naturally we take $o$ to be the origin). 

We will assume at least one cone has some Dirichlet boundary condition to ensure that $\Omega$ is non-parabolic. If there exists a cone of positive aperture with Neumann condition on both sides, then for $t >1,$
 $$ c_o(t\log^2 t)^{-1} \le  p(t,o,o)\le C_o(t\log^2 t)^{-1} .$$
 
Let $U_1, U_2, U_3$ denote the ends of $\Omega$ with respect to the closure of the unit disk, and let $V_1, V_2, V_3$ denote the actual cones.  Consider the map $A: \{ V_1, V_2, V_3\}$ given by
\[ V_i \mapsto \begin{cases} \frac{3}{2}, & \alpha_i = 0 \\ 1 + \frac{\pi}{\alpha_i}, & \alpha_i >0 \text{ and the cone has Dirichlet boundary condition} \\ 1+ \frac{\pi}{2\alpha_i}, & \alpha_i>0 \text{ and the cone has both boundary conditions}  \end{cases} \]
for $i=1,2,3.$

Then for $t>1,$
\[ c_0 t^{-a} \leq p(t,o,o) \leq C_o t^{-a},\]
where 
\[ a = \min\{ A(V_1), A(V_2), A(V_3)\}.\]

This naturally generalizes for any finite number of cones. Furthermore, the requirement $\sum_{i=1}^3 \alpha_i < 2\pi$ can be removed by moving the vertices of the cones farther away from the origin so that each cone takes up less arc length of the unit circle or by noting that the resulting manifold need not be embedded in the plane.

With slightly more information, we can give more precise estimates on $p(t,x,y)$ for any $t > 1, \ x,y \in \Omega.$ For simplicity of notation, we will assume all cones have positive aperture and Dirichlet boundary condition on both sides. Let $\phi_i$ denote the angle between the positive $x$-axis and the edge of the cone such that when continuing counter-clockwise from this edge, we lie inside of the cone $V_i, \ 1 \leq i \leq 3.$ (The the other edge of the cone is at angle $\phi_i + \alpha_i$ as measured from the positive $x$-axis; note $\phi_i$ maybe be negative.) In polar coordinates, the profile of a cone with aperture $\alpha$ and edge at angle $\phi$ as above with Dirichlet boundary condition on both sides is given by $h_{cone}(r, \theta) = r^{\pi/\alpha} \, \sin\big(\frac{\pi}{\alpha}(\theta - \phi)\big).$ (In the case of cones with Dirichlet condition on one side and Neumann condition on the other side, $h_{cone}(r, \theta) = r^{\pi/2 \alpha} \, \sin\big(\frac{\pi}{2 \alpha}(\theta - \phi)\big)$ if the first edge has Dirichlet condition and a similar formula holds if the first edge instead carries Neumann condition.) The desired estimate depends on whether or not the points $x,y$ lie in the same end $U_i.$ 

Let $x = (|x|, \theta_x), y = (|y|, \theta_y)$ denote $x,y$ written in polar coordinates. Previously, $|x|$ was defined as $\sup_{y \in K} d(x,y)$ to be bounded below away from zero. Below, taking $|x| = d(0,x)$ as is needed for polar coordinates will not be a problem since in all such instances the point $x$ lies in $U_i, \ 1 \leq i \leq 3,$ and hence $|x| \geq 1.$ Above, we have already seen what occurs if both points lie in the middle (and away from any Dirichlet boundary) by examining $p(t,o,o).$ We have the following further cases, where we continue to assume $t>1$:

 \noindent\emph{Case 1:} Suppose $x$ and $y$ are in different ends; without loss of generality assume $x \in U_1, y \in U_2.$ Then 
\begin{align*}
p(t,&x,y) \approx \sin\big(\frac{\pi}{\alpha_1}(\theta_x - \phi_1)\big) \sin\big(\frac{\pi}{\alpha_2}(\theta_y - \phi_2)\big) \cdot \\ &\Bigg[ \frac{1}{ t^{\frac{\pi}{\alpha_*} + 1} \, |x|^{\frac{\pi}{\alpha_1}} \, |y|^{\frac{\pi}{\alpha_2}}} + \frac{|x|^{\frac{\pi}{\alpha_1}}}{t^{\frac{\pi}{\alpha_1} +1} \, |y|^{\frac{\pi}{\alpha_2}}} + \frac{|y|^{\frac{\pi}{\alpha_2}}}{t^{\frac{\pi}{\alpha_2} + 1} \, |x|^{\frac{\pi}{\alpha_1}}} \Bigg] \exp\Big(-c\, \frac{d_+^2(x,y)}{t}\Big),
\end{align*}
where $\alpha^* = \max_{1\leq i \leq 3} \alpha_i.$
For fixed $x,y,$ we obtain the same decay rate as above for $p(t,o,o),$ and if $|x| \approx |y| \approx \sqrt{t},$ then $p(t,x,y)$ decays like 
\[\sin\Big(\frac{\pi}{\alpha_1}(\theta_x - \phi_1)\Big) \sin\Big(\frac{\pi}{\alpha_2}(\theta_y - \phi_2)\Big) t^{-\frac{\pi}{2\alpha_1} -\frac{\pi}{2\alpha_2}-1}.\]

\noindent \emph{Case 2:} Suppose $x, y$ are in the same end, $U_1.$ Then
\begin{align*}
p(&t,x,y)\approx  \sin\Big(\frac{\pi}{\alpha_1}(\theta_x - \phi_1)\Big)  \sin\Big(\frac{\pi}{\alpha_1}(\theta_y - \phi_1)\Big) \cdot \\ 
&\phantom{s} \Bigg(
\frac{|x|^{\frac{\pi}{\alpha_1}}|y|^{\frac{\pi}{\alpha_1}}}{t \,  h(x_{\sqrt{t}})h(y_{\sqrt{t}})} \exp\Big(-c \frac{d_\emptyset^2(x,y)}{t}\Big)  \\
&\hphantom{s}+ 
\bigg[ \frac{1}{t^{\frac{\pi}{\alpha_*}+1} \, |x|^{\frac{\pi}{\alpha_1}} |y|^{\frac{\pi}{\alpha_1}}} +
\frac{|x|^{\frac{\pi}{\alpha_1}}}{t^{\frac{\pi}{\alpha_1}+1}\, |y|^{\frac{\pi}{\alpha_1}}} + \frac{|y|^{\frac{\pi}{\alpha_1}}}{t^{\frac{\pi}{\alpha_1}+1} \, |x|^{\frac{\pi}{\alpha_1}}} \bigg] 
\exp\Big(-c \, \frac{d_+^2(x,y)}{t}\Big)
\Bigg).
\end{align*}
Further, we can compute the quantity $h(x_{\sqrt{t}})$ described following the proof of Theorem \ref{main_thm}. For any $x \in U_1,$
\[ h(x_{\sqrt{t}}) \approx \begin{cases} t^{\frac{\pi}{\alpha_1}}, & \text{ if } 1 \leq |x|^2 \leq t \\ |x|^{\frac{2\pi}{\alpha_1}} \sin^2\Big(\frac{\pi}{\alpha_1}\big(\theta_x + \frac{\sqrt{t}}{|x|}-\phi_1\big))\Big), & \text{ if } 1 \leq t \leq |x|^2.\end{cases}\]

\noindent \emph{Case 3:} Suppose one point lies in the middle; assume this point is $o,$ the origin. The other point $x$ lies in some end, say $U_1.$ Then
\begin{align*}
p(t,o,x) \approx \sin\Big(\frac{\pi}{\alpha_1}(\theta_x - \phi_1)\Big) \bigg[ \frac{1}{t^{\frac{\pi}{\alpha_*}+1}\, |x|^{\frac{\pi}{\alpha_1}}} + \frac{|x|^{\frac{\pi}{\alpha_1}}}{t^{\frac{\pi}{\alpha_1}+1}} \bigg] \exp\Big(-c \frac{|x|^2}{t}\Big).
\end{align*}

\end{example}

\begin{example}
The previous example of unions of cones can also be considered in dimensions other than two. In general, a cone is a subset of $\mathbb{R}^n$ of the form $U = \mathbb{R}_+ \times \Sigma,$ where $\Sigma$ is a subset of $\mathbb{S}^{n-1},$ the $(n-1)$-dimensional unit sphere. If $\Sigma$ has smooth boundary, then $U$ is uniform in $\mathbb{R}^n.$

The profile for such a cone $U$ with Dirichlet boundary condition everywhere (see \cite{BSm, GyS}) is given by 
\[ h_U(x) = |x|^\alpha \phi\bigg(\frac{x}{|x|}\bigg),\]
where $\lambda$ is the first Dirichlet eigenvalue of the spherical Laplacian, $\phi$ is its corresponding eigenfunction, and 
\begin{equation}\label{alpha} \alpha = \frac{\sqrt{(n-2)^2 + 4 \lambda} - (n-2)}{2}.\end{equation}

If we take a union of such cones with Dirichlet boundary condition everywhere, smoothing corners as necessary, then Theorem \ref{main_thm} applies. Everything is as in the previous two-dimensional example, except we may now be unable to compute $\alpha$ and $\phi.$

In particular, consider a union of $k$ such cones, all with Dirichlet boundary condition. Define a map $A$ on the ends $U_1, \dots, U_k$ corresponding to the cones by $A(U_i) = n/2+ \alpha_i,$ where $\alpha_i$ is given by (\ref{alpha}) and indicates the power of $|x|$ appearing in $h_{U_i}$. Then, as above, for all $t>1$,
\[ c_0 t^{-a} \leq p(t,o,o) \leq C_0 t^{-a},\]
where $a = \min \{ A(U_1), \dots A(U_k)\}.$ Here $o$ is a fixed point in $M$ and the constants $c_0,C_0$ depends on $o$. Theorem \ref{main_thm} gives a two-sided estimate over all $t>1$ and $x,y\in \Omega$, but it is more complicated to write down explicitly.

We can also consider the case $n \geq 3$ where at least one of the cones, say $U_1,$ carries Neumann boundary condition instead of Dirichlet boundary condition. Then $h_{U_1} \approx 1,$ and, for any fixed $o$, there are constants $c_1,C_1$ such that, for all $t>1$, we have
\[ c_1 t^{-n/2} \leq p(t,o,o) \leq C_1 t^{-n/2}.\]
Note this is the same behavior as for $p_{\mathbb{R}^n}(t,o,o).$

\end{example}

\begin{example}
Consider the three-dimensional body given in Figure \ref{fig1} with some Dirichlet boundary condition. With Neumann condition everywhere, this figure was considered by Grigor'yan and Saloff-Coste \cite[Example 6.15]{GS5}. If $p_N(t,x,y)$ denotes the heat kernel for this figure with Neumann boundary (the $N$ in $p_N$ stands for Neumann) everywhere and $o$ is a fixed point, then, for $t>1$,
\[ c_0 (t \log^2 t)^{-1} \leq p_N(t,o,o) \leq  C_0 (t \log^2 t)^{-1}.\]

The most natural place to add Dirichlet boundary condition is on the three dimensional cone. The cone with Dirichlet boundary everywhere has a profile with growth of power $\alpha >0$ by the previous example, so that the volume of the cone weighted by its profile is approximately $r^{2\alpha +3}.$ Thus the volume in the cone grows faster than $r^2 \log^2 r,$ which describes how volume grows after $h$-transform in the infinite solid disk. Hence $p_D(t,o,o)$ has the same long-term decay in time as $p_N(t,o,o).$

In fact, the previous paragraph still holds true when we impose \emph{any} Dirichlet boundary condition on the cone in such a way that condition (*) holds, as the following lemma demonstrates that profiles cannot decrease volume in some sense.

\end{example}

\begin{lem}
Let $(U, \sigma)$ be an unbounded weighted Riemannian manifold that is uniform in its closure $\overline{U},$ which is a Harnack manifold. Let $u$ denote the profile for $U.$ Then there exists $C>0$ such that 
\[ \int_{B_U(x,r)} u^2(y)\sigma(y) dy =: V_{u}(x,r) \geq C V(x,r) \]
for all $x \in \overline{U}$ and all $r>0$ sufficiently large (where $r$ may depend on $x$).
\end{lem}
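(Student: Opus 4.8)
The plan is to show that the profile $u$, being a positive harmonic function vanishing on $\partial U$, cannot be uniformly small on a macroscopic ball, so that the weighted volume $V_u(x,r)$ is controlled from below by a fixed fraction of $V(x,r)$. First I would invoke the key technical tool already used in the proofs of Theorem~\ref{green-ends} and Theorem~\ref{thm-profile}: for any $x \in \overline{U}$ and any $r>0$ there is a point $x_r \in U$ (a ``remote'' point, in the language of \cite{GS4}) with $d(x, x_r) \le r/4$ and $d(x_r, \partial U) \ge c_0 r/8$, coming from \cite[Lemma 3.20]{GyS}. The ball $B_U(x_r, c_0 r/16)$ is then a remote ball sitting inside $B_U(x, r)$ on which the ordinary (interior) elliptic Harnack inequality holds for the non-negative harmonic function $u$, with a constant depending only on the Harnack constant of $\overline{U}$.

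Next I would use uniformity together with Theorem~\ref{HarnackEnds} (equivalently \cite[Theorem~4.17]{GyS}), exactly as in the remark following Theorem~\ref{main_thm}: one has $V_u(x,r) = \int_{B_U(x,r)} u^2 \sigma \approx u(x_r)^2 V(x,r)$, where $x_r$ is a point with $d(x,x_r) \le r/4$ and $d(x_r, \partial U) \ge c_0 r/8$. So it suffices to show $u(x_r)$ is bounded below by a positive constant for all large $r$. Here I would bring in the normalization built into the construction of $u$ in \cite{GyS}: there is a reference point $y^* \in U$ with $u(y^*) = 1$ (or, in the parabolic product-profile case, $v_i(y^*)=1$), and the uniform elliptic boundary Harnack inequality (consequence (C2)) lets one compare $u(x_r)$ to $u$ at a fixed scale. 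Concretely, chaining boundary and interior Harnack inequalities along a Harnack chain of remote and anchored balls of controlled length connecting $x_r$ to a fixed point near $\partial U$ at scale $\approx r$, one gets $u(x_r) \ge c\, u(z_r)$ for a point $z_r$ at distance $\approx r$ from $x$; and since in the non-parabolic case $u$ is bounded below away from zero (cf. the proof of (E2) giving $v_i(x_R) \ge C^{-1}$), while in the parabolic case $u$ even tends to infinity, we always get $u(x_r) \ge c_* > 0$ uniformly in $r$ large.

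Putting these together, $V_u(x,r) \approx u(x_r)^2 V(x,r) \ge c_*^2\, c'\, V(x,r)$ for $r$ large, which is the claim. I would organize the write-up in two steps: (i) reduce to a lower bound on $u(x_r)$ via Theorem~\ref{HarnackEnds} and the existence of remote points; (ii) prove that lower bound by a Harnack chaining argument anchored at the normalization point of $u$, splitting into the cases where $\overline{U}$ is non-parabolic versus parabolic, in parallel with the dichotomy of Theorem~\ref{green-ends}.

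The main obstacle I expect is step (ii) in full generality: one must ensure the lower bound on $u(x_r)$ is genuinely \emph{uniform} in $r$, which requires that the Harnack chain connecting a remote point at scale $r$ back to the normalization point has length bounded independently of $r$ (or at worst a bound that is absorbed because $u$ grows). This is exactly where uniformity of $U$ in $\overline{U}$ (hence (RCA) and the availability of chains of a bounded number of remote/anchored balls of radius $\approx \varepsilon r$, as in (C1)) is indispensable, and one must be slightly careful that the $\varepsilon$-dependent chain length $Q_\varepsilon$ is fixed once and for all. The degenerate subtlety — a cone of zero aperture, or $\overline{U}$ itself parabolic — is handled because there $u \approx 1$ or $u \to \infty$, so the bound is immediate; the only real work is the uniform non-parabolic case, and there the argument mirrors the one already carried out for $v_i$ in the proof of Theorem~\ref{green-ends}(E2).
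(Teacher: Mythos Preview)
Your reduction step (i) is exactly the paper's: both arguments invoke Theorem~\ref{HarnackEnds} (equivalently \cite[Theorem~4.17]{GyS}) to get $V_u(x,r)\approx u(x_r)^2 V(x,r)$, so everything comes down to a uniform lower bound on $u(x_r)$.

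For step (ii), your write-up is more complicated than necessary, and the Harnack-chaining piece as you describe it does not quite work: a chain of remote/anchored balls from $x_r$ back to a \emph{fixed} normalization point $y^*$ has length that grows with $r$ (RCA only bounds chain length between points in the \emph{same} annulus at scale $r$), so the constant would degenerate. What actually saves you---and what you correctly identify at the end when you say the argument ``mirrors the one already carried out for $v_i$''---is not chaining but the sup-over-ball property from the proof of \cite[Theorem~4.17]{GyS}: $u(y)\le C_2\,u(x_r)$ for every $y\in B(x,r)$. Once $r$ is large enough that the ball swallows a point where $u$ is bounded below, you are done in one stroke; no chaining and no parabolic/non-parabolic split is needed.

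The paper does exactly this, with one small twist in how it produces the anchoring point: rather than invoking the normalization $u(y^*)=1$, it argues directly via the maximum principle and Hopf lemma that $u$ cannot tend to zero at infinity, hence there is an $\varepsilon>0$ and a sequence $z_j\to\infty$ with $u(z_j)\ge\varepsilon$. For $r$ large, some $z_j$ lies in $B(x,r)$, and the sup-over-ball inequality gives $u(x_r)\ge \varepsilon/C_2$. Your normalization-point version works equally well; the paper's route just avoids appealing to the specific construction of $u$.
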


\begin{proof}
It is not possible that $u(x) \to 0$ as $x \to \infty$ since if this were the case, the maximum principle combined with the Hopf boundary lemma imply $u\equiv 0.$ Therefore there exists a sequence of points $\{ z_j \}_{j=1}^\infty$ in $U$ and a number $\varepsilon >0$ such that for any fixed point $o \in U,$ $d(z_j, o) \to \infty$ as $j \to \infty$, and $u(z_j) \geq \varepsilon.$

Then by Theorem \ref{HarnackEnds} there exist constants $0< c_0 < +\infty$ and $0 < c_1 \leq C_1 < +\infty$ such that 
\[ c_1 u(x_r)^2 V(x,r) \leq V_{u}(x,r) \leq C_1 u(x_r)^2 V(x,r) \]
for all $x \in \overline{U}, r>0,$ and $x_r$ such that $d(x,x_r) \geq r/4$ and $d(x_r, \partial U) \geq c_0 r/8.$ Moreover, by the proof of Theorem 4.7 in \cite{GyS}, there exists a constant $0<C_2 < +\infty$ such that 
\[u(y) \leq C_2 u(x_r) \quad \forall x \in \overline{U},\ y \in B(x,r), \ r>0.\]

Given $x\in \overline{U},$ let $r>0$ be sufficiently large so that $z_j \in B(x,r)$ for some $j=1,2,\dots.$ Then 
\[ V_{u}(x,r) \geq \frac{c_1}{C_2} u(z_j)^2 V(x,r) \geq \frac{c_1 \varepsilon^2}{C_2} V(x,r)\]
as claimed.

\end{proof}

\begin{example}\label{2parabolas}
Consider two copies of the exterior of a parabola in $\mathbb{R}^2$. Impose Dirichlet condition along each parabola and glue the two copies via a collar, as in Figure \ref{ext_para}. If $K$ indicates the compact collar, then this manifold $\Omega$ has two ends, both of which are the exterior of a parabola in $\mathbb{R}^2,$ minus a disk. 

\begin{figure}[h]\begin{center}
\begin{tikzpicture}[scale=.8] 

\draw[ultra thick] (1,2)--(0,0)--(4,-.5)--(5,1.5);

\draw[blue, ultra thick] (1,2) .. controls (5/2,3/4) .. (5,1.5);
\draw [thick] (2.25,.3) circle (.4);

\draw [ultra thick] (1,-1)--(0,-3)--(4,-3.5)--(5,-1.5);
\draw[ultra thick, blue] (1,-1) .. controls (5/2,-9/4) .. (5,-1.5);

\filldraw[dashed, thick, gray, opacity=.4] (2.25,-2.7) circle (.4);
\draw[thick] (1.85,-2.7) arc (180:360:.4);

\draw [thick] (1.85,.3)--(1.85,-2.7);
\draw [thick] (2.65,.3)--(2.65,-2.7);

\path [fill, gray, opacity =.4] (1.85,.3)--(1.85,-2.7) arc (180:0:.4)--(2.65,.3) arc (0:-180:.4);

\node at (.6,.3) {$\Omega$};

\end{tikzpicture}\end{center}\caption{A connected sum of the exterior of two parabolas in $\mathbb{R}^2.$}\label{ext_para}\end{figure}
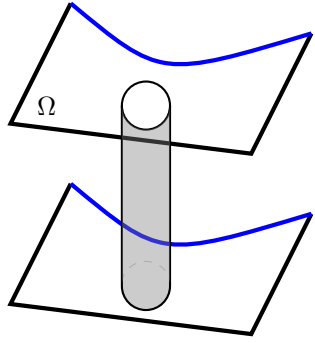

As $\mathbb{R}^2$ minus a parabola is the complement of a convex set, it is uniform in its closure \cite[Proposition 6.16]{GyS}, and removing a disk of fixed radius will not change this property. Further, with Neumann condition along both parabola and disk, this manifold is Harnack \cite[Theorem 3.10]{GyS}. Thus hypotheses (H1) and (H2) are satisfied. 

The global profile $h$ for $\Omega$ will behave like the profile for $\mathbb{R}^2$ minus a parabola and a disk in each end. Denote the profile for the exterior of a parabola by $h_{EP}$. Consider the exterior of the parabola weighted by $h_{EP}^2.$ Then this space is transient as satisfies the parabolic Harnack inequality, so removing the disk, a compact set, has little effect \cite{GS3, GyS}. What is important to us here is that $\hat{h},$ the profile for $\mathbb{R}^2$ minus a parabola and a disk, weighted by $h_{EP}^2,$ is essentially constant away from the disk. As the profile for the ends we are interested in is the product of $h_{EP}$ and $\hat{h},$ it behaves like $h_{EP}$ when away from the disk. Thus the global profile $h$ for $\Omega,$ which appears in Theorem \ref{main_thm}, also behaves like $h_{EP}.$

The profile for the exterior of the parabola $x_2 = x_1^2$ in $\mathbb{R}^2,$ that is, the space $EP = \{ (x_1, x_2) \in \mathbb{R}^2 : x_2 < x_1^2\},$ is given by 
\begin{equation}\label{h_extpara} h_{EP}(x) = \sqrt{2\Big(\sqrt{x_1^2 +(1/4 - x_2)^2}+1/4-x_2\Big)} -1 .\end{equation}
The profile for the exterior of any parabola can be found by making an appropriate change of variables in this formula. 

Using (\ref{h_extpara}) to compute quantities appearing in Theorem \ref{main_thm}, for any fixed point $o,$ there exist constants $0<c \leq C < +\infty$ such that for all $t>1,$
\[ c t^{-3/2}\leq p(t,o,o) \leq C t^{-3/2}.\]

Now fix $0<r_1 \leq r_2.$ Assume that $x,y$ lie in different copies of the exterior of the parabola and are both at distance approximately $\sqrt{t}$ from the collar, that is, $r_1 \sqrt{t} \leq |x|, |y| \leq r_2 \sqrt{t}.$ Since $V_{i,h}(r)$ satisfies (\ref{vol_cond}),
\[ H_h(x,t)\approx \frac{|x|^2}{V_{i_x, h}(|x|)} \approx \frac{t}{V_{i_x,h}(\sqrt{t})} \approx t^{-1/2}. \]
Likewise, $H_h(y, t) \approx t^{-1/2}.$ Thus there exists constants $0<c_1 \leq C_1 < +\infty$ such that, for $t$ sufficiently large and all such $x,y$,
\[ c_1 h(x)h(y) t^{-2} \leq p(t,x,y) \leq C_1 h(x)h(y) t^{-2}.\]
Depending on the location of $x,y$ relative to the parabola, $h(x), \ h(y)$ can range from zero to behaving like $t^{1/4}.$ See Figure \ref{para_circle}. Note if $h(x)\approx h(y) \approx t^{1/4},$ then $p(t,x,y) \approx t^{-3/2}.$

\begin{figure}[h]\begin{center}
\begin{tikzpicture}[domain=0:5, scale=.5] 

\draw[<->, blue, ultra thick]   plot[smooth,domain=-3:3] (\x, {\x*\x});
\draw[thick] (0,-1.5) circle(.5);

\draw (0, .25) circle(8);
\draw[green,ultra thick] (-8,.25) arc (180:360:8);

\draw[<->] (-9,0)--(9,0);
\draw[<->] (0,9)--(0,-9);

\node at (8.6,0) [anchor=north] {$x_1$};
\node at (0,8.6) [anchor=east] {$x_2$};
\node at (2.2,3.8) [anchor=west] {$x_2 = x_1^2$};

\draw[fill=black] (0,.25) circle (.05);
\node at (0,.9) [anchor=east] {\small{$\frac{1}{4}$}};
\draw[thick, ->] (0,.25)--(-8,.25);
\node at (-5,.25) [anchor=south] {$r\sqrt{t}$};

\draw[<-,thick] (.2,-1.96)--(.4,-2.5);
\node at (.3,-2.5) [anchor=west] {collar};

\draw[->,thick] (-4,8)--(-2.78,7.75);
\node at (-4,8) [anchor=east]{$h(x)=0$};
\draw[->,thick] (4,8)--(2.78,7.75);
\node at (4,8) [anchor=west]{$h(x)=0$};

\node at (-2.8,-8.2) {$h(x) \approx t^{1/4}$};

\end{tikzpicture}\end{center}\caption{If $|x| \approx \sqrt{t}$, then for $t$ sufficiently large, $x$ is also approximately at distance $\sqrt{t}$ from the focus of the parabola; denote this distance by $r\sqrt{t}$ so that $x$ lies on the circle depicted above. In the bottom half of this circle, colored green, $h(x) \approx t^{1/4}$ for large $t$. As $x$ travels along the circle toward the parabola, $h(x)$ decreases to zero.}\label{para_circle}\end{figure}
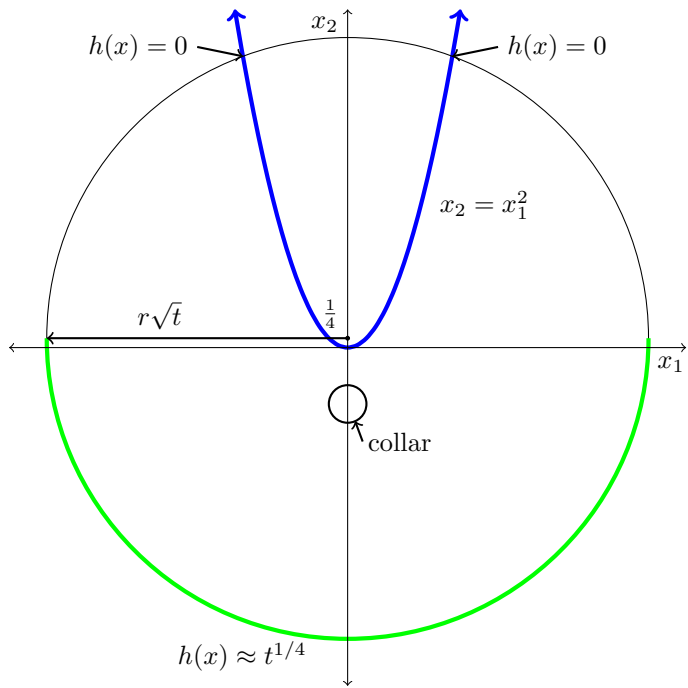

\end{example}

\begin{example}\label{parabola_plane}
Consider Example \ref{2parabolas}, except remove a parabola with Dirichlet condition from only \emph{one} plane. Then the profile $h(x)$ for this manifold behaves like $h_{EP}(x),$ which is given by (\ref{h_extpara}), in the end with the parabola removed, and, in the plane without the parabola removed, $h(x)$ behaves like $\log(|x|),$ as this is the harmonic profile for the plane minus a disk. Thus, for $o$ fixed, the presence of the plane without the parabola removed results in heat kernel decay of the form
\[ c(t \log^2 t)^{-1} \leq p(t,o,o) \leq C(t\log^2 t)^{-1} \]
for all $t>1.$

Again, fix $0<r_1 \leq r_2$ and take $x$ in the plane minus the parabola and $y$ in the plane, both so that their distance to the collar lies between $r_1\sqrt{t}$ and $r_2 \sqrt{t}.$ We still have $H(x,t) \approx t^{-1/2}$ as in the previous example, but $V_{i_y,h}$ does not satisfy (\ref{vol_cond}) for $y$ in the plane without the parabola removed. Working directly with the definition of $H_h(y,t)$ (see formula (\ref{H_h})), we find $H_h(y,t) \approx (\log^2 t)^{-1}$ for all such $y.$ Therefore, for $t>1,$ we obtain the estimate 
\[ c_1 h(x) (t^{3/2}\log t)^{-1} \leq p(t,x,y) \leq C_1 h(x) (t^{3/2}\log t)^{-1}.\]
Again, the behavior of $h(x)$ depends on where $x$ is relative to the parabola as in Figure \ref{para_circle}. If $h(x) \approx t^{1/4},$ then $p(t,x,y) \approx (t^{5/4} \log t)^{-1}.$

\end{example}

\begin{example}\label{paraboloid_ex}
Now consider an analog of Example \ref{2parabolas} or \ref{parabola_plane}, but in higher dimensions. For instance, remove a paraboloid of revolution from a copy of $\mathbb{R}^3$ and impose Dirichlet boundary condition on the resulting boundary. Take two copies of this space and glue them via a collar. Then, in theory, we may apply the technique of the previous two examples to estimate the heat kernel of this space. However, estimates for the profile of $\mathbb{R}^3$ minus a paraboloid are not known. Thus, in practice, we cannot compute explicit decay rates of this heat kernel. 
\end{example}

\begin{appendices}

\section{Generalities and notation}\label{notes}
Let $(\Omega, \delta \Omega)$ be a weighted Riemannian manifold with boundary.  If the associated metric space $(\Omega,d)$ is not complete, let $\widetilde{\Omega}$ be its completion and $\partial \Omega=\widetilde{\Omega}\setminus \Omega$.  Note that this setup is more general than the special case considered in the main part of the paper where condition (*) holds:
\begin{itemize}
\item[(*)] 
$\Omega$ is a submanifold of the weighted complete Riemannian manifold with boundary $(M, \delta M)$ with $M\setminus \Omega\subseteq \delta  M$, and $\partial \Omega$ has countably many connected components such that every point in $M$ has a neighborhood containing only finitely many of these connected components, and these components are themselves codimension $1$ submanifolds (with boundary) of $M$.
\end{itemize}
Indeed, in general, $\widetilde{\Omega}$ may not be a manifold with boundary. One more subtle difference of importance to us here is that the weight $\sigma$ on $\Omega$, which  is smooth and positive in $\Omega$, might have a variety of behaviors when one approaches the boundary $\partial \Omega$.  Under hypothesis (*), the weight $\sigma$ is smooth and positive up to the boundary $\delta M$.

Recall that $\Omega^\bullet =\Omega\setminus \delta\Omega, \ \Wloc(\Omega^\bullet)$ is the local Sobolev space on $\Omega^\bullet$, and $W^1_0(\Omega)$ is the closure of $\mathcal C^\infty_c(\Omega)$ under the norm $\left(\int_{\Omega}|f|^2d\mu+\int_\Omega |\nabla f|^2 d\mu\right)^{1/2}$ (Definition \ref{localSobolev}). For $U\subset \Omega,$ the space $W^1(U)$ is the set of functions $f \in \Wloc(U^\bullet)$ such that 
\[ \left(\int_U |f|^2d\mu+\int_U |\nabla f|^2 d\mu\right)^{1/2} < + \infty.\]
Then the Sobolev space $\Wloc(U)$ is defined as the set of functions where for any open relatively compact $V \subset U,$ there is a function $f^V \in W^1(U)$ such that $f = f^V$ almost everywhere on $V.$ We also let $\mbox{Lip}(V)$ be the space of bounded Lipschitz functions on $V$. 

 Recall the following definition used in \cite{GS5}:
\begin{defin}[Relatively connected annuli property]\label{RCA}\ A metric space $(M, d)$ satisfies the relatively connected annuli property ((RCA), for short) with respect to a point $o\in M$ if there exists a constant $C_A$ such that for any $r\ge C_A^2$  and all $x,y \in M$ such that $d(o,x) = d(o,y) = r$, there exists a continuous path $\gamma : [0,1]\to  M$  with $\gamma(0) = x, \gamma(1) = y$ whose image is contained in $B(o, C_Ar)\setminus B(o, C_A^{-1}r)$.\end{defin}

\section{Local and global harmonic functions}\label{harmonic}

Throughout, we choose to use appropriate weak definitions of solutions of the Laplace or heat equation even though,  in the special set-up of interest to us, because of  various simplifying hypotheses made in the main parts of the article, such solutions are, in fact, classical solutions, including with respect to the boundary conditions (see, for instance, \cite{Lieber}).

\begin{defin}[Harmonic function in an open set $U$ of $\Omega$] Let $U$ be an open subset of $\Omega$. A function $u$ defined in $U$ is  a (local)  harmonic function in $U$  if 
$u\in W_{\mbox{\tiny loc}}(U)$ and, for any $\phi\in \mathcal C^\infty_c(U)$, 
$$\int_Ug(\nabla u,\nabla \phi)d\mu=0.$$
In classical terms, $u \in \mathcal C^\infty _{\mbox{\tiny loc}}(U)$,  $\Delta u=0$ in $U\cap \Omega^\bullet=U\cap M^\bullet$, and  $u$ has vanishing normal derivative along
$U\cap \delta \Omega$ (and no condition along $\overline{U}\cap \partial \Omega$).\end{defin}

\begin{defin}[Harmonic function in an open set $U\subset \Omega$ vanishing along $\partial \Omega$] \label{def-vanish}
Let $U$ be an open subset of $\Omega$.
A function $u$ defined in $U$ is  a (local)  harmonic function in $U$ with Dirichlet boundary condition along $\partial \Omega$ (i.e., vanishing along $\partial \Omega$) if $u$ is locally harmonic in $U$ and, for any $\psi\in \mathcal C_c(U^\sharp)\cap \mbox{Lip}(U^{\sharp})$,
$u\psi\in W^1_0(U)$.  Here $U^\sharp$ is the largest open set in $\widetilde{\Omega} $ such that $U^\sharp\cap \Omega=U$. In classical terms, under condition (*), $u \in \mathcal C^\infty _{\mbox{\tiny loc}}(U)$,  $\Delta u=0$ in $U\cap \Omega^\bullet=U\cap M^\bullet$,  $u$ has vanishing normal derivative along
$U\cap \delta \Omega$, and $u$ can be extended continuously by setting $u(x)=0$ at any point $x\in \partial \Omega$ which is at positive distance from 
$\Omega \setminus U$. \end{defin}

\begin{defin}[Global harmonic function in $\Omega$]\label{def-globalharm} A global harmonic function in $\Omega$ is a function $u$ in $\Omega$ which is locally harmonic in $\Omega$ and vanishes along $\partial \Omega$. \end{defin}

\begin{rem} This last definition applies to the case $M=\Omega,$ providing the definition of global harmonic function in $M$. In that case, there is no Dirichlet boundary condition as $\partial M=\emptyset$.
\end{rem}

\begin{defin}[Elliptic Harnack inequality] \label{EHI}We say that:
\begin{itemize} 
\item The elliptic Harnack  inequality holds locally in a  subset $V$ of $\Omega$  if for any compact set $K\subset V$ there exist $H_K$ and $r_K>0$ such that,
for all $(x,r)\in K\times (0,r_K)$, and any positive  harmonic function $u$ in $B_\Omega(x,2r),$
$$\sup_{B_\Omega(x,r)}\{u\}\le H_K\inf_{B_\Omega(x,r)}\{u\}.$$
\item  The elliptic Harnack inequality holds up to scale $r_0$ over a  subset $K$ of $\Omega$  if there is a constant $H_{K,r_0}$ such that,
for all $ (x,r)\in K\times (0,r_0)$ and any positive harmonic function $u$ in $B_\Omega(x,2r),$
$$\sup_{B_\Omega(x,r)}\{u\}\le H_{K,r_0}\inf_{B_\Omega(x,r)}\{u\}.$$
\item The elliptic Harnack  inequality holds uniformly in an open subset $U$ of $\Omega$ if there is a constant $H_U$ such that
for all $ (x,r)\in U\times (0,+\infty)$  such that $B_\Omega(x,2r)\subset U$ and any positive harmonic function $u$ in $B_\Omega (x,2r)$,
$$\sup_{B_\Omega (x,r)}\{u\}\le H_U\inf_{B_\Omega (x,r)}\{u\}.$$
\end{itemize}
\end{defin}

\begin{rem}The elliptic Harnack inequality always holds locally on $\Omega$. It holds up to scale $r_0=d(K,\partial \Omega)>0$ on any compact subset $K$ of $\Omega$.  
  Under assumption (*),   the elliptic Harnack inequality always holds locally on $\Omega$ and on $M.$ (They do not mean the same thing.) It also holds up to scale $r_0$ for any fixed $r_0$ on any compact subset of $M$. \end{rem}
  
\begin{defin}[Boundary elliptic Harnack inequality] \label{EBHI}These definitions are only useful when $\partial\Omega\neq \emptyset$.

\begin{itemize} 
\item  The boundary elliptic Harnack  inequality holds locally on a subset $V$ of $\partial \Omega$  if for any compact set $K\subset V$ there exist $H_K$ and $r_K>0$ such that,
for all $(x,r)\in K\times (0,r_K)$, and any two positive harmonic functions $u,v$ in $B_\Omega(x,2r)=\Omega\cap B_{\widetilde{\Omega}}(x,2r)$ vanishing along $\partial \Omega $,
$$\sup_{B_\Omega(x,r)}\{u/v\}\le H_K\inf_{B_\Omega (x,r)}\{u/v\}.$$
\item  The boundary elliptic Harnack inequality holds up to scale $r_0$ over a  subset $K$ of $\partial \Omega $  if there is a constant $H_{K,r_0}$ such that,
for all $ (x,r)\in K\times (0,r_0)$ and any two positive harmonic functions $u,v$ in $B_\Omega(x,2r)=\Omega\cap B_{\widetilde{\Omega}}(x,2r)$ vanishing along $\partial  \Omega,$
$$\sup_{B(x,r)}\{u/v\}\le H_{K,r_0}\inf_{B(x,r)}\{u/v\}.$$
\item The boundary elliptic Harnack  inequality holds uniformly in an open subset $U$ of $\Omega$  if there is a constant $H_U$ such that
for all $ (x,r)\in \partial \Omega \times (0,\infty)$  such that $B(x,2r)\subset U$ and any  two positive harmonic functions $u,v$ in $B(x,2r)$ vanishing along $\partial \Omega,$
$$\sup_{B(x,r)}\{u/v\}\le H_U\inf_{B(x,r)}\{u/v\}.$$
\end{itemize}
\end{defin}

\begin{rem} Whether a boundary  elliptic Harnack inequality holds or not depends on the nature of the boundary $\partial \Omega$. Under assumption (*), the boundary elliptic Harnack inequality always holds locally on $\Omega$ and, for each  $r_0>0$, up to scale $r_0$ on any compact
subset $K$ of $\partial \Omega$.  \end{rem}

\section{Local and global solutions of the heat equation}

To save space, we refer the reader to \cite{ESC, GyS, St1,St2,St3} for the definition of local weak solutions of the heat equation in an open cylindrical domain  $(a,b)\times U \subset \mathbb R\times \Omega$, in the context of the
strictly local regular Dirichlet space $(W^1_0(\Omega), \int_\Omega g(\nabla f,\nabla f)d\mu)$. Such weak solutions are automatically smooth in time, so one can be a bit cavalier  with the details of such definitions. In fact, these local weak solutions are always smooth in 
$(a,b)\times U,$ including up to $\delta\Omega \cap U$ where they satisfy the Neumann boundary condition.

For the definition of weak solutions in an open set $U$ of $\Omega$ vanishing along $\partial \Omega$, we refer the reader to \cite{GyS}. Simply put, given that weak solutions are smooth in time and at any point in $U$, the condition
that the solution vanishes along the relevant part of $\partial \Omega$ can be captured as in Definition \ref{def-vanish} by requiring that, for any $t\in (a,b)$ and any $\psi\in \mathcal C_c(U^\sharp)\cap \mbox{Lip}(U^\sharp)$, $u\psi\in W^1_0(U)$. In fact, under condition (*), such a solution will vanish continuously along the relevant part of $\partial \Omega$ \cite{Lieber}.

\begin{defin}[Global solution of the heat equation in $(a,b)\times\Omega$] A global solution of the heat equation function in $(a,b)\times \Omega$ is a function $u$ in $(a,b)\times M$ which is smooth in $(a,b)\times \Omega$, satisfies $(\partial _t -\Delta)u=0$ in $(a,b)\times M^\bullet$, has vanishing normal derivative on $\delta \Omega$  and vanishes along $\partial \Omega$. \end{defin}

 Given a time-space cylinder $Q=(s,s+ 4r^2)\times B(x,2r)$, set $Q_-=(s+r^2,s+2r^2)\times B(x,r)$ and $Q_+=(s+3r^2,s+4r^2)\times B(x,r)$.

\begin{defin}[Parabolic Harnack inequality]\label{PHI} We say that:
\begin{itemize} 
\item  The parabolic Harnack  inequality holds locally in a subset $V$ of $\Omega$  if for any compact set $K\subset U$ there exist $H_K$ and $r_K>0$ such that,
for all $s\in \mathbb R$, $(x,r)\in K\times (0,r_K)$, and any local solution $u\ge 0$ of the heat equation in $Q=(s, s+ 4r^2)\times  B_\Omega(x,2r),$
$$\sup_{Q_-}\{u\}\le H_K\inf_{Q_+}\{u\}.$$
\item  The parabolic Harnack inequality holds up to scale $r_0$ over a  subset $K$ of $\Omega$  if there is a constant $H_{K,r_0}$ such that,
for all $s\in \mathbb R$, $ (x,r)\in K\times (0,r_0)$ and any local solution $u\ge 0$ of the heat equation in $Q=(s,s+4r^2)\times B_\Omega(x,2r)$,
$$\sup_{Q_-}\{u\}\le H_{K,r_0}\inf_{Q_+}\{u\}.$$
\item The parabolic Harnack  inequality holds uniformly in an open subset $U$ of $\Omega$ if there is a constant $H_U$ such that,
for all $s\in \mathbb R$ and $ (x,r)\in U\times (0,+\infty)$  such that $B(x,2r)\subset U$ and any local solution $u\geq0$ of the heat equation in $(s,s+4r^2)\times B_\Omega (x,2r)$,
$$\sup_{Q_-}\{u\}\le H_U\inf_{Q_+}\{u\}.$$
\end{itemize}
\end{defin}

\section{Doubling and Poincar\'e}

\begin{defin}[Doubling]\label{VD} Very generally, doubling refers to  the volume function property that  $V(x,2r)\le C V(x,r)$ where $(x,r)$ belong to some specific subset of $\Omega\times (0,+\infty)$.
\begin{itemize} 
\item A set $V$ is locally doubling if for any compact set $K\subset V$ there exists $r_0(K)>0$ such that
$K$ is doubling up to scale $r_0(K)$.
\item  An arbitrary set $K$ is  doubling up to scale $r_0$ if there is a constant $C_{K,r_0}$ such that
for all $ (x,r)\in K\times (0,r_0)$, $V(x,r)\le C_{K,r_0} V(x,2r)$.
\item An open subset $U$ of $\Omega$ or $\widetilde{\Omega}$ is uniformly doubling (or doubling for short) if there is a constant $C_U$ such that
for all $ (x,r)\in U\times (0,+\infty)$ such that $B(x,2r)\subset U$, $V(x,2r)\le C_U V(x,r)$.\end{itemize}
\end{defin}

\begin{rem} A manifold with boundary is always locally doubling. It may or not be doubling up to scale $r_0$ for some $r_0>0$. It may or not be uniformly doubling.   Euclidean space $\mathbb R^n$  is doubling, as is any complete Riemannian manifold without boundary with non-negative Ricci curvature. 
Convex domains in $\mathbb R^n$ are doubling.
Hyperbolic space is doubling up to any fixed scale $r_0,$ but it is not doubling. A complete Riemannian manifold without boundary with Ricci curvature bounded below is doubling up to any fixed scale $r_0$.   
\end{rem}

A Poincar\'e inequality  is an inequality of the form
$$\forall f\in \mathcal C^\infty (B(x,r)),\;\;\int_{B(x,r)} |f-f_B|^2d\mu \le P r^2 \int_{B(x,r)}|\nabla f|^2 d\mu,$$
where $f_B$ is the average value of $f$ over $B=B(x,r)$.

\begin{defin}[Poincar\'e inequality]\label{PI} Consider the following three versions:
\begin{itemize} 
\item  The Poincar\'e inequality holds locally in a subset $V$ of $\Omega$ or $\widetilde{\Omega}$  if for any compact set $K\subset V$ there exists $r_0(K)>0$  and a constant $P_K$ such that,
for all $(x,r)\in K\times (0,r_0(K))$,
$$\forall f\in \mathcal C^\infty (B(x,r)),\;\;\int_{B(x,r)} |f-f_B|^2d\mu \le P_K r^2 \int_{B(x,r)}|\nabla f|^2 d\mu.$$
\item  The Poincar\'e inequality holds up to scale $r_0$ over a  subset $K$ of $\Omega$ or $\widetilde{\Omega}$  if there is a constant $P_{K,r_0}$ such that,
for all $ (x,r)\in K\times (0,r_0)$,
$$\forall f\in \mathcal C^\infty (B(x,r)),\;\;\int_{B(x,r)} |f-f_B|^2d\mu \le P_{K,r_0} r^2 \int_{B(x,r)}|\nabla f|^2 d\mu.$$
\item The Poincar\'e inequality holds uniformly in an open subset $U$ of $\Omega$  or $\widetilde{\Omega}$  if there is a constant $P_U$ such that
for all $ (x,r)\in U\times (0,+\infty)$  such that $B(x,r)\subset U$,
$$\forall f\in \mathcal C^\infty (B(x,r)),\;\;\int_{B(x,r)} |f-f_B|^2d\mu \le P_U r^2 \int_{B(x,r)}|\nabla f|^2 d\mu.$$\end{itemize}
\end{defin}

\begin{rem}  A Poincar\'e inequality always holds locally on any manifold with boundary.  A Poincar\'e inequality up to scale $r_0$ for some $r_0>0$ may hold or not on a manifold with boundary. A Poincar\'e inequality may hold uniformly or not on a manifold with boundary.  A Poincar\'e inequality holds uniformly on Euclidean space $\mathbb R^n,$ and it also holds uniformly on any complete Riemannian manifold without boundary with non-negative Ricci curvature. 
A Poincar\'e inequality up to scale $r_0$ for any  fixed $r_0>0$ holds on hyperbolic space, but it does not hold uniformly at all scales. A Poincar\'e inequality up to scale $r_0$ for any fixed $r_0>0$ holds on any complete Riemannian manifold without boundary with bounded Ricci curvature.
\end{rem} 

\section{Harnack weighted manifolds}\label{Harnack}  

As in Appendix \ref{notes}, let $(\Omega,\delta \Omega)$ be a Riemannian manifold. We do not assume it is complete. Let $\widetilde{\Omega}$ be its metric completion and $\partial \Omega =\widetilde{\Omega}\setminus \Omega$.  Let $\sigma$ be a smooth positive weight on $\Omega$.   
We consider the (local regular) Dirichlet space $(W^1_0(\Omega),\int_\Omega |\nabla f|^2 d\mu)$ and the associated heat equation (see, e.g.,  \cite{GyS, St2,St3} for details). 

\begin{defin}[Harnack manifold] \label{HarnackManifold}We say that a weighted Riemannian manifold  $\Omega$ is a Harnack manifold if the parabolic Harnack inequality holds uniformly in $\Omega$.
\end{defin}  
Under relatively mild conditions on $\Omega,\ \widetilde{\Omega},$ and the weight $\sigma$, this condition is known to be equivalent (\cite{Gri,PSHDuke,St3,GyS}) to the validity of the volume doubling condition and Poincar\'e inequality, uniformly in $\widetilde{\Omega}$. It is also equivalent to the validity of the two-sided (Gaussian) heat kernel estimate
\begin{equation}\label{TSG}
\frac{c_1e^{-C_2 \frac{d^2}{t}}}{V(x,\sqrt{t})}\le p_\Omega(t,x,y) \le \frac{C_1 e^{-c_2 \frac{d^2}{t}}}{V(x,\sqrt{t})},\;\;d=d(x,y).
\end{equation}

\begin{rem} The best known large class of Harnack manifolds is the class of complete Riemannian manifolds with non-negative Ricci curvature  (see \cite{Asp} and the references therein).  In this case the weight is the constant weight $1$. Reference \cite{GS1} discusses how to obtain examples with non-trivial weights. 
We are interested in the case when $\widetilde{\Omega}$ is a (smooth) manifold satisfying condition (*). In this case, assuming that $\sigma$ has a continuous extension to $\partial \Omega$,
it is necessary for the weight $\sigma$ to vanish at the boundary in order for the weighted manifold $\Omega$ to have a chance to be a Harnack manifold. 
One of the simplest examples of Harnack manifold of this type is the upper-half Euclidean space
$\mathbb R^n_+=\{x=(x_1,\dots,x_n)\in \mathbb R^n: x_n>0\}$ equipped with the weight $\sigma(x)= x_n^2$. See \cite{GyS} for many more examples. 
\end{rem}
 
We will make use of the following key theorems. See \cite{GyS} for a discussion of more general versions of these theorems.

\begin{thm}\label{HarnackNotComplete} Let $(\Omega,\sigma)$ be a weighted Riemannian manifold with boundary. Assume that $\widetilde{\Omega}=M$ is a manifold with boundary and that $\partial \Omega=M\setminus \Omega$ satisfies condition (*). Assume that the weight $\sigma$ has a continuous extension to $M$, vanishing on $\partial \Omega$ and such that the restriction to $\Omega$ of any Lipschitz function  compactly supported in $M$ is in  $W^1_0(\Omega)$.  Then the weighted manifold $(\Omega,\sigma)$ is Harnack if and only if  $(\Omega, \sigma)$ is uniformly doubling and the Poincar\'e inequality holds uniformly.
\end{thm}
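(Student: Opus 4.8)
The plan is to reduce the statement to the classical characterization of the parabolic Harnack inequality on a complete metric measure Dirichlet space, by transferring everything from $\Omega$ to its completion $M=\widetilde\Omega$, on which the weighted measure $\mu=\sigma\,dx$ becomes a (degenerate) Radon measure. First I would set the stage: since $M$ is a manifold with boundary it is a complete, locally compact, separable, geodesic metric space, and since $\sigma$ extends continuously to $M$ with $\sigma>0$ on the dense subset $\Omega$ and $\sigma=0$ only on the $dx$-null set $\partial\Omega$, the formula $\mu(A)=\int_A\sigma\,dx$ defines a Radon measure on $M$ of full support. The bilinear form $\mathcal E(f,g)=\int_\Omega g(\nabla f,\nabla g)\,d\mu$ with domain $\mathcal F=W^1_0(\Omega)$ is then a strongly local Dirichlet form on $L^2(M,\mu)=L^2(\Omega,\mu)$, with energy measure $d\Gamma(f,f)=|\nabla f|^2\,d\mu$.

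Second, I would verify that $(\mathcal E,\mathcal F)$ is a \emph{regular} strongly local Dirichlet form on $L^2(M,\mu)$ whose intrinsic distance is $d$. Regularity amounts to producing enough functions in $\mathcal C_c(M)\cap\mathcal F$, and this is precisely where the hypothesis that the restriction to $\Omega$ of any Lipschitz function compactly supported in $M$ lies in $W^1_0(\Omega)$ is used: combined with partitions of unity and cutoffs subordinate to coordinate charts of the manifold with boundary $M$ (legitimate under condition (*)), it yields a core that is dense both in $\mathcal C_c(M)$ for the uniform norm and in $\mathcal F$ for the form norm. For the intrinsic distance, the identity $d\Gamma(f,f)=|\nabla f|^2\,d\mu$ together with $\sigma>0$ on $\Omega$ forces the set $\{f:\ d\Gamma(f,f)\le d\mu\}$ to coincide with $\{f:\ |\nabla f|\le 1\text{ a.e. on }\Omega\}$, so the associated intrinsic metric is the completed geodesic distance of $\Omega$, namely $d$ on $M$; condition (*) makes this identification clean up to $\partial\Omega$. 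The structural point underlying all of this is that, because $\sigma$ vanishes on $\partial\Omega$, the set $\partial\Omega$ carries zero $\mathcal E$-capacity inside $M$ and is thus ``invisible'' to the Dirichlet form: local weak solutions of the heat equation on cylinders $(a,b)\times B_\Omega(x,2r)$ (Neumann along $\delta\Omega$, Dirichlet along $\partial\Omega$) are exactly the local weak solutions of $\mathcal E$ on the corresponding cylinders in $M$, and $\mu(B_M(x,r))=\mu(B_\Omega(x,r))$.

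Third, with $(M,d,\mu,\mathcal E)$ now a complete, geodesic, regular strongly local Dirichlet space whose intrinsic metric is $d$, I would invoke the Grigor'yan--Saloff-Coste--Sturm equivalence (\cite{Gri,PSHDuke,St3}; see also \cite{GyS}): the parabolic Harnack inequality for local weak solutions of $\mathcal E$, at all scales, holds if and only if $(M,d,\mu)$ is volume doubling and satisfies the $L^2$ Poincar\'e inequality, at all scales. Finally I would translate both sides back into the language of the theorem. By Definitions \ref{HarnackManifold} and \ref{PHI} together with the boundary-invisibility just discussed, the left-hand side is exactly the assertion that $(\Omega,\sigma)$ is a Harnack manifold; and since balls and volumes in $\Omega$ and in $M$ agree (using the convention $B_\Omega(x,r)=B_M(x,r)\cap\Omega$ for $x\in\partial\Omega$), the right-hand side is exactly the assertion that $(\Omega,\sigma)$ is uniformly doubling and satisfies the Poincar\'e inequality uniformly in the sense of Definitions \ref{VD} and \ref{PI}. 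This yields the stated equivalence.

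I expect the genuine obstacle to lie in the second step: rigorously establishing that $\partial\Omega$ is negligible for the Dirichlet form on $L^2(M,\mu)$ — that is, that $(\mathcal E,W^1_0(\Omega))$ really is regular on $L^2(M,\mu)$ with intrinsic metric $d$, and that the notions of ``local solution'', ``geodesic ball'', and ``volume'' coincide on $\Omega$ and on $M$ despite $\partial\Omega$ having positive codimension-one Hausdorff measure. The hypothesis that Lipschitz compactly supported functions on $M$ restrict into $W^1_0(\Omega)$ is exactly the device that makes this work, and most of the care in the argument goes into exploiting it; everything after that is an application of known metric-measure-space results.
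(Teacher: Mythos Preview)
Your proposal is correct and matches the paper's approach exactly: the paper does not give a self-contained proof but simply remarks that the result ``is a slight extension of the results in \cite{Gri,PSHDuke}, which essentially cover the case $\Omega=M$,'' and that ``this extension is contained in the more general Dirichlet space version given in \cite{St3}.'' Your write-up spells out precisely how that reduction to Sturm's framework goes---transporting the Dirichlet form to the complete space $M=\widetilde\Omega$, using the Lipschitz-restriction hypothesis to obtain regularity, identifying the intrinsic distance with $d$, and then invoking the Grigor'yan--Saloff-Coste--Sturm equivalence---so it is a faithful elaboration of what the paper leaves implicit.
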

This is a slight extension of the results in \cite{Gri,PSHDuke}, which essentially cover the case $\Omega=M$. This extension is contained in the more general Dirichlet space version given in \cite{St3}. 

The following important theorem follows from Section 5 of \cite{GyS}.

\begin{thm}\label{HarnackEnds} Assume that $(M,\sigma)$ is a weighted complete Riemannian manifold which is uniformly Harnack.  Let $\Omega$ be an open subset of $M$ such that $\partial \Omega=M\setminus \Omega$ is a subset of the boundary $\delta M$ and satisfies (*). Assume that $\Omega$ is a uniform subset of $M$ (Definition \ref{uniform}) and let $h$ be a positive harmonic function vanishing along $\partial \Omega$ (a harmonic profile for $\Omega$). 

Then there are constants $0<c\le C<+\infty$ such that, for any $x\in M, r>0$,  and any 
$x_r$ such that $x_r\in B(x,Ar)$ and $d(x_r,\partial \Omega)\ge a r$, we have 
\[ ch(x_r)^2 V(x,r)\leq V_{h}(x,r):=\int_{B(x,r)}  h^2  d\mu\le  Ch(x_r)^2 V(x,r) \]
where, as usual, $V(x,r)=\mu(B(x,r))$.

Moreover, the Riemannian manifold $\Omega$ weighted by $\sigma_h=\sigma h^2$ is a Harnack manifold. In particular, if $p_{\Omega, h^2}$ indicates the heat kernel for $(\Omega, \sigma_h),$ there exist constants $c_1, c_2, c_3, c_4 >0$ such that $\forall t>0, \ x,y \in \Omega$ 
\[ \frac{c_1}{h(x_{\sqrt{t}})^2 V(x, \sqrt{t})} \exp\Big(-\frac{d(x,y)}{c_2t}\Big) \leq p_{\Omega, h^2}(t,x,y)\]
and
\[p_{\Omega, h^2}(t,x,y) \leq \frac{c_3}{h(x_{\sqrt{t}})^2 V(x, \sqrt{t})} \exp\Big(-\frac{d(x,y)}{c_4 t}\Big) .\]
\end{thm}

We also a need an extension of a particular case of the main result of \cite{GS5} that holds on a certain class of manifolds, some of which may be incomplete. 

\begin{thm}\label{HK-ends-estimate}
Let $(\Omega, \sigma)$ be a weighted Riemannian manifold with boundary such that $\widetilde{\Omega} = M$ is a manifold with boundary and $(\Omega, \sigma)$ satisfies (*). Assume that the weight $\sigma$ has a continuous extension to $M,$ vanishing on $\partial \Omega$ and such that the restriction to $\Omega$ of any Lipschitz function with compact support in $M$ belongs to $W_0^1(\Omega)$. If $\Omega$ has ends $U_1, \dots, U_k,$ further assume that each $U_i \cup \inner U_i, \ 1 \leq i \leq k,$ is Harnack in the sense of Theorem \ref{HarnackNotComplete} and non-parabolic (see Appendix \ref{parabolic}). Then for all $x,y \in \Omega$ and $t> 1,$
\begin{align*}
p(t,x,y) &\approx \ C \Bigg[\frac{1}{\sqrt{V_{i_x}(x,\sqrt{t})V_{i_y}(y, \sqrt{t})}} \exp \Big(-c\frac{d^2_\emptyset(x,y)}{t}\Big) \\
&+\Bigg(\frac{H(x,t) H(y,t)}{V_{0}(\sqrt{t})} + \frac{H(y,t)}{V_{i_x}(\sqrt{t})} + \frac{H(x,t)}{V_{i_y}(\sqrt{t})} \Bigg) \, \exp \Big( -c \frac{d_+^2(x,y)}{t}\Big)\Bigg],
\end{align*}
where the constants $C,c$ take different values in the upper and lower bounds.\end{thm}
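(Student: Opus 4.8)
The plan is to deduce the estimate from the main theorem of \cite{GS5} on heat kernels of connected sums, after checking that the proof there uses metric completeness of the ambient space only through properties of the underlying Dirichlet form and of the individual ends that remain available in the present, possibly incomplete, situation. Concretely, I would first record the relevant connected-sum structure: fix a precompact open set $K^*$ with smooth boundary containing the central part of $\Omega$, so that $\Omega = K^* \sqcup U_1^* \sqcup \cdots \sqcup U_k^*$ with $U_i^* = U_i \cup \inner U_i$ carrying the Neumann condition on $\inner U_i$, and note that by hypothesis each $U_i^*$ is Harnack in the sense of Theorem \ref{HarnackNotComplete} and non-parabolic. By Theorem \ref{HarnackNotComplete}, the Harnack property of $U_i^*$ is equivalent to uniform volume doubling together with a uniform Poincar\'e inequality, hence to genuine two-sided Gaussian bounds for the heat kernel of $U_i^*$ itself (with Neumann condition on all of $\delta U_i^*$; the Dirichlet condition carried by $\partial \Omega \cap \overline{U_i^*}$ is made invisible by the vanishing of the weight there). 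This supplies exactly the data that \cite{GS5} requires of each model end, the only novel feature being that $U_i^*$ need not be complete.

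Next I would verify that the global Dirichlet space $(W^1_0(\Omega), \int_\Omega g(\nabla f, \nabla f)\, d\mu)$ is regular and strongly local, with intrinsic distance locally quasi-isometric to $d$: this follows from the standing assumptions that $\sigma$ extends continuously to $M$ and vanishes on $\partial\Omega$ and that the restriction to $\Omega$ of any compactly supported Lipschitz function on $M$ lies in $W_0^1(\Omega)$, arguing as in \cite{GyS, St2, St3}. The point is twofold: the associated diffusion is then a strong Markov process on $\Omega$ that almost surely does not reach $\partial\Omega$, so its transition density can be analyzed by decomposing paths at the first hitting time of $\partial K^*$ exactly as in \cite{GS5}; and the Davies--Gaffney (finite propagation speed) estimate, used repeatedly in \cite{GS5}, is automatic for a strongly local form, while the on-diagonal upper bounds needed inside each end come from doubling plus Poincar\'e (Faber--Krahn / Nash) applied in $U_i^*$.

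I would then re-run the argument of \cite{GS5}. For the upper bound, split a path from $x$ to $y$ according to whether it avoids $K^*$---it then lives in a single end $U^*_{i_x} = U^*_{i_y}$ and contributes $(V_{i_x}(x,\sqrt t) V_{i_y}(y,\sqrt t))^{-1/2} \exp(-c\, d_\emptyset^2(x,y)/t)$ via the end-wise Gaussian bound---or crosses $K^*$---it then pays the spatial cost $\exp(-c\, d_+^2(x,y)/t)$ and a time cost governed by $H(x,t)$, resp.\ $H(y,t)$, whose two summands $|x|^2/V_{i_x}(|x|)$ and $(\int_{|x|^2}^t ds/V_{i_x}(\sqrt s))_+$ are, respectively, the cost of reaching $K^*$ and the cost of the time remaining after reaching it for a non-parabolic end; the three glued terms $H(x,t)H(y,t)/V_0(\sqrt t)$, $H(y,t)/V_{i_x}(\sqrt t)$ and $H(x,t)/V_{i_y}(\sqrt t)$ record where the heat has settled by time $t$. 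The matching lower bound is the standard ``go to the middle and come back'' chaining, using the uniform parabolic Harnack inequality in each end and the local parabolic Harnack inequality near $K^*$. Each lemma of \cite{GS5} feeding this scheme---the path-decomposition at $\partial K^*$, the reduction to Dirichlet heat kernels on the ends and on a neighborhood of $K^*$, and the hitting-probability and capacity estimates---carries over once the phrase ``$M_i$ complete Harnack'' is replaced by ``$U_i^*$ Harnack in the sense of Theorem \ref{HarnackNotComplete} and non-parabolic''.

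The hard part, and essentially the only place where something genuinely needs to be checked, is this last replacement: one must confirm that every invocation of completeness in \cite{GS5} (for single-end Gaussian bounds, hitting-time and capacity estimates, and the Harnack $\Leftrightarrow$ doubling $+$ Poincar\'e equivalence) admits a counterpart in the degenerate-weight, incomplete setting. For the Harnack equivalences this is exactly Theorem \ref{HarnackNotComplete} (from \cite{Gri, PSHDuke, St3}); for the capacity and hitting-probability estimates one invokes the corresponding Dirichlet-space generalizations in \cite{St1, St2, St3, GyS}, which apply here precisely because the vanishing of $\sigma$ on $\partial\Omega$ makes $\Omega$ behave, for the purposes of the Dirichlet form, like a complete space. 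A secondary, purely bookkeeping, difficulty is to reconcile the notation used here ($|x| = \sup_{z \in K} d(x,z)$, the distances $d_+$ and $d_\emptyset$, the reference points $o_i \in \inner U_i$, the choice of $K^*$ versus $K$) with that of \cite{GS5}.
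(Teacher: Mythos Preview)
Your proposal is correct and follows the same overall route as the paper: reduce to the connected-sum heat-kernel machinery of \cite{GS5} by arguing that the vanishing weight on $\partial\Omega$ renders the Dirichlet boundary invisible, so that each end behaves like a Harnack, non-parabolic piece and the proofs in \cite{GS5} can be repeated.

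The one place where the paper is sharper is in how it packages the ``invisibility'' of $\partial\Omega$. You allude to this (``the Dirichlet condition \ldots\ is made invisible by the vanishing of the weight there''; ``$\Omega$ behave[s], for the purposes of the Dirichlet form, like a complete space''), but the paper makes it a single clean statement: the hypothesis that every compactly supported Lipschitz function on $M$ restricts into $W_0^1(\Omega,\mu)$ forces $W_0^1(\Omega,\mu)=W^1(\Omega,\mu)$. Once the minimal and maximal Dirichlet forms coincide, there is literally no boundary condition on $\partial\Omega$, and the heat kernel in question is the heat kernel of the \emph{complete} manifold $M=\widetilde{\Omega}$ with degenerate weight. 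This collapses your programme of ``confirm that every invocation of completeness in \cite{GS5} admits a counterpart in the degenerate-weight, incomplete setting'' to a one-line reduction, after which one simply repeats the proofs of Theorems~4.9 and~5.10 of \cite{GS5} on $M$. Your more hands-on verification would of course also work, but the $W_0^1=W^1$ observation is the efficient way to say it.
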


Here
\[ i_x = \begin{cases} i, & \text{ if } x \in U_i \\ 0, & \text{ if } x \in K,\end{cases}\]
and, so that $|x|$ is bounded below away from zero, we set 
\[ |x| := \sup_{y \in K} d(x,y), \; x \in M.\]
Then if $B_i(x,r)$ denotes a ball in $U_i$ centered at $x$ with radius $r$ and $o_i$ is a fixed reference point on $\inner U_i, \ 1 \leq i \leq k,$
\[V_{i}(r) := V_{i}(o_i,r) = \int_{B_i(o_i,r)} \sigma_i(x) \, dx.\]
We further set $V_{0}(r) = \min_{1 \leq i \leq k} V_{i}(r).$ The notation $d_+(x,y)$ refers to the distance between $x$ and $y$ when passing through the compact middle $K,$ whereas $d_\emptyset(x,y)$ refers to the distance between $x$ and $y$ if we avoid $K.$ Finally, we define
\begin{equation*}
 H(x,t) = \min \Bigg\{ 1, \frac{|x|^2}{V_{i_x}(|x|)} + \Big(\int_{|x|^2}^t \frac{ds}{V_{i_x}(\sqrt{s}\,)}\Big)_+ \Bigg\}.
 \end{equation*}
 
\begin{rem}\label{simplify_H}
If $V_{i_x}(r)$ satisfies the condition that for some $c, \ \varepsilon >0,$
\begin{equation}\label{vol_cond} \frac{V_{i_x}(R)}{V_{i_x}(r)} \geq c \Big(\frac{R}{r}\Big)^{2+ \varepsilon} \quad \text{for all } R > r \geq 1, \end{equation}
then, as in Section 4.4 of \cite{GS5}, we have the estimate
\[ H(x,t) \approx \frac{|x|^2}{V_{i_x}(|x|)}.\]
\end{rem}

\begin{proof}[Proof of Theorem \ref{HK-ends-estimate}:]
Recall $d\mu = \sigma dx.$ Since the restriction to $\Omega$ of any Lipschitz function with compact support in $M$ belongs to $W_0^1(\Omega, \mu)$, in fact $W_0^1(\Omega, \mu) = W^1(\Omega, \mu).$ Hence the Dirichlet forms given by 
\[ \Big(W_0^1(\Omega, \mu), \int_\Omega g(\nabla f, \nabla f) d\mu\Big) \quad \text{and} \quad \Big(W^1(\Omega, \mu), \int_\Omega g(\nabla f, \nabla f) d\mu\Big)\]
coincide. Therefore we can think of $\partial \Omega$ as having no boundary condition, which amounts to considering the heat kernel on the complete manifold (with boundary) $M=\widetilde{\Omega}$, which has Harnack, non-parabolic ends. Hence the result follows from repeating the proofs of Theorems 4.9 and 5.10 in \cite{GS5}. 
\end{proof}

\section{Uniform Domains}
There are several definitions of uniform domains which are equivalent under certain circumstances (see \cite{GyS}, \cite{JLLSC2}, and the references therein). In this section we need only assume we have a length metric space $(M,d),$ that is, a metric space such that $d(x,y)$ is equal to the infimum of the lengths of all continuous curves joining $x$ to $y$ in $M.$ We recall a few definitions as in \cite{GyS}. 

\begin{defin}[Length of a Curve] Let $\gamma : I = [a,b] \mapsto M$ be a continuous curve. Then the length of $\gamma$ is given by 
\[ L(\gamma) = \sup \Big\{ \sum_{i=1}^n d(\gamma(t_{i-1}),\gamma(t_{i})) : n \in \mathbb{N}, a \leq t_0 < \cdots < t_n \leq b\Big\} .\]
\end{defin}

\begin{defin}[Uniform domain]\label{uniform}
Let $U \subset M$ be open and connected. We say $U$ is \emph{uniform} in $M$ if there exist positive, finite constants $c_u, C_U$ such that for any $x,y \in U$ there exists a continuous curve $\gamma_{x,y} : [0,1] \to U$ with $\gamma(0) = x, \gamma(1) = y$ that satisfies 
\begin{enumerate}[(a)]
\item $L(\gamma_{x,y}) \leq C_u d(x,y)$
\item For any $x \in \gamma_{x,y}([0,1]),$
\begin{equation}\label{unifeqn} d(x, \partial U) \geq c_u \frac{L(\gamma_{[x,z]})L(\gamma_{[z,y]})}{L(\gamma_{x,y})},\end{equation}
where for any $z = \gamma_{x,y}(s), \ z'= \gamma_{x,y}(s'), \ 0\leq s \leq s' \leq 1,$ the notation $L(\gamma_ {[z,z']})$ means $L(\gamma|_{[s,s']}).$
\end{enumerate}
\end{defin}

\begin{rem}
A set $U$ satisfying Definition \ref{uniform} is sometimes instead referred to as a \emph{length uniform domain.} In this context, a domain may be called uniform if the length $L$ of curves is replaced by the distance $d$ in $M$ everywhere in (\ref{unifeqn}). However, under a relatively mild condition on balls, these notions are equivalent. (See Theorem 2.7 of \cite{MSar} and Proposition 3.3 of \cite{GyS}, noting that the proof of Proposition 3.3 contains some errors.) In our case of interest, this condition follows from the doubling assumption.
\end{rem}

\begin{rem}
If we replace both the distance $d$ in $M$ in (a) and the lengths of curves $L$ in (\ref{unifeqn}) with $d_U,$ the distance in $U,$ we obtain the definition of an \emph{inner uniform} domain. In the situations considered in the main part of the paper, one can easily check a uniform domain is also inner uniform and all relevant results apply. In fact, in the case where the closure of a set $U$ is nice, $U$ being uniform in its closure is equivalent to $U$ being inner uniform. 
\end{rem}

\section{Green function, parabolic versus non-parabolic}\label{parabolic}
For any weighted Riemannian manifold $\Omega$ with minimal heat kernel $p(t,x,y)$ associated with the Dirichlet form $(W^1_0(\Omega),\int_\Omega g(\nabla f,\nabla f)d\mu)$, we consider  $G(x,y)=\int_0^\infty p(t,x,y)dt$, $x\neq y\in \Omega$. This (extended) function of $x\neq y$ can be identically $+\infty$, in which case we say $\Omega$ is parabolic. If it is finite at some pair $x\neq y,$ then it is finite for all $x\neq y$, and we say that $\Omega$ is 
non-parabolic. In the second case we call $G$ the Green function on $\Omega$. It is a global harmonic function on $\Omega$. There are many characterizations of parabolicity. One of them is that the constant function $\mathbf 1: \Omega\to (0,+\infty)$ is the limit  of a sequence of smooth functions $\phi_n$  with compact support for the norm $(\int_V |f|^2 \, d\mu+\int_V |\nabla f|^2 \, d\mu)^{1/2}$ where $V$ is one (any) fixed non-empty relatively compact open set in $\Omega$.   

Let $(M, \delta M)$ be a complete weighted Riemannian manifold with boundary and assume $M$ has a strictly positive weight $\sigma.$ Then using the above characterization of parabolicity, if $\Omega$ is a submanifold of $M$ such that $M\setminus \Omega$ contains a non-empty hypersurface of codimension $1$,  then  it easily follows that the weighted manifold $\Omega$ is non-parabolic. See \cite{GSurv} for an extensive discussion and references. 

When the weighted Riemannian manifold $\Omega$ is a Harnack weighted manifold,  parabolicity boils down to the volume integral condition
\begin{equation}\label{H-par} \int_1^\infty \frac{ds}{V(x,\sqrt{s})}=+\infty.
\end{equation}
This should be satisfied for one  (equivalently, all) $ x\in \Omega$.  Moreover, when $\Omega$ is  a Harnack weighted manifold that is non-parabolic, its Green function $G$ satisfies
\begin{equation}\label{TSGreen}c_\Omega \int_{d(x,y)^2}^{+\infty} \frac{ds}{V(x,\sqrt{s})}\le G(x,y) \le C_\Omega \int_{d(x,y)^2}^{+\infty} \frac{ds}{V(x,\sqrt{s})}.\end{equation}

\section{Allowing for corners}\label{corners} 

We chose to write our main results in the category of Riemannian manifolds with boundary, but there are no serious difficulties other than notational and expository to apply the same method under various levels of generalization. Because allowing some corners is  very natural in the context of connected sums, we feel compelled to describe briefly a  restrictive but simple set of hypotheses that can  replace the basic assumption that all our manifolds are smooth manifolds with boundary whose metric closures are also smooth manifolds with boundary satisfying $(*)$. 

Let us start with $M^\bullet$, a smooth Riemannian $n$-manifold without boundary and its metric closure $M$.  Let $\Omega$ be an open subset of
$M$ with $M$-topological boundary $\partial \Omega$ contained in $M\setminus M^\bullet$.  In our results up to this point, we were assuming that $M$ was a smooth manifold with boundary, and that $\Omega$ was a manifold with boundary satisfying the extra condition (*). 

Let consider instead the assumption that, for any point $x$ of $M\setminus M^\bullet$, there is a neighborhood $N_x$ of $x$ in $M$,
a Lipschitz map $\Phi_x: \mathbb R^{n-1}\to \mathbb R$ defining 
$R_x=\{(x_1,\dots,x_n): x_n\ge \Phi_x(x_1,\dots,x_{n-1})\}$
and a one-to-one Lipschitz map $\phi_x: N_x\to R_x$  which is bi-Lipschitz on its image $V_x$. The Lipschitz constants associated to $\Phi_x$ and $\phi_x, \ \phi^{-1}_x$ may depend on $x$ but are locally bounded on $M\setminus M^\bullet$. The (minimal) heat kernel on the (weighted) smooth manifold $(M^\bullet,\mu)$ is well defined as usual.
The (``Neumann type") heat kernel on $(M,\mu)$ is also easily defined, being associated with the  regular strictly local Dirichlet form $\int_M|\nabla f|^2d\mu$ with domain the set of all functions  $f$ in $W_{\mbox{\tiny loc}}(M^\bullet)$ such that $\int_M (|f|^2+|\nabla f|^2) d\mu<+\infty.$ In this Dirichlet space whose underlying space is $M$, solutions of the heat equation satisfy the local parabolic Harnack inequality. Any open set $\Omega$
with $\partial \Omega\subseteq M\setminus M^\bullet$ is locally inner-uniform in $M$ (see \cite[Section 3.2]{JLLSC2} for details on local inner-uniformity). By \cite{JLLSC2,JLLSC1}, it follows that  harmonic functions in $\Omega$ which vanish on $\partial \Omega$ satisfy the local version of the boundary elliptic Harnack inequality. Together, these facts allow for the generalization of the results of this paper in this context. The key difference lies in the way in which positive harmonic functions vanish at the boundary. On smooth manifolds with boundary, positive harmonic functions vanishing at the boundary vanish linearly. In the more general context described above, the best one can say is already contained in the boundary Harnack inequality, and vanishing of the type $d(x,\partial \Omega)^{\eta_{x_0}}$ when $x$ tends to $x_0\in \partial \Omega,$ with $\eta_{x_0}\in [0,1]$, is typical. Without entering into all the details necessary to make the above line of reasoning precise, it can easily be implemented to cover the very basic examples with corners shown in Figures
\ref{fig:cones1}--\ref{fig:cones3} of the introduction.  

 \section{Connection with earlier results}\label{earlier_results} To help the reader understand the techniques and estimates discussed above and relate them to the existing literature, we illustrate how they include some known results. Even though our focus is on manifolds with finitely many nice ends, this section discusses the simpler case when there is only one end. 
  
Consider a complete Harnack Riemannian manifold $M$ (e.g., a complete manifold with non-negative Ricci curvature) and a domain $\Omega=M\setminus K$. When $K$ is a bounded $\mathcal C^{1,1}$ domain and Dirichlet condition is assumed on the boundary of $\Omega$ (assume for simplicity that $\Omega$ is a domain, hence connected), \cite{Zglob} gives  global two-sided heat kernel estimates for $p_\Omega(t,x,y)$ at all times and locations. 
In the case where $M$ is non-parabolic, the estimates of \cite[Theorem 1.1(a)]{Zglob} compare 
$  p_\Omega(t,x,y)$ (at all times $t$ and locations $x,y$) to expressions of the form 
$$C\left(\frac{d(x, K)}{\sqrt{t}\wedge1}\wedge 1\right)\left(\frac{d(y,K)}{\sqrt{t}\wedge 1}\wedge 1\right)\frac{\exp\left(-c
\frac{d(x,y)^2}{t}\right)}{V(x,\sqrt{t})}.$$
The key ingredients in \cite{Zglob} are
(a) near boundary estimates based on  \cite{FGS} and the $\mathcal C^{1,1}$ nature of the boundary (see also \cite{Zloc}) and (b) global estimates away from the boundary from \cite{GS3} treating the case when $d(x,K)$ and $d(y,K)$ are greater than $1$.  

The validity of such two sided global heat kernel estimates are extended in several different directions in \cite[Theorem 5.11]{GyS}.  Theorem 5.11 of \cite{GyS} applies 
to a domain $\Omega=M\setminus K$ in a complete Harnack manifold whenever $\Omega$ is uniform (in fact, inner-uniform suffices--see, e.g., \cite{GyS,JLLSC2}).  In such cases (and without the hypothesis of non-parabolicity), there exists
a positive harmonic function $h$ on $\Omega$, vanishing appropriately when reaching $K$, such that the heat kernel
$p_\Omega(t,x,y)$ compares (at all times and locations $x,y$) to expressions of the form
$$C\frac{h(x)h(y)}{V_{h}(x,\sqrt{t})}\exp\left(-c
\frac{d(x,y)^2}{t}\right).$$
Here $V_{h}(x,r)=\int_{B(x,r)}h^2(z)dz\approx h(x_r)^2V(x,r)$ where $x_r$ is a point at distance at most $Ar$ from $x$ and at least $ar$ from $K$ for some appropriate fixed constants $a,A$ (see Theorem \ref{HarnackEnds}). 
When $\Omega =M\setminus K$ is connected and $K$ is a bounded $\mathcal C^{1,1}$ domain, $\Omega$ is automatically uniform and, by classical theory, $h$ vanishes linearly near the boundary of $\Omega$.  If, in addition,
$M$ is non-parabolic then $h(x) \approx  d(x,K)\wedge 1$ and, by simple computation, one recovers the estimates of \cite{Zglob}.   
This yields a different proof of the results in \cite{Zglob}, independent from the earlier results in \cite{FGS,GS3}. 

In addition,  \cite[Theorem 5.11]{GyS} allows for $K$ to be unbounded and non-smooth as long as the key hypothesis that $\Omega\setminus K$ is uniform (in fact, inner-uniform) remains. In fact, because it is stated in the setting of Dirichlet spaces,  \cite[Theorem 5.11]{GyS} allows for the treatment of mixed boundary condition. For instance, as in Example \ref{paraboloid_ex}, 
one can take $M=\mathbb R^d$, $K=\{x=(x_1,\dots, x_d): x_1^2+\dots x_{d-1}^2\le x_d\}$ (a paraboloid of revolution),
 and $\Omega=M\setminus K$. Moreover, one can impose mixed boundary condition 
 along  $\partial K=\partial \Omega$.   The technique of \cite{GyS} is to obtain estimates on $p_\Omega(t,x,y)$ via intermediate heat kernel estimates on a related heat kernel, the heat kernel $p_{\Omega,h^2}(t,x,y)$ of the weighted manifold $(\Omega, h^2)$ where $h$ is the harmonic profile of the domain $\Omega$. The key point is that when $\Omega$ is uniform, one can prove that the profile $h$ has good properties that imply $(\Omega, h^2)$ is a Harnack manifold. This implies that classical two-sided heat kernel bounds hold for $p_{\Omega,h^2}(t,x,y)$ (see Theorem  \ref{HarnackEnds} above). The estimates for $p_\Omega(t,x,y)$ then follow simply because $p_\Omega(t,x,y)=h(x)h(y)p_{\Omega,h^2}(t,x,y)$.  One important aspect of this approach  is that it resolves all at once the problems related to the global geometric structure of the manifold  $M$ and domain $\Omega$, and the local problems related to the presence of boundary conditions.
 
The strategy from \cite{GyS} explained above is implemented in this paper to prove our main result, Theorem \ref{main_thm}, using the function $h$ constructed in Theorem \ref{thm-profile}, and Theorem \ref{HK-ends-estimate} applied to the weighted manifold $(M,\sigma h^2)$.

\end{appendices}

\end{document}